\font\sans=cmss12
\font\tsc=cmcsc8 at 9truept
\def \hvC{\text{\fontfamily{phv}\selectfont{C}}}
\def \SaK{\text{\sans K}}
\def \ScptA{\mathscr{A}}
\def \ScptB{\mathscr{B}}
\def \CalH{\mathcal H}
\def \CalI{\mathcal I}
\def \CalJ{\mathcal J}
\def \CalR{\mathcal R}
\def \Bk{\mathscr{B}}
\def \Gk{\mathscr{G}}
\def \Lk{\mathscr{L}}
\def \Nk{\mathscr{N}}
\def \Pk{\mathscr{P}}
\def \Sk{\mathscr{S}}
\def \Tk{\mathscr{T}}
\def \Uk{\mathscr{U}}
\def \mB{\text{\rm{\bf{B}}}}
\def \mG{\text{\rm{\bf{G}}}}
\def \mL{\text{\rm{\bf{L}}}}
\def \mN{\text{\rm{\bf{N}}}}
\def \mS{\text{\rm{\bf{S}}}}
\def \mU{\text{\rm{\bf{U}}}}
\def \mField{k}
\def \bZ{{\mathbb Z}}
\def \bC{{\mathbb C}}
\def \bF{{\mathbb F}}
\def \bG{{\mathbb G}}
\def \bZ{{\mathbb Z}}
\def \fka{{\mathfrak a}}
\def \fkg{{\mathfrak g}}
\def \fkl{{\mathfrak l}}
\def \frcC{ {\text{\rm{\fontsize{9}{10}{\fontfamily{frc}\selectfont{C}}}}} }
\def \myauthor{Dan Barbasch and Allen Moy}
\def \Ad{\text{\rm{Ad}}}
\def \staralg{\star}
\def \Cic{{C^{\infty}_{c}}}
\def \End{\text{\rm{End}}}
\def \GL{\text{\rm{GL}}}
\def \Hom{\text{\rm{Hom}}}
\def \mygrad{\text{\rm{grad}}}
\def \myinf{\rho}
\def \myId{\text{\rm{I}}}
\def \myprod{\star}
\def \Lie{\text{\rm{Lie}}}
\def \meas{\text{\rm{meas}}}
\def \matrixskip{\begin{matrix}\, \\ \, \end{matrix}}
\def\myhecke#1#2{ {\hskip 0.005in}_{{\hskip 0.005in}_{#1}} {\mathcal H}_{{\hskip 0.005in}_{#2}}{\hskip 0.005in} }
\def\mydhecke#1#2#3{ {\hskip 0.005in}_{{\hskip 0.005in}_{#1}} {\mathcal H}^{#3}_{{\hskip 0.005in}_{#2}}{\hskip 0.005in} }
\def\mymatrix#1#2#3#4{ {#1}^{#2}_{ #3 , #4 } }
\def\theequation{\ifnum\value{subsection}>0\relax
\thesubsection.\arabic{equation}\relax
\else\ifnum\value{section}>0\relax
\thesection.\arabic{equation}\relax \else\arabic{equation}\fi\fi}
\newtheorem{thm}[equation]{Theorem}
\newtheorem{lemma}[equation]{Lemma}
\newtheorem{prop}[equation]{Proposition}
\newtheorem{cor}[equation]{Corollary}
\newtheorem{defn}[equation]{Definition}
\newcommand\ovl{\overline}
\begin{document}
\vskip 0.40in
\title{Peter--Weyl Iwahori algebras}
\author{\myauthor}
%%%%%%%%%%%%%%%%%%%%%%%%%%%%%%%%%%%%%%%%%%%%%%%%%%%%%%%%%%%%%%%%%%%%%%%%%
%%%%%%%%%%%%%%%%%%%%%%%%%%%%%%%%%%%%%%%%%%%%%%%%%%%%%%%%%%%%%%%%%%%%%%%%%
%%%%%%%%%%%%%%%%%%%%%%%%%%%%%%%%%%%%%%%%%%%%%%%%%%%%%%%%%%%%%%%%%%%%%%%%%

\thanks{The first author is partly supported by NSA grant H98230-16-1-0006.}

\thanks{The second author is partly supported by Hong Kong Research Grants Council grant CERG {\#}16301915.}

\subjclass{Primary  22E50, 22E35}

\keywords{convolution algebra, Iwahori--Hecke algebra, idempotent, Morita equivalence, parahoric subgroup, Peter--Weyl idempotent, $\hvC^{\staralg}$-algebra}

%%%%%%%%%%%%%%%%%%%%%%%%%%%%%%%%%%%%%%%%%%%%%%%%%%%%%%%%%%%%%%%%%%%%%%%%%
%%%%%%%%%%%%%%%%%%%%%%%%%%%%%%%%%%%%%%%%%%%%%%%%%%%%%%%%%%%%%%%%%%%%%%%%%
%%%%%%%%%%%%%%%%%%%%%%%%%%%%%%%%%%%%%%%%%%%%%%%%%%%%%%%%%%%%%%%%%%%%%%%%%

\begin{abstract}

The Peter--Weyl idempotent $e_{\Pk}$ of a parahoric subgroup ${\Pk}$ is the sum of the idempotents of irreducible representations of $\Pk$ which have a nonzero Iwahori fixed vector.   The convolution algebra associated to $e_{\Pk}$ is called a Peter--Weyl Iwahori algebra.  We show any Peter--Weyl Iwahori algebra is Morita equivalent to the Iwahori-Hecke algebera.  Both the Iwahori--Hecke algebra and a Peter--Weyl Iwahori algebra have a natural $\hvC^\staralg$-algebra structure, and the Morita equivalence preserves irreducible hermitian and unitary modules.  Both algebras have another anti-involution denoted as $\bullet$, and the Morita equivalence preserves irreducible and unitary modules for the $\bullet$-involution.   

\end{abstract}

%%%%%%%%%%%%%%%%%%%%%%%%%%%%%%%%%%%%%%%%%%%%%%%%%%%%%%%%%%%%%%%%%%%%%%%%%
%%%%%%%%%%%%%%%%%%%%%%%%%%%%%%%%%%%%%%%%%%%%%%%%%%%%%%%%%%%%%%%%%%%%%%%%%
%%%%%%%%%%%%%%%%%%%%%%%%%%%%%%%%%%%%%%%%%%%%%%%%%%%%%%%%%%%%%%%%%%%%%%%%%
\maketitle
%%%%%%%%%%%%%%%%%%%%%%%%%%%%%%%%%%%%%%%%%%%%%%%%%%%%%%%%%%%%%%%%%%%%%%%%%
%%%%%%%%%%%%%%%%%%%%%%%%%%%%%%%%%%%%%%%%%%%%%%%%%%%%%%%%%%%%%%%%%%%%%%%%%
%%%%%%%%%%%%%%%%%%%%%%%%%%%%%%%%%%%%%%%%%%%%%%%%%%%%%%%%%%%%%%%%%%%%%%%%%

\section{Introduction}

\bigskip

Let $\mField$ be a non-archimedean local field with ring of integers ${\CalR}_{\mField}$ and prime ideal $\wp_{\mField}$.  Suppose $\Gk = \mG (\mField )$ is the group of $\mField$-rational points of a split reductive group defined over $\mField$ (for convenience we also assume simple).  After the choice of a Haar measure on $\Gk$, the vector space $\Cic (\Gk )$ of locally constant compactly supported functions is a convolution algebra, and any smooth representation $(\pi, V_{\pi})$ of $\Gk$ integrates to a representation of $\Cic (\Gk )$.  The algebra $\Cic (\Gk )$ has the structure of a $\hvC^\staralg$-algebra with the $\staralg$-operation given by {\,}$f^{\staralg}(g) \, = \, {\overline{f(g^{-1})}}$.  The $\hvC^\staralg$-algebra $\Cic (\Gk )$ is used to transfer problems of analysis on the group $\Gk$ to algebraic problems. In particular, we are interested in the Bernstein component of (smooth irreducible) representations with nonzero Iwahori fixed vectors.   In this setting, we fix an Iwahori subgroup ${\CalI}$ and replace $C^{\infty}_{c}(\Gk)$ by the Iwahori-Hecke algebra ${\CalH} \, := \, {\CalH} (\Gk , {\CalI})$  of ${\CalI}-$bi-invariant locally constant compactly supported functions.  The Iwahori-Hecke algebra inherits a star operation from $\Cic (\Gk)$.  \

\begin{defn}
A smooth representation $(\pi,V_{\pi})$ of $\Gk$ admits an invariant
hermitian form if there is a nontrivial hermitian form $\langle\
,\ \rangle$ satisfying
$$
\langle\pi(x)v_1,\pi(x)v_2\rangle =\langle v_1,v_2\rangle.
$$    
For $f \in \Cic (\Gk)$, we get $\langle \pi(f)v_1 , v_2 \rangle = \langle v_1 , \pi(f^{\staralg})v_2\rangle$.  If $(\pi,V_{\pi})$ is irreducible, the form is unique  up to a nonzero scalar.  The representation $(\pi,V_{\pi})$ is said to be unitarizable if $V_{\pi}$ admits a positive definite invariant hermitian form. 
\end{defn}

The  $\staralg-$operation on the Iwahori--Hecke algebra can be given explicitly in terms of the generators and relations of ${\CalH}$ (see section 5 \cite{BM2}).  The algebra $\CalH$ has another anti--involution operation (see \cite{BC}) denoted by $\bullet$, that is only given in terms of the generators and relations.  One can also definite hermitian and unitary modules for the $\bullet$-involution.  

\medskip

The graded Hecke algebra possesses analogous $\staralg$ and $\bullet$
anti--involutions.   The $\staralg$ involution is defined in terms of
generators and relations in \cite{BM2}, while both involutions are
treated from a different point of view in \cite{Op}.

\medskip

Suppose ${\Pk}$ is a parahoric subgroup containing our chosen Iwahori subgroup ${\CalI}$.  Set $\Xi$ to be the set of irreducible representations of ${\Pk}$ which contain the trivial representation of ${\CalI}$.  We define the Peter--Weyl idempotent to be the idempotent 
$$
e_{\Pk} \ := \ \ {\frac{1}{\meas ({\Pk})}} \ \ \ {\underset {\sigma \in \Xi \, \ } \sum}  \ \deg (\sigma) \ \Theta_{\sigma} \ ,
$$
and we define the Peter-Weyl Iwahori algebra as 
$$
{\CalH}( \Gk , {e_{\Pk}}) \ := \ e_{\Pk} \ \myprod \ \Cic (\Gk ) \ \myprod \ e_{\Pk} \ \ .
$$
\noindent{When} ${\Pk}$ equals ${\CalI}$, the Peter--Weyl Iwahori algebra ${\CalH}( \Gk , {e_{\Pk}})$ is the Iwahori-Hecke algebra.  For any $\Pk \supset \CalI$, it is known (see Proposition \ref{key-prop}) $e_{\Pk} \myprod e_{\CalI} = e_{\CalI} = e_{\CalI} \myprod e_{\Pk}$; consequently ${\CalH}( \Gk , e_{\CalI} ) \subset {\CalH}( \Gk , e_{\Pk} )$.  

\medskip

The Peter-Weyl Iwahori algebra ${\CalH}( \Gk , {e_{\Pk}})$ inherits a $\hvC^\staralg$-structure from $\Cic (\Gk )$.   The problem we are concerned with in this paper, is to show:
\smallskip
\begin{itemize}
\item[(i)] \ Each Peter-Weyl Iwahori algebra is Morita equivalent to the 
Iwahori--Hecke algebra, and furthermore, the Morita equivalence
preserves $\staralg$-hermitian and unitary modules.  The equivalence
is established by showing the idempotent $e_{\CalI} \, \in \,
{\CalH}(\Gk, e_{\Pk})$ is a full idempotent, i.e., $e_{\Pk} \, \in \,
{\CalH}(\Gk, e_{\Pk}) \myprod e_{\CalI} \myprod {\CalH}(\Gk,
e_{\Pk})$.  In fact, $e_{\CalI}$ belongs to ${\CalH}(\Pk, e_{\Pk})$, and is already a full idempotent, i.e., $e_{\Pk} \, \in \, {\CalH}(\Pk, e_{\Pk}) \myprod e_{\CalI} \myprod {\CalH}(\Pk, e_{\Pk})$
\smallskip
\item[(ii)] \ Each Peter--Weyl Iwahori algebra possesses an anti--involution {\,}$\bullet$, which restricts to the $\bullet$ involution on the Iwahori-Hecke algebra, and the Morita equivalence preserves {\,}$\bullet$-hermitian and unitary modules.
\end{itemize}

\vskip 0.50in

%%%%%%%%%%%%%%%%%%%%%%%%%%%%%%%%%%%%%%%%%%%%%%%%%%%%%%%%%%%%%%%%%%%%%%%%%
%%%%%%%%%%%%%%%%%%%%%%%%%%%%%%%%%%%%%%%%%%%%%%%%%%%%%%%%%%%%%%%%%%%%%%%%%
%%%%%%%%%%%%%%%%%%%%%%%%%%%%%%%%%%%%%%%%%%%%%%%%%%%%%%%%%%%%%%%%%%%%%%%%%

\section{Preliminaries on the Iwahori--Hecke algebra}

\subsection{\ }

We introduce notation:
 
\begin{itemize}

\item[$\bullet$] \ \ $\mField \supset {\CalR}_{\mField} \supset \wp_{\mField}$, $\mG$, $\Gk = \mG (\mField )$ are as in the introduction.  Set $\bF = {\CalR}_{\mField}/\wp_{\mField}$, and let $q$ denote the order of $\bF$. 

\item[$\bullet$] \ \ For any $\mField$-subgroup $\mL \subset \mG$, let $\Lk = \mL (\mField )$ denote the group of $\mField$-rational points.  Denote by $\fkg$, and $\fkl$ the obvious Lie subalgebras of $(\Lie(\mG))(\mField)$.

\item[$\bullet$] \ \ Let $\mS \subset \mG$ denote a maximal split
  $\mField$-torus (which we can in fact assume defined over
  ${\CalR}$), and $\Sk = \mS (\mField )$; so, the characters $Y = \Hom
  ( \mS, \mG_{m})$ are  naturally paired with the cocharacters $X = \Hom (\mG_{m} , \mS)$.  Set $\Sk^{0} = \mS (\CalR )$, and denote by $\Sk^{+}$ the maximal pro-p-subgroup of $\Sk^{0}$.

\item[$\bullet$] \ \ $R$ is the set of roots of $\mG$ with respect to $\mS$.  To a root $\alpha$, we denote by $\mU_{\alpha} \subset \mG$ and $\Uk_{\alpha} = \mU_{\alpha}(\mField) \subset \Gk$, the corresponding root groups.  For a choice  $R^{+} \subset R$ of positive roots $R^{+}$, let $\mB$ (and $\Bk = \mB (\mField)$) be the associated Borel subgroups, and $\Pi \subset R^{+}$, the simple roots.  

The choice of a Chevalley basis of $\fkg$ allows us to define $\mG$ and $\mU_{\alpha}$ over the integers ${\CalR}_{\mField}$, and thus over $\bF$ too (we write $\mG \times_{{\CalR}_{{\mField}}} \bF$ for the group over $\bF$), so that  $\mG \times_{{\CalR}_{\mField}} \bF$ is a connected reductive split $\bF$-group, and there is a canonical identification of the root systems of $\mG$ and $\mG \times_{{\CalR}_{\mField}} \bF$.

\item[$\bullet$] \ \ Let $\ScptB$ denote the Bruhat-Tits building of $\Gk$.   The torus $\mS$ (defined over ${\CalR}_{\mField}$) yields an apartment $\ScptA \subset \ScptB$, and $\ScptB$ is the union of all its apartments. The Chevalley basis above allows us to:

\smallskip
\begin{itemize}
\item[(i)] embed $Y$ inside $\ScptA$, so that the origin $0$ becomes a hyperspecial point
\smallskip
\item[(ii)] define the set of affine roots $\Psi \, = \, \{ \ \alpha \, + \, j \ | \ \alpha \, \in \, R \ , \ j \, \in \, \bZ \ \}$ on $\ScptA$ and to each affine root $\psi$ an affine root groups $\Uk_{\psi} \subset \Uk_{\mygrad (\psi )}$.  
\end{itemize}

\smallskip

\noindent{The} assumption that $\mG$ is split simple means $\ScptB$ is a simplicial complex.  For any facet $E \subset \ScptB$, let $\Gk_{E}$ be the associated parahoric subgroup.  When $E \subset \ScptA$:
\end{itemize}
$$
\aligned
\Gk_{E} \ = \ &{\text{\rm{subgroup of {\,}$\Gk${\,} generated by {\,}$\Sk^{0}${\,} and ${\,}\Uk_{\psi}$}}} \ \ {\text{\rm{($\psi$ satisfying {\,}$\psi (x) \ge 0${\,} for all $x \in E$)}}} \\
\Gk^{+}_{E} \ = \ &{\text{\rm{subgroup of {\,}$\Gk${\,} generated by {\,}$\Sk^{+}${,} and ${\,}\Uk_{\psi}{\,}$}}} \ \ {\text{\rm{($\psi$ satisfying {\,}$\psi (x) > 0${\,} for all $x \in E$)}}}
\endaligned
$$

\begin{itemize}
\item[$\bullet$] \ Fix a Haar measure on $\Gk$, and therefore a convolution product {\,}$\myprod${\,} on $\Cic (\Gk)$.    For any open compact subgroup $J \subset \Gk$, let $1_{J}$ denote the characteristic function, and set

\begin{equation}\label{groupidem}
e_{J} \ := \ {\frac{1}{\meas ( J )}} \ 1_{J} \ .
\end{equation}

\noindent{When} a facet $E \subset \ScptB$ is of maximal dimension, i.e., $E$ is a chamber, the parahoric subgroup $\CalJ = \Gk_{E}$ is an Iwahori subgroup.  Set
\smallskip
$$
\aligned
{\hskip 0.50in}{\CalH}({\Gk},{\CalJ}) \ :&= \ e_{\CalJ} \, \myprod \, \Cic (\Gk ) \, \myprod \, e_{\CalJ} \quad {\text{\rm{(Iwahori--Hecke algebra with respect to {\,}${\CalJ}$).}}} 
\endaligned
$$
\smallskip
\noindent{We} recall any two chambers of $\ScptB$ belong to the same $\Gk$ orbit; so any two Iwahori subgroups are conjugate in $\Gk$.

\smallskip

The choices of a set of positive roots $R^{+}$ and a Chevalley basis singles out a particular Iwahori subgroup $\CalI$ which can be described as follows: \  To the facet $\{ \, 0 \, \} \subset \ScptA$, and its maximal parahoric subgroup $\Gk_{\{ 0 \}}$, we consider the quotient map $\Gk_{\{ 0 \}} \longrightarrow (\Gk_{\{ 0 \}} / \Gk^{+}_{\{ 0 \}}) = (\mG \times_{{\CalR}_{\mField}} \bF) (\bF)$.  Then, ${\CalI}$ is the inverse image of the Borel subgroup of $(\mG \times_{{\CalR}_{\mField}} \bF) (\bF)$ corresponding to the positive roots $R^{+}$.  

\smallskip

We recall that the Iwahori--Hecke algebra ${\CalH}(\Gk, \CalI )$ has a presentation in terms of the finite Iwahori--Hecke agebra ${\CalH}(\Gk_{\{ 0 \}} , \CalI )$ and $X$ (which can viewed as a rational functions on the torus $Y \otimes_{\bZ} \bC^{\times}$):

\smallskip

\begin{itemize}
\item[(i)] \ Let $\mN$ (defined over ${\CalR}_{\mField}$) be the normalizer of $\mS$.  For each $n \in \mN (\CalR_{\mField} ) \subset \Gk_{\{ 0 \}}$, set 
$$
T_{n} \ = \ {\frac{1}{\meas (\CalI )}} \ 1_{\CalI \, n \, \CalI }  \ \ \in \ \Cic(\Gk) \qquad  {\text{\rm{(depends only on the coset {\,}$n \, \Sk^{0}$).}}}
$$

\smallskip

If $n_1, \, n_2 \, \in \mN (\CalR_{\mField} )$ have the length property that $\ell(n_1 n_2) \, = \, \ell (n_1) \, + \, \ell (n_2)$, then $T_{n_1} \myprod T_{n_2} \, = \, T_{n_1 n_2}$.   

\medskip

For each simple root $\alpha$, let $t_{\alpha} \in \mN (\CalR_{\mField} )$ be an element whose action on $X$ is the reflection $s_{\alpha}$, and set 
\smallskip
$$
T_{s_\alpha} \ = \ {\frac{1}{\meas (\CalI )}} 1_{\CalI \, t_{\alpha} \, \CalI }  \ .
$$
\smallskip
\noindent{Then}, $T^{2}_{s_{\alpha}} \ = \ (q-1) \, T_{s_{\alpha}} \ + \ q \, \myId$.

\medskip

\item[(ii)] There is an embedding, due to Bernstein (see {\S}3 \cite{Lz1}, {\S}4 \cite{Lz2}), 
$$
\aligned X \ &\longrightarrow \ {\CalH} (\Gk , \CalI ) \\
x \ &\longrightarrow \ \quad \Theta_{x} 
\endaligned
$$
\noindent{satisfying}
$$
\Theta_{x} \ T_{s_{\alpha}} \ = \ T_{s_{\alpha}} \ \Theta_{s_{\alpha}(x)} \ + \ (q-1) \ {\frac{\Theta_{x} \ - \ \Theta_{s_{\alpha}(x)}}{1 \ - \ \Theta_{-\alpha}}} \ .
$$ 

\medskip

\item[(iii)] The set of elements $\{ \ \Theta_{x} \, T_{n} \ | \ x \in X \ , \ n \in  \mN (\CalR_{\mField} )/\Sk^{0} \ \}$ is a (complex) basis of ${\CalH} (\Gk , \CalI )$.
\end{itemize}

\medskip

\item[$\bullet$]  \ The space of functions {\,}$\Cic (\Gk )${\,}, admits a natural anti--involution $\staralg$ given by 
\begin{equation}\label{cstar}
  f^{\staralg}(g):=\overline{f(g^{-1})} \ .
\end{equation}
That $\CalI$ is a subgroup means the involution restricts to an anti--involution of {\,}${\CalH}(\Gk ,\CalI )$.
\smallskip
\item[$\bullet$] \ The algebra ${\CalH} (\Gk , \CalI )$ has another anti-involution $\bullet$ (see \cite{BC}) defined in terms of the generators $T_{n}$ ($n \in \mN (\CalR_{\mField} )$), and $\Theta_{x}$ ($x \in X$), given by
\begin{equation}\label{BCinvolution}
T^{\bullet}_{n} \ = \ T_{n^{-1}} \ \ {\text{\rm{for \ $n \in \mN (\CalR_{\mField} )$ \ \ \ and \ \ \ }}} \Theta^{\bullet}_{x} \ = \ \Theta_{x} \ \ {\text{\rm{for \ $x \in X$}}} \ .  
\end{equation}

\smallskip

\item[$\bullet$] \ Let $n_{0} \in \mN(\CalR_{\mField} )$ be a representative for the longest element in the Weyl group $\mN(\mField )/\Sk$.  There is an involution {\,}$\fka${\,} of the group $\Gk$ so that $\Sk$, $\Nk = \mN (\mField)$, and $\CalI$ are {\,}$\fka$-invariant, and:
$$
\aligned
{\fka}(x) \ &= \ n_{0} \, x^{-1} \, n^{-1}_{0} \qquad \forall \ \ x \, \in \ \Sk \\
{\fka}(n) \ &= \ n_{0} \, n \, n^{-1}_{0}  \mod \Sk \qquad \forall \ \ n \, \in \ \Nk \ .
\endaligned
$$
For the case of the group $\GL (n)$, and the standard representation realization of classical groups the involution $\fka$ is
$$
\fka \, ( \, g \, ) \ = \ n_{0} \, (g^{-1})^{\text{\rm{T}}} \, n^{-1}_{0} \ ,
$$
\noindent where ${\text{\rm{T}}}$ is transpose, and $n_{0} \in \mN (\CalR_{\mField} )$ is a monomial matrix representative of the longest Weyl element.

\medskip

The involution $\fka$, defines an involution of the Iwahori--Hecke algebra 
 ${\CalH} (\Gk , \CalI )$, and the relationship between the two anti-involutions $\staralg$ and $\bullet$ is:  
$$
\bullet \, (y) \ = \ T^{-1}_{n_{0}} \ \fka ( \, y^{\staralg} \, ) \ T_{n_{0}} \qquad \forall \ \ y \ \in \ {\CalH} (\Gk , \CalI ) \ .
$$

\end{itemize}

\vskip 0.30in

%%%%%%%%%%%%%%%%%%%%%%%%%%%%%%%%%%%%%%%%%%%%%%%%%%%%%%%%%%%%%%%%%%%%%%%%%
%%%%%%%%%%%%%%%%%%%%%%%%%%%%%%%%%%%%%%%%%%%%%%%%%%%%%%%%%%%%%%%%%%%%%%%%%
%%%%%%%%%%%%%%%%%%%%%%%%%%%%%%%%%%%%%%%%%%%%%%%%%%%%%%%%%%%%%%%%%%%%%%%%%
\section{Idempotents}

\smallskip

Let $\CalJ$ be an Iwahori subgroup.  The collection of pairs consisting of a minimal $\mField$-Levi subgroup of $\Gk$, i.e., a maximal split torus $\Tk$, and the unramified characters of $\Tk$, is the cuspidal data to parametrize a full subcategory $\Omega$ (Bernstein component) of the smooth representations (of $\Gk$).  Equivalently, $\Omega$ is the full subcategory of representations generated by their $\CalJ$ fixed vectors.  Furthermore, there is an essentially compact distribution $P_{\Omega}$ (representable by a locally $L^1$-function on the regular compact elements of $\Gk$), so that for any smooth representation $(\pi , V_{\pi})$, the endomorphism $\pi (P_{\Omega} ) \in \End_{\Gk}(V_{\pi})$ is an idempotent which projects to the $\Omega$ component of $V_{\pi}$. 
 
\medskip

Suppose $F \subset \ScptB$ is a chamber (so $\Gk_{F}$ is an Iwahori subgroup), and $E$ is a facet in $F$ (so $\Gk_{E} \supset \Gk_{F} \supset \Gk^{+}_{F} \supset \Gk^{+}_{E}$).  The group $\Gk_{F}/\Gk^{+}_{E}$ is a Borel subgroup of $\Gk_{E}/\Gk^{+}_{E}$.  Let $\myinf$ be the quotient map from $\Gk_{E}$ to $\Gk_{E} /\Gk^{+}_{E}$.  We define the Peter--Weyl idempotent associated to the facet {\,}$E${\,} as (see also \cite{BCM})
\begin{equation}\label{pw-idempotent}
e_{E} \ := \ {\frac{1}{\meas (\Gk_{E} )}} \ \Big( \ {\underset {\sigma \in \Xi}  \sum}  \ \deg (\sigma) \ \Theta_{\sigma} \, \circ \, \myinf \ \Big) \ , 
\end{equation}
\noindent{where} {\,}$\Xi${\,} is the collection of irreducible representations 
of $\Gk_{E}/\Gk^{+}_{E}$ which contain a nonzero (Borel) $\Gk_{F}/\Gk^{+}_{E}$-fixed vector, and $\Theta_{\sigma}$ is the character of $\sigma$.  We remark that: 

\smallskip

\begin{itemize} 
\item[(i)] \ If $F'$ is another chamber containing $E$, then $\Gk_{F'}/\Gk^{+}_{E}$ is a Borel subgroup of $\Gk_{E}/\Gk^{+}_{E}$ too, and the right side of the above definition yields the same idempotent $e_{E}$.
\smallskip
\item[(ii)]  \ For a chamber $F$, the idempotent $e_{F}$ equals $e_{\Gk_{F}}$. In this situation, we will use both notations.
\end{itemize} 

\medskip
\begin{defn}
The Peter--Weyl Iwahori algebra associated to the idempotent
{\,}$e_{E}$ is the algebra
\smallskip
\begin{equation}\label{pw-algebra}
{\CalH}_{E} \ := \ e_{E} {\,}\myprod{\,} \Cic (\Gk) {\,}\myprod{\,} e_{E} \ .
\end{equation}

\smallskip
\noindent{For} convenience, we sometimes abbreviate the name to Peter--Weyl algebra. When $F$ is a chamber, then ${\CalH}_{F}${\,} is the Iwahori--Hecke algebra {\,}${\CalH}(\Gk , \Gk_{F})$.  
  
\end{defn}

\medskip

\begin{prop}\label{idempotent-a} Let $P_{\Omega}$ be the projector for the unramified Bernstein component $\Omega$.  Suppose $E \subset \ScptB$ is a facet.  Then,  
$$
e_{E} \ = \ P_{\Omega} \ \myprod \ {\frac{1}{\meas (\Gk^{+}_{E} )}} \ 1_{\Gk^{+}_{E}} \ .
$$
\end{prop}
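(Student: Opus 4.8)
The plan is to prove the identity after evaluating both sides in an arbitrary smooth representation $(\pi,V)$ of $\Gk$, and then to invoke faithfulness of the action of $\Cic(\Gk)$ on the (smooth) left regular representation to deduce equality of functions. First I would observe that both sides lie in $\Cic(\Gk)$: the left side by definition, and the right side because $P_{\Omega}$ is essentially compact, so its convolution with the compactly supported locally constant function $\tfrac{1}{\meas(\Gk^{+}_{E})}1_{\Gk^{+}_{E}}$ is again in $\Cic(\Gk)$. Since $\Theta_{\sigma}\circ\myinf$ is $\Gk^{+}_{E}$-bi-invariant, one has $e_{E}=\tfrac{1}{\meas(\Gk^{+}_{E})}1_{\Gk^{+}_{E}}\myprod e_{E}=e_{E}\myprod \tfrac{1}{\meas(\Gk^{+}_{E})}1_{\Gk^{+}_{E}}$, so $\pi(e_{E})$ annihilates $\ker\pi(\tfrac{1}{\meas(\Gk^{+}_{E})}1_{\Gk^{+}_{E}})$ and maps $V$ into $V^{\Gk^{+}_{E}}$; the same holds for $\pi(P_{\Omega})\pi(\tfrac{1}{\meas(\Gk^{+}_{E})}1_{\Gk^{+}_{E}})$, because $\pi(P_{\Omega})$ is a $\Gk$-endomorphism of $V$ and hence preserves $V^{\Gk^{+}_{E}}$. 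Thus it suffices to compare the two operators on the semisimple $\Gk_{E}/\Gk^{+}_{E}$-module $V^{\Gk^{+}_{E}}$.

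On $V^{\Gk^{+}_{E}}$, using $\meas(\Gk_{E})=|\Gk_{E}/\Gk^{+}_{E}|\,\meas(\Gk^{+}_{E})$, the operator $\pi(e_{E})$ becomes the finite-group element $\sum_{\sigma\in\Xi}\tfrac{\deg\sigma}{|\Gk_{E}/\Gk^{+}_{E}|}\sum_{\bar g}\Theta_{\sigma}(\bar g)\,\bar\pi(\bar g)$, where $\bar\pi$ is the induced action of $\Gk_{E}/\Gk^{+}_{E}$. Each summand is a scalar multiple of the central idempotent attached to $\sigma^{*}$, so $\pi(e_{E})$ is the projection onto the $\Xi^{*}$-isotypic subspace; since $\Xi$ is exactly the set of constituents of the self-dual permutation module $\C[(\Gk_{E}/\Gk^{+}_{E})/(\Gk_{F}/\Gk^{+}_{E})]$, it is stable under contragredient, $\Xi^{*}=\Xi$, and $\pi(e_{E})$ is precisely the projection of $V^{\Gk^{+}_{E}}$ onto its $\Xi$-isotypic part. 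On the other hand $\pi(P_{\Omega})\pi(\tfrac{1}{\meas(\Gk^{+}_{E})}1_{\Gk^{+}_{E}})$ restricts on $V^{\Gk^{+}_{E}}$ to $\pi(P_{\Omega})$, namely the projection onto $(V_{\Omega})^{\Gk^{+}_{E}}$ along the remaining Bernstein components. Everything therefore reduces to one equality of $\Gk_{E}/\Gk^{+}_{E}$-submodules: the $\Xi$-isotypic part of $V^{\Gk^{+}_{E}}$ equals $(V_{\Omega})^{\Gk^{+}_{E}}$.

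One inclusion is straightforward. If $\sigma\in\Xi$ and $W\cong\sigma$ is an irreducible $\Gk_{E}/\Gk^{+}_{E}$-submodule of $V^{\Gk^{+}_{E}}$, then by definition of $\Xi$ the module $W$ contains a nonzero $\Gk_{F}/\Gk^{+}_{E}$-fixed vector $w$, that is a nonzero $\CalI$-fixed vector; irreducibility gives $W=\Gk_{E}\cdot w$, and since $w\in V^{\CalI}\subset V_{\Omega}$ with $V_{\Omega}$ being $\Gk$-stable, we get $W\subset V_{\Omega}$. Summing over all copies shows the $\Xi$-isotypic part lies in $(V_{\Omega})^{\Gk^{+}_{E}}$; in passing this re-proves $P_{\Omega}\myprod e_{E}=e_{E}=e_{E}\myprod P_{\Omega}$.

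The reverse inclusion is the heart of the matter and the step I expect to be the main obstacle: I must show that $(V_{\Omega})^{\Gk^{+}_{E}}$ involves only the types in $\Xi$, i.e. that every irreducible $\Gk_{E}/\Gk^{+}_{E}$-constituent of the $\Gk^{+}_{E}$-fixed vectors of a representation in the unramified component $\Omega$ possesses a $\Gk_{F}/\Gk^{+}_{E}$-fixed vector. Here I would appeal to depth-zero theory: a representation in $\Omega$ has cuspidal support given by unramified characters of the minimal torus, and the Moy--Prasad description of depth-zero blocks matches the $\Gk_{E}/\Gk^{+}_{E}$-types occurring on $\Gk^{+}_{E}$-fixed vectors with finite-group representations of the corresponding (here principal-series) cuspidal support, namely with the constituents $\Xi$ of $\C[(\Gk_{E}/\Gk^{+}_{E})/(\Gk_{F}/\Gk^{+}_{E})]$. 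Equivalently, since $V_{\Omega}$ is generated by $V^{\CalI}$, the claim is that the $\Gk_{E}/\Gk^{+}_{E}$-module $(V_{\Omega})^{\Gk^{+}_{E}}$ is generated by its $\Gk_{F}/\Gk^{+}_{E}$-fixed vectors, which forces all its types into $\Xi$; this is precisely the point where genuine input beyond the formal generation property is required. Granting it, the two $\Gk_{E}/\Gk^{+}_{E}$-stable images coincide, the two projections agree on $V^{\Gk^{+}_{E}}$ and both vanish off it, so $\pi(e_{E})=\pi(P_{\Omega})\pi(\tfrac{1}{\meas(\Gk^{+}_{E})}1_{\Gk^{+}_{E}})$ for every smooth $\pi$, and faithfulness yields $e_{E}=P_{\Omega}\myprod\tfrac{1}{\meas(\Gk^{+}_{E})}1_{\Gk^{+}_{E}}$.
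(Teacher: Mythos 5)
Your proof is correct, and its mathematical core coincides with the paper's: both arguments reduce the identity to a representation\-/by\-/representation comparison, identify $\pi(e_{E})$ and $\pi(P_{\Omega})\,\pi\bigl(\tfrac{1}{\meas(\Gk^{+}_{E})}1_{\Gk^{+}_{E}}\bigr)$ as projections onto the same subspace of $V^{\Gk^{+}_{E}}$, and hinge on exactly the depth-zero statement you flagged as the crux. The paper does not prove that statement either; it cites Theorem 5.2 of \cite{MP1} and Proposition 6.2 of \cite{MP2}, which say precisely what you ``granted'': if $\pi$ has a nonzero $\Gk_{F}$-fixed vector, then every irreducible $\Gk_{E}$-type occurring in $V^{\Gk^{+}_{E}}_{\pi}$ contains one as well. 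Where you genuinely diverge is the separation step. The paper proves Lemma~\ref{idempotent-b} via the Plancherel formula: two functions in $\Cic(\Gk)$ that agree under every \emph{irreducible} smooth representation are equal; this lets it test the identity only on irreducible $\pi$, where the dichotomy is clean ($\pi(P_{\Omega})$ is the scalar $1$ or $0$). You instead use the elementary faithfulness of $\Cic(\Gk)$ on the left regular representation, which costs nothing analytic but obliges you to verify the operator identity on \emph{arbitrary} smooth $\pi$; that is why you need the Bernstein decomposition of a general smooth $V$, the fact that components other than $\Omega$ have no $\CalI$-fixed vectors (implicit in your inclusion $V^{\CalI}\subset V_{\Omega}$), and an unstated reduction of the reverse inclusion from general objects of $\Omega$ to their irreducible subquotients --- all routine by exactness of $\Gk^{+}_{E}$-invariants, but steps the paper's Plancherel lemma sidesteps entirely. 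A small bonus of your route: you track the contragredient twist (the function $\deg(\sigma)\,\Theta_{\sigma}$ acts as the central idempotent of $\sigma^{*}$) and justify $\Xi^{*}=\Xi$ via self-duality of the permutation module $\C[(\Gk_{E}/\Gk^{+}_{E})/(\Gk_{F}/\Gk^{+}_{E})]$, a point the paper's appeal to ``the representation theory of finite groups'' leaves implicit.
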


\vskip 0.20in

To prove Proposition {\ref{idempotent-a}}, we first establish the following lemma.

\smallskip

\begin{lemma}\label{idempotent-b}
  If $\phi \, , \, \psi \ \in \ \Cic (\Gk )$ satisfy $\pi(\phi)=\pi(\psi)$ for all (irreducible smooth) $\pi,$ then $\phi \, = \, \psi.$
\end{lemma}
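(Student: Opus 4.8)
It suffices to prove the following reformulation: writing $f := \phi - \psi$, if $f \in \Cic(\Gk)$ satisfies $\pi(f) = 0$ for every irreducible smooth representation $\pi$, then $f = 0$. In other words, the goal is to show that the irreducible smooth representations separate the points of $\Cic(\Gk)$.

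First I would exploit the $\staralg$-structure already present on $\Cic(\Gk)$. The formula $\langle f_1, f_2\rangle = \int_{\Gk} f_1(g)\overline{f_2(g)}\,dg$ defines a positive-definite Hermitian form on $\Cic(\Gk)$ for which left convolution $\lambda(f)\xi := f \myprod \xi$ is a $\staralg$-representation, i.e. $\langle \lambda(f)\xi, \eta\rangle = \langle \xi, \lambda(f^{\staralg})\eta\rangle$. Since $\Cic(\Gk) \subset L^{1}(\Gk)$ and $\Gk$ is unimodular, the estimate $\|f \myprod \xi\|_{2} \le \|f\|_{1}\,\|\xi\|_{2}$ shows $\lambda(f)$ extends to a bounded operator on $L^{2}(\Gk)$. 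This representation is faithful: choosing an open compact subgroup $K$ with $f \myprod e_{K} = f$, one has $\lambda(f)e_{K} = f$, which is nonzero as soon as $f \neq 0$. Completing the image $\lambda(\Cic(\Gk))$ inside the bounded operators $B(L^{2}(\Gk))$ yields a $\hvC^{\staralg}$-algebra $A$ (the reduced $\hvC^{\staralg}$-algebra of $\Gk$) into which $\Cic(\Gk)$ embeds as a dense $\staralg$-subalgebra.

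The structural input I would then invoke is that a $\hvC^{\staralg}$-algebra has trivial Jacobson radical: by the Gelfand--Naimark theorem the pure states separate points, so for every nonzero $a \in A$ there is an irreducible $\staralg$-representation $(\rho, W)$ of $A$ with $\rho(a) \neq 0$. Taking $a = \lambda(f) \neq 0$ produces an irreducible (tempered) unitary representation of $\Gk$ on the Hilbert space $W$ in which $f$ acts nontrivially. Passing to the dense subspace $W^{\infty}$ of smooth vectors gives an irreducible smooth representation $(\pi, W^{\infty})$ of $\Gk$; because $f = e_{K} \myprod f \myprod e_{K}$, the operator $\pi(f)$ has range in the $K$-fixed (hence smooth) vectors and is already detected there, so $\pi(f) \neq 0$. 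As tempered irreducibles are in particular irreducible smooth representations, this contradicts the hypothesis and forces $f = 0$.

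The step I expect to be the main obstacle is this last bridge, from the analytic $\hvC^{\staralg}$-picture back to the algebraic statement about smooth representations: one must know that the topological irreducibility of $\rho$ yields an honest irreducible smooth representation of $\Gk$ on $W^{\infty}$. This uses the fact, special to reductive $p$-adic groups, that an irreducible unitary representation is admissible and that its smooth vectors then form an algebraically irreducible smooth module. A cleaner way to package the same point, which I would adopt if the analytic input feels too heavy, is to fix $K$ at the outset and use the exact functor $V \mapsto V^{K}$, which sets up a bijection between irreducible smooth representations with nonzero $K$-fixed vectors and simple modules over the Hecke algebra $\CalH(\Gk, K) = e_{K} \myprod \Cic(\Gk) \myprod e_{K}$; the whole statement then reduces to the semiprimitivity of $\CalH(\Gk, K)$, which follows from the same $\hvC^{\staralg}$-completion argument applied to this unital $\staralg$-algebra.
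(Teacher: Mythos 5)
Your argument is correct, but it takes a genuinely different route from the paper's. The paper's proof is a short Fourier-inversion argument: it invokes Harish-Chandra's Plancherel formula $F(1)=\int_{\widehat{\Gk}_{\text{temp}}}\mathrm{trace}\,(\pi(F))\,d\mu(\pi)$, applies it to the translates $\delta_x\myprod\phi$ and $\delta_x\myprod\psi$, and reads off $\phi(x^{-1})=\psi(x^{-1})$ for every $x\in\Gk$; in particular it uses the hypothesis only for the tempered representations appearing in the Plancherel decomposition. You instead separate points operator-theoretically: embed $\Cic(\Gk)$ into the reduced $\hvC^{\staralg}$-algebra via the (faithful) left regular representation, produce an irreducible $\staralg$-representation not killing $\lambda(f)$, where $f=\phi-\psi$, by Gelfand--Naimark theory, and then descend to smooth vectors, which requires Bernstein's theorem that irreducible unitary representations of a reductive $p$-adic group are admissible (so that topological irreducibility yields algebraic irreducibility of the smooth part). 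The two proofs thus trade one deep analytic input (the Plancherel theorem) for another (Gelfand--Naimark plus admissibility). Your final repackaging is in fact the strongest version: reducing to semiprimitivity of $\CalH(\Gk,K)$ through the standard bijection between simple $\CalH(\Gk,K)$-modules and irreducible smooth representations with nonzero $K$-fixed vectors avoids both Plancherel and admissibility, and proves the lemma for an arbitrary unimodular locally profinite group, not just a reductive one. One caution on that last step: semiprimitivity of $\CalH(\Gk,K)$ does not follow by literally re-running ``the same'' pure-state argument, because an irreducible $\staralg$-representation of the $\hvC^{\staralg}$-completion restricts only to a topologically irreducible, not necessarily algebraically simple, module over the dense subalgebra $\CalH(\Gk,K)$. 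The clean deduction from the $\hvC^{\staralg}$-embedding uses instead the spectral characterization of the Jacobson radical: if $h$ lies in the radical, so does $h^{\staralg}\myprod h$; every radical element has spectrum $\{0\}$, computed in the subalgebra and hence also in the completion; and the $\hvC^{\staralg}$-identity (norm equals spectral radius for self-adjoint elements) then forces $h^{\staralg}\myprod h=0$, hence $h=0$.
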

\begin{proof}
Let $d \mu$ denote the Plancherel measure on ${\widehat{\Gk}_{\text{temp}}}$.  By the Plancherel formula,

$$
\forall \ \ F \ \in C^{\infty}_{c} (\Gk) \quad : \quad F(1) \ = \ \int_{\widehat{\Gk}_{\text{temp}}} {\text{trace}} \, ( \, \pi (F) \, ) \ d \mu (\pi) \ .
$$

\noindent{For} $\phi \in C^{\infty}_{c}(\Gk)$ and $x \in \Gk$, set $F_{x}(\phi) \, = \, \delta_{x} \star \phi$, so $F_{x}(\phi) \, (g) \, = \, \phi (x^{-1}g)$.  If $\phi \, , \, \psi \in C^{\infty}_{c}(\Gk)$, and $\pi ( \phi ) = \ \pi (\psi )$ for all $\pi$, then, for all $x \in \Gk$, we have $\pi (F_{x}(\phi ) ) \, = \, \pi ( \delta_{x} \star \phi) \, = \, \pi ( \delta_{x} \star \psi) \, = \, \pi (F_{x}(\psi ) )$. Thus,

$$
\aligned
\phi(x^{-1})  \ = \ F_{x}(\phi) \, (1) \ &= \ \int_{\widehat{\Gk}_{\text{temp}}} {\text{trace}} \, ( \, \pi (F_{x}(\phi) \, ) \ d \mu (\pi) \ = \ \int_{\widehat{\Gk}_{\text{temp}}} {\text{trace}} \, ( \, \pi (F_{x}(\psi) \, ) \ d \mu (\pi) \\
&= \ F_{x}(\psi) \, (1) \ = \ \psi (x^{-1}) \ .
\endaligned
$$

\noindent{So,} $\phi \, = \, \psi$.

\end{proof}

\medskip

\noindent{We} note we can replace $\Gk$ by a compact group $J$, and the analogous result holds for any two $\phi \, , \, \psi \ \in \ \Cic (J )$

\bigskip

\noindent{\it{Proof of Proposition{\,}{\ref{idempotent-a}}}.} \ \ Suppose $(\pi , V_{\pi})$ is a smooth irreducible representation.   The operator  $\pi ( \, {P_{\Omega}} \, ) \, \in \, \End (V_{\pi})$ is the scalar 1 if $\pi$ has a nonzero Iwahori $\Gk_{F}$-fixed vector and the scalar 0 otherwise.  The operator $\pi ( \, P_{\Omega} \star e_{\Gk^{+}_{E}} \, )$ is projection to the subspace $V^{\Gk^{+}_{E}}_{\pi}$.   By Theorem 5.2 in \cite{MP1} and Proposition 6.2 in \cite{MP2}, if 
$\pi$ has a nonzero $\Gk_{F}$-vector, i.e., an unrefined depth zero minimal $\SaK$-type consisting of the trivial representation of $\Gk_{F}$, then any other irreducible representation of $\Gk_{E}$ in $V^{\Gk^{+}_{E}}_{\pi}$ must contain a nonzero $\Gk_{F}$-fixed vector.  Clearly (from the representation theory of finite groups) for any irreducible representation $\pi$, the operator $\pi (e_{E})$ is projection to
the subspace $V^{\Gk^{+}_{E}}_{\pi}$.  It follows $\pi ( \, P_{\Omega} \star
e_{\Gk^{+}_{E}} \, ) \, = \, \pi ( e_{E} )$, and so, by Lemma \ref{idempotent-b},
${P_{\Omega}} \star e_{\Gk^{+}_{E}} \, = \, e_{E}$.  

\hfill \qed

\vskip 0.30in

%%%%%%%%%%%%%%%%%%%%%%%%%%%%%%%%%%%%%%%%%%%%%%%%%%%%%%%%%%%%%%%%%%%%%%%%%%%%%%%
%%%%%%%%%%%%%%%%%%%%%%%%%%%%%%%%%%%%%%%%%%%%%%%%%%%%%%%%%%%%%%%%%%%%%%%%%%%%%%%
%%%%%%%%%%%%%%%%%%%%%%%%%%%%%%%%%%%%%%%%%%%%%%%%%%%%%%%%%%%%%%%%%%%%%%%%%%%%%%%
%%% TWO SIDED IDEAL

Suppose $E$ is a facet inside a chamber $F$.    We recall that $\Gk^{+}_{E} \subset \Gk^{+}_{F} \subset \Gk_{F} \subset \Gk_{E}$ and $\Gk^{+}_{E}$ is a normal subgroup of $\Gk_{E}$.  The idempotents $e_{\Gk^{+}_{E}}$, $e_{F}$ and $e_{E}$ belong to the finite dimensional algebra 
$$
{\CalH} \, := \, e_{\Gk^{+}_{E}} \myprod \Cic (\Gk_{E} ) \myprod e_{\Gk^{+}_{E}} \ \ . 
$$
\noindent This algebra is equal to the canonical embedding of $\Cic (\Gk_{E}/\Gk^{+}_{E})$ in $\Cic (\Gk_{E} )$ via the quotient map $\rho \, : \, \Gk_{E} \, \rightarrow \, \Gk_{E}/\Gk^{+}_{E}$.   It is a consequence of the normality of $\Gk^{+}_{E}$ in $\Gk_{E}$ that $\Cic (\Gk_{E}) {\,}{\myprod}{\,}  e_{F} {\,} = {\CalH} {\,}{\myprod}{\,}  e_{F} {\,}$ and $e_{F} {\,}{\myprod}{\,}  \Cic (\Gk_{E}) = e_{F} {\,}{\myprod}{\,} {\CalH}$.  If $\kappa$ is an irreducible representation of $\Gk_{E}$, with character $\Theta_{\kappa}$, set 
$$
e_{\kappa} := {\frac{1}{\meas (\Gk_{E} )}} \ \deg (\kappa) \ \Theta_{\kappa} 
$$

\smallskip

\noindent to be the (central) idempotent in $\Cic (\Gk_{E} )$ associated to $\kappa$.   Clearly, $e_{\kappa} {\,}\myprod {\,} \Cic (\Gk_{E}) {\,}{\myprod}{\,}  e_{F} {\,}  {\,}{\myprod}{\,}  \Cic (\Gk_{E})  {\,}\myprod {\,} e_{\kappa}$ is an ideal of $\Cic (\Gk_{E})$, which is either a minimal ideal or zero.  

\medskip

Define 

\begin{equation}\label{hecke-alg-finite}
\aligned
{\CalH}^{\text{\rm{fin}}}_{E} :&= e_{E} \myprod \Cic (\Gk_{E}) \ {\myprod} \ e_{E} \quad &&{,} {\hskip 0.45in} {{\CalH}^{\text{\rm{fin}}}_{F}} := e_{F} \myprod \Cic (\Gk_{E}) \ {\myprod} \ e_{F}  \\
{\mydhecke{E}{F}{\text{\rm{fin}}}} \ :&= \ e_{E} \ {\myprod} \Cic (\Gk_{E}) \ {\myprod} \ e_{F} \quad &&{,} \qquad {\mydhecke{F}{E}{\text{\rm{fin}}}} := e_{F} \ {\myprod} \ \Cic (\Gk_{E}) \ {\myprod} \ e_{E} \\
\endaligned
\end{equation}  

\medskip

\begin{prop}\label{key-prop} \ \ The (finite dimensional) vector space  
$\Cic (\Gk_{E}) {\,}{\myprod}{\,} e_{F} {\,}{\myprod}{\,}  \Cic (\Gk_{E}) $
is a bi-module for $\Cic (\Gk_{E} )$, i.e., an ideal of $\Cic (\Gk_{E} )$, and

\smallskip

\begin{itemize}

\item[(i)] \ It equals ${\CalH}^{\text{\rm{fin}}}_{E}$.

\smallskip

\item[(ii)] \ It is the span of matrix coefficients of the representations with $\Gk_{F}$-fixed vectors. 

\smallskip

\item[(iii)] \ $e_{E} \ \in \  \Cic (\Gk_{E}) {\,}{\myprod}{\,} e_{F} {\,}{\myprod}{\,} \Cic (\Gk_{E})$, i.e., $e_{F} \in \Cic (\Gk_{E}) {\,}{\myprod}{\,} e_{\Gk_{F}} {\,}{\myprod}{\,} \Cic (\Gk_{E})$ is a full idempotent.

\smallskip

\item[(iv)] \ $e_{E} {\,}{\myprod}{\,}  e_{F} \ = \ e_{F} {\,}{\myprod}{\,} e_{E} \ = \ e_{F}$.

  \smallskip

\item[(v)] \  ${\CalH}^{\text{\rm{fin}}}_{E} \ = \ {\mydhecke{E}{F}{\text{\rm{fin}}}} {\,}\myprod{\,} {\mydhecke{F}{E}{\text{\rm{fin}}}}$, and \ ${\CalH}^{\text{\rm{fin}}}_{F} \ = \ {\mydhecke{F}{E}{\text{\rm{fin}}}} {\,}\myprod{\,} {\mydhecke{E}{F}{\text{\rm{fin}}}}$.

\end{itemize}
\end{prop}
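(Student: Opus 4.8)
The plan is to push the whole statement down to the finite group $\SaG := \Gk_{E}/\Gk^{+}_{E}$ and reduce it to elementary semisimple--algebra theory. As recalled just before the proposition, the quotient map $\rho\colon\Gk_{E}\to\SaG$ identifies the finite-dimensional algebra $\CalH = e_{\Gk^{+}_{E}}\myprod\Cic(\Gk_{E})\myprod e_{\Gk^{+}_{E}}$ with the group algebra $\Cic(\SaG)$; under this identification $e_{\Gk^{+}_{E}}$ is the unit, $e_{F}$ becomes the idempotent $e_{\Gk_{F}/\Gk^{+}_{E}}$ projecting onto vectors fixed by the Borel subgroup $\Gk_{F}/\Gk^{+}_{E}$, and $e_{E}=\sum_{\sigma\in\Xi}e_{\sigma}$ becomes the sum of the central idempotents of the irreducible representations $\sigma\in\Xi$. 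Writing $A:=\CalH$, $\epsilon:=e_{F}$, $\eta:=e_{E}$, I would invoke the Wedderburn decomposition $A=\bigoplus_{\kappa}\End(V_{\kappa})$ over the irreducible representations $\kappa$ of $\SaG$: in the block $\End(V_{\kappa})$ the idempotent $\epsilon$ is the projection $P_{\kappa}$ onto $V_{\kappa}^{\,\Gk_{F}/\Gk^{+}_{E}}$, while $\eta$ is the identity on the blocks with $\kappa\in\Xi$ and is $0$ on the rest; by the very definition of $\Xi$, these are exactly the blocks with $P_{\kappa}\neq 0$.

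First I would clear away the bookkeeping between the two algebras. Since $\Cic(\Gk_{E})$ is unital, $\Cic(\Gk_{E})\myprod e_{F}\myprod\Cic(\Gk_{E})$ is automatically a two-sided ideal, which settles the framing sentence. Because $e_{E},e_{F}\in\CalH$ and $e_{\Gk^{+}_{E}}$ is the unit of $\CalH$, one has $e_{X}=e_{\Gk^{+}_{E}}\myprod e_{X}=e_{X}\myprod e_{\Gk^{+}_{E}}$, whence $e_{X}\myprod\Cic(\Gk_{E})\myprod e_{Y}=e_{X}\myprod\CalH\myprod e_{Y}$ for $X,Y\in\{E,F\}$, so all four spaces of \eqref{hecke-alg-finite} live inside $A$. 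Finally the normality facts recalled before the proposition give $\Cic(\Gk_{E})\myprod e_{F}\myprod\Cic(\Gk_{E})=\CalH\myprod e_{F}\myprod\CalH=A\epsilon A$. Thus (i)--(v) become identities among $A$, $\epsilon$, $\eta$.

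Then I would run the block computations. For (iv): $e_{\sigma}P_{\kappa}=\delta_{\sigma\kappa}P_{\kappa}$ and $P_{\kappa}=0$ unless $\kappa\in\Xi$, so $\eta\epsilon=\sum_{\sigma\in\Xi}P_{\sigma}=\sum_{\kappa}P_{\kappa}=\epsilon$, and symmetrically $\epsilon\eta=\epsilon$. For (ii): the matrix coefficients of a single $\kappa$ constitute the block $\End(V_{\kappa})$, so the span of matrix coefficients of the representations in $\Xi$ (equivalently, the $\SaG$-representations with $\Gk_{F}$-fixed vectors) is $\bigoplus_{\sigma\in\Xi}\End(V_{\sigma})$. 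For (i): each block is a simple matrix algebra, so the two-sided ideal it generates from the nonzero idempotent $P_{\sigma}$ is all of $\End(V_{\sigma})$ for $\sigma\in\Xi$ and is $0$ otherwise; hence $A\epsilon A=\bigoplus_{\sigma\in\Xi}\End(V_{\sigma})=\eta A\eta={\CalH}^{\text{\rm{fin}}}_{E}$, which is also the span found in (ii). Item (iii) is then immediate, since $\eta=\eta\myprod\eta\myprod\eta\in\eta A\eta=A\epsilon A$, i.e. $e_{E}\in\Cic(\Gk_{E})\myprod e_{F}\myprod\Cic(\Gk_{E})$ and $e_{F}$ is full. For (v), $(e_{E}\myprod\Cic(\Gk_{E})\myprod e_{F})\myprod(e_{F}\myprod\Cic(\Gk_{E})\myprod e_{E})=\eta A\epsilon A\eta=\eta(\eta A\eta)\eta=\eta A\eta={\CalH}^{\text{\rm{fin}}}_{E}$ using (i), while $(e_{F}\myprod\Cic(\Gk_{E})\myprod e_{E})\myprod(e_{E}\myprod\Cic(\Gk_{E})\myprod e_{F})=\epsilon A\eta A\epsilon=\epsilon(\eta A\eta)\epsilon=\epsilon A\epsilon={\CalH}^{\text{\rm{fin}}}_{F}$ using that $\eta$ is central (so $A\eta A=\eta A\eta$) together with $\epsilon\eta=\eta\epsilon=\epsilon$.

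There is no deep obstacle once this reduction is in place; the only delicate point is the translation dictionary — checking that under $\rho$ the idempotent $e_{F}$ really is the Borel idempotent $e_{\Gk_{F}/\Gk^{+}_{E}}$ and that the set $\Xi$ for $\SaG$ coincides with the indexing set used to define $e_{E}$ — and keeping straight which products are formed in $\Cic(\Gk_{E})$ and which already lie in the subalgebra $A=\CalH$. The normality of $\Gk^{+}_{E}$ in $\Gk_{E}$ is exactly what makes both clean, and it is the hypothesis I would lean on most.
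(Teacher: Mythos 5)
Your proof is correct and is, in substance, the paper's own argument: the paper's minimal ideals $e_{\kappa}\myprod\Cic(\Gk_{E})\myprod e_{\kappa}$ are exactly your Wedderburn blocks $\End(V_{\kappa})$, its key step (that $e_{\kappa}\myprod\Cic(\Gk_{E})\myprod e_{F}\myprod\Cic(\Gk_{E})\myprod e_{\kappa}$ is nonzero, hence the whole minimal ideal, precisely when $\kappa(e_{F})\neq 0$, i.e.\ when $\kappa\in\Xi$) is your simplicity-of-blocks computation, and its use of Lemma \ref{idempotent-b} for statement (iv) is the same injectivity that justifies your blockwise verification of $\eta\epsilon=\epsilon\eta=\epsilon$. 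The only slip is immaterial: $\Cic(\Gk_{E})$ is \emph{not} unital (the unit would be the Dirac delta at the identity, which is not locally constant; the algebra is merely idempotented), but the ideal property you invoked unitality for holds trivially since $\Cic(\Gk_{E})\myprod\bigl(\Cic(\Gk_{E})\myprod e_{F}\myprod\Cic(\Gk_{E})\bigr)\subset\Cic(\Gk_{E})\myprod e_{F}\myprod\Cic(\Gk_{E})$, and all your substantive computations take place in the genuinely unital subalgebra $\CalH\cong\Cic(\Gk_{E}/\Gk^{+}_{E})$.
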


\begin{proof}  \ \  To prove statement (i), suppose $h_1 {\,}{\myprod}{\,}  e_{\Gk_{F}} {\,}{\myprod}{\,}  h_2 \, \in \, \Cic (\Gk_{E}) {\,}{\myprod}{\,}  e_{F} {\,}{\myprod}{\,}  \Cic( \Gk_{E} )$.  For any representation $\kappa$  of $\Gk_{E}$, we have $\kappa ( h_1 {\,}{\myprod}{\,} e_{F} {\,}{\myprod}{\,} h_2 ) \, = \, \kappa (h_1) \,  \kappa (e_{F}) \, \kappa (h_2)$.  When $\kappa$ is irreducible, we deduce that 
$e_{\kappa} \myprod \Cic (\Gk_{E}) {\,}{\myprod}{\,}  e_{F} {\,}{\myprod}{\,}  \Cic( \Gk_{E} ) \myprod e_{\kappa}$ is zero if and only if $ \kappa (e_{F})$ is zero. 

\smallskip

Suppose $(\sigma, V_{\sigma})$ is an irreducible representation of $\Gk_{E}/\Gk^{+}_{E}$ which contains a nonzero $\Gk_{F}/\Gk^{+}_{E}$-fixed vector, i.e., $\sigma \in \Xi$.  Set $\kappa = \sigma \circ \rho$.   Then, $\kappa (e_{F})$ is nonzero which means $e_{\kappa} \myprod \Cic (\Gk_{E} ) {\,}{\myprod}{\,}  e_{F} {\,}{\myprod}{\,} \Cic (\Gk_{E}) \myprod e_{\kappa}$ is a nonzero ideal of $\Cic (\Gk_{E} )$ which is contained in the minimal ideal  $e_{\kappa} \myprod \Cic (\Gk_{E}) \myprod e_{\kappa}$.  Therefore, 
$$
e_{\kappa} \myprod \Cic (\Gk_{E} ) {\,}{\myprod}{\,}  e_{F} {\,}{\myprod}{\,} \Cic (\Gk_{E}) \myprod e_{\kappa} \ = \ e_{\kappa} \myprod \Cic (\Gk_{E}) \myprod e_{\kappa}
$$

\noindent Since $e_{E} = {\underset {\sigma \in \Xi} \sum } e_{\sigma \circ \rho}$.  We deduce statement (i).

\medskip

For the sake of completeness, we consider when  $(\kappa, V_{\kappa})$ is an irreducible representation of $\Gk_{E}$ which does not contain a nonzero $\Gk_{F}$-fixed vector, i.e., $\kappa (e_{F})$ is zero.   Then, $\kappa (h_1 {\,}{\myprod}{\,}  e_{F} {\,}{\myprod}{\,}  h_2) = 0$.  Consequently, $\kappa (F) \, = \, 0$ for any $F \, \in \, \Cic (\Gk_{E}) \myprod e_{F} \myprod \Cic (\Gk_{E})$.  This means $e_{\kappa} \myprod \Cic (\Gk_{E} ) {\,}{\myprod}{\,}  e_{F} {\,}{\myprod}{\,} \Cic (\Gk_{E}) \myprod e_{\kappa}$ is zero.

\medskip

Statements (ii) and (iii) follow from statement (i).

\medskip

For statement (iv), the equality $e_{E} {\,}{\myprod}{\,}  e_{F} \, = \, e_{F} {\,}{\myprod}{\,} e_{E}$ follows from the fact that $e_{E}$ is a central element of $\Cic (\Gk_{E} )$.  The equality $e_{F} {\,}{\myprod}{\,} e_{E} \, = \, e_{F}$ follows from the fact that  $\kappa(e_{E} {\,}{\myprod}{\,}  e_{\Gk_{F}}) \, = \, \kappa( e_{\Gk_{F}})$ for any irreducible representation $\kappa$ of $\Gk_{E}$, and  Lemma \ref{idempotent-b}.

\medskip

Statement (v) is a consequence of statement (i).
\end{proof}

\bigskip

Suppose $F$ is a chamber in the building, so the algebra ${{\CalH}_{F}} := e_{F} \myprod \Cic (\Gk) \ {\myprod} \ e_{F}$ is an Iwahori Hecke algebra.  If $E$ is a facet of $F$, we have previously named the algebra ${\CalH}_{E} := e_{E} \myprod \Cic (\Gk) \ {\myprod} \ e_{E}$ (which contains ${\CalH}_{F}$) the Peter-Weyl Iwahori algebra (associated to $E$).

\begin{prop}\label{hecke-modules} \ Suppose $E$ is a facet inside a chamber $F$. Then,

\begin{itemize}
\item[(i)] \ \ The idempotent {\,}$e_{F} \, \in \, {\CalH}_{E}${\,} satisfies ${\CalH}_{E} \myprod e_{F} \myprod {\CalH}_{E} \, = \, {\CalH}_{E}$, i.e., it is a full idempotent.

\smallskip

\item[(ii)] \ \ Define
\begin{equation}\label{hecke-alg-pw}
\aligned
{\myhecke{E}{F}} \ :&= \ e_{E} \ {\myprod} \Cic (\Gk) \ {\myprod} \ e_{F} \quad &&{\text{\rm{and}}} \qquad {\myhecke{F}{E}} \ := \ e_{F} \ {\myprod} \ \Cic (\Gk) \ {\myprod} \ e_{E} \\
\endaligned
\end{equation}

\noindent Then,   
$$
{\CalH}_{E} \ = \ {\myhecke{E}{F}} \ {\myprod} \ {\myhecke{F}{E}} \qquad {\text{\rm{and}}} \qquad {\CalH}_{F} \ = \ {\myhecke{F}{E}} \ {\myprod} \ {\myhecke{E}{F}} \ .
$$
\end{itemize}  
\end{prop}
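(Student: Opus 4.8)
The plan is to read Proposition~\ref{hecke-modules} as the global analogue---over all of $\Gk$---of the finite-dimensional Proposition~\ref{key-prop}, which is its counterpart over the compact group $\Gk_{E}$, and to bootstrap from the latter. The one fact that powers the transfer is that the witness making $e_{F}$ a full idempotent already lives inside $\Cic(\Gk_{E})$: Proposition~\ref{key-prop}(iii) gives $e_{E} \in \Cic(\Gk_{E}) \myprod e_{F} \myprod \Cic(\Gk_{E})$, and since $\Cic(\Gk_{E}) \subset \Cic(\Gk)$ the very same expression is available inside ${\CalH}_{E}$. Throughout I will use the idempotent relations $e_{E}^{2}=e_{E}$, $e_{F}^{2}=e_{F}$ and $e_{E} \myprod e_{F} = e_{F} \myprod e_{E} = e_{F}$ of Proposition~\ref{key-prop}(iv), together with the centrality of $e_{E}$ in $\Cic(\Gk_{E})$.

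First I would prove (i). The inclusion ${\CalH}_{E} \myprod e_{F} \myprod {\CalH}_{E} \subseteq {\CalH}_{E}$ is immediate, since $e_{F} = e_{E} \myprod e_{F} \myprod e_{E} \in {\CalH}_{E}$ and ${\CalH}_{E}$ is an algebra with unit $e_{E}$. For the reverse inclusion I would take an expression $e_{E} = \sum_{i} a_{i} \myprod e_{F} \myprod b_{i}$ with $a_{i}, b_{i} \in \Cic(\Gk_{E})$ supplied by Proposition~\ref{key-prop}(iii), multiply it by the central idempotent $e_{E}$ on both sides, and use $e_{E} \myprod e_{F} = e_{F} \myprod e_{E} = e_{F}$ to rewrite it as $e_{E} = \sum_{i} \alpha_{i} \myprod e_{F} \myprod \beta_{i}$ with $\alpha_{i} = e_{E} \myprod a_{i} \myprod e_{E}$ and $\beta_{i} = e_{E} \myprod b_{i} \myprod e_{E}$ now lying in ${\CalH}^{\text{\rm{fin}}}_{E} \subset {\CalH}_{E}$. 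Then every $x \in {\CalH}_{E}$ satisfies $x = e_{E} \myprod x = \sum_{i} \alpha_{i} \myprod e_{F} \myprod (\beta_{i} \myprod x) \in {\CalH}_{E} \myprod e_{F} \myprod {\CalH}_{E}$, which gives the reverse inclusion and hence (i).

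For (ii) I would first record the two elementary identities ${\CalH}_{E} \myprod e_{F} = {\myhecke{E}{F}}$ and $e_{F} \myprod {\CalH}_{E} = {\myhecke{F}{E}}$, each a one-line consequence of $e_{E}\myprod e_{F} = e_{F} \myprod e_{E} = e_{F}$ and $e_{E}^{2}=e_{E}$. The first asserted identity then falls out of (i): using $e_{F} = e_{F} \myprod e_{F}$,
$$
{\CalH}_{E} \ = \ {\CalH}_{E} \myprod e_{F} \myprod {\CalH}_{E} \ = \ ({\CalH}_{E} \myprod e_{F}) \myprod (e_{F} \myprod {\CalH}_{E}) \ = \ {\myhecke{E}{F}} \myprod {\myhecke{F}{E}} \ .
$$
For the second identity, the inclusion ${\myhecke{F}{E}} \myprod {\myhecke{E}{F}} \subseteq {\CalH}_{F}$ is immediate because the middle factor $e_{E} \myprod \Cic(\Gk) \myprod e_{E}$ sits inside $\Cic(\Gk)$. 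The reverse inclusion I would get from the explicit factorization
$$
e_{F} \myprod f \myprod e_{F} \ = \ (e_{F} \myprod f \myprod e_{E}) \myprod (e_{E} \myprod e_{F} \myprod e_{F}) \ ,
$$
whose first factor lies in ${\myhecke{F}{E}}$ and whose second factor equals $e_{F} \in {\myhecke{E}{F}}$; running over $f \in \Cic(\Gk)$ yields ${\CalH}_{F} \subseteq {\myhecke{F}{E}} \myprod {\myhecke{E}{F}}$.

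The content is concentrated in (i), and even there the only genuine step is the transfer of the finite witness for $e_{F}$; once $e_{E}$ has been written as $\sum_{i} \alpha_{i} \myprod e_{F} \myprod \beta_{i}$ with coefficients trimmed into ${\CalH}_{E}$, both parts reduce to formal manipulation of the three idempotents. I therefore expect no real obstacle beyond the bookkeeping of trimming $a_{i}, b_{i}$ by $e_{E}$; the conceptual weight is carried entirely by Proposition~\ref{key-prop}, which guarantees that $e_{F}$ generates $e_{E}$ as a two-sided ideal already at the compact level.
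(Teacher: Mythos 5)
Your proof is correct and follows essentially the same route as the paper: both arguments transfer the finite-level fullness of $e_{F}$ from Proposition \ref{key-prop} into ${\CalH}_{E}$ by exploiting that the witness lies in ${\CalH}^{\text{\rm{fin}}}_{E}$ and that $e_{E}$ is the unit of ${\CalH}_{E}$, and then deduce (ii) formally from (i) via ${\myhecke{E}{F}} = {\CalH}_{E} \myprod e_{F}$ and ${\myhecke{F}{E}} = e_{F} \myprod {\CalH}_{E}$. Your element-wise trimming of the $a_{i}, b_{i}$ by $e_{E}$ is just an explicit unpacking of the paper's set-level containment argument.
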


\begin{proof} \ \ The proof of statement (i) is based on the analagous
  fact in Proposition \ref{key-prop} for the finite dimensional
  algebra ${\CalH}^{\text{\rm{fin}}}_{E}$.   We have
  ${\CalH}^{\text{\rm{fin}}}_{E} \, = \, {\CalH}^{\text{\rm{fin}}}_{E}
  {\,}\myprod{\,} e_{F} {\,}\myprod{\,}
  {\CalH}^{\text{\rm{fin}}}_{E}$.
Since ${\CalH}^{\text{\rm{fin}}}_{E}$ contains the identity element $e_{E}$ of ${\CalH}_{E}$, we have {\,}${\CalH}_{E} {\,}\myprod{\,}{\CalH}^{\text{\rm{fin}}}_{E} \, = \, {\CalH}_{E} \, = \,
{\CalH}^{\text{\rm{fin}}}_{E} {\,}\myprod{\,}{\CalH}_{E}$.

\noindent Therefore, 
$$
{\CalH}_{E}  {\,}\myprod{\,} e_{F} {\,}\myprod{\,}  {\CalH}_{E} \ = \ 
{\CalH}_{E}  {\,}\myprod{\,} {\CalH}^{\text{\rm{fin}}}_{E} {\,}\myprod{\,} e_{F} {\,}\myprod{\,}  {\CalH}^{\text{\rm{fin}}}_{E} {\,}\myprod{\,} {\CalH}_{E} \ \supset \ {\CalH}_{E}  {\,}\myprod{\,} e_{E} {\,}\myprod{\,} {\CalH}_{E} \ = \ {\CalH}_{E} \ ,
$$
\noindent and so $e_{F}$ is a full idempotent of ${\CalH}_{E}$ too.

\medskip

Statement (ii) can be obtained from statement (i) as follows:
  
$$
\aligned
    {\CalH}_{E} \ &= \ {\CalH}_{E} {\,}\myprod{\,} e_{F} {\,}\myprod{\,} {\CalH}_{E}  \ = \ {\CalH}_{E} {\,}\myprod{\,} e_{F} {\,}\myprod{\,} e_{F} {\,}\myprod{\,} {\CalH}_{E} \\
    &= \ {\myhecke{E}{F}} {\,}\myprod{\,} {\myhecke{E}{F}} \qquad {\text{\rm{(since ${\myhecke{E}{F}} \, = \, {\CalH}_{E} {\,}\myprod{\,} e_{F}$ and ${\myhecke{F}{E}} \, = \, e_{F}{\,}\myprod{\,} {\CalH}_{E}$).}}}
\endaligned    
$$

\smallskip

\noindent Similarly, ${\CalH}_{F} \ = \ e_{F} {\,}\myprod{\,} {\CalH}_{E} {\,}\myprod{\,} e_{F}  \ = \  e_{F} {\,}\myprod{\,} {\CalH}_{E} {\,}\myprod{\,} {\CalH}_{E} {\,}\myprod{\,} e_{F} \ = \ {\myhecke{F}{E}} {\,}\myprod{\,}  {\myhecke{E}{F}}$

\end{proof}

\vskip 0.30in

\begin{prop}\label{module-generators} \ Suppose $F$ is a chamber of $\ScptB$, and $E$ is a facet contained in $F$. 
\smallskip
\begin{itemize}
\item[(i)] \ The left ${\CalH}_{E}$-module ${\myhecke{E}{F}}$ is cyclic with generator $e_{F}$.  Similarly, the right ${\CalH}_{E}$-module ${\myhecke{F}{E}}$ is cyclic  with generator $e_{F}$
\smallskip
\item[(ii)] \ The left  $\CalH_{F}$-module  ${\myhecke{F}{E}}$  is finitely generated. Similarly, the right $\CalH_{F}$-module ${\myhecke{E}{F}}$  is finitely generated.
\end{itemize}
\end{prop}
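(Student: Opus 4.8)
The plan is to obtain statement (i) as a direct formal consequence of the idempotent identities $e_E \myprod e_F = e_F \myprod e_E = e_F$ recorded in Proposition \ref{key-prop}(iv), and to obtain statement (ii) from the full-idempotent property $e_E \in \Cic(\Gk_E) \myprod e_F \myprod \Cic(\Gk_E)$ of Proposition \ref{key-prop}(iii). All of (i) and the bulk of (ii) are then bookkeeping with the two relations $e_E \myprod e_E = e_E$ and $e_F \myprod e_F = e_F$.

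For statement (i), I first observe that $e_E$ is the two-sided identity of ${\CalH}_E$, so the cyclic left ${\CalH}_E$-module generated by $e_F$ is exactly ${\CalH}_E \myprod e_F$. Using $e_E \myprod e_F = e_F$ I would compute
$$
{\CalH}_E \myprod e_F \ = \ e_E \myprod \Cic(\Gk) \myprod e_E \myprod e_F \ = \ e_E \myprod \Cic(\Gk) \myprod e_F \ = \ \myhecke{E}{F} \ .
$$
Since $e_F = e_E \myprod e_F \myprod e_F \in \myhecke{E}{F}$, the module $\myhecke{E}{F}$ is cyclic with generator $e_F$. The right-module assertion is symmetric: $e_F \myprod e_E = e_F$ gives $e_F \myprod {\CalH}_E = \myhecke{F}{E}$, and $e_F = e_F \myprod e_F \myprod e_E \in \myhecke{F}{E}$.

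For statement (ii), the main step is to turn the finite-dimensional membership of Proposition \ref{key-prop}(iii) into an explicit finite generating set. Choose finitely many $a_1,\dots,a_r,b_1,\dots,b_r \in \Cic(\Gk_E)$ with $e_E = \sum_{k=1}^{r} a_k \myprod e_F \myprod b_k$. For an arbitrary $\phi \in \Cic(\Gk)$, writing $e_F \myprod \phi \myprod e_E = e_F \myprod \phi \myprod e_E \myprod e_E$, substituting this expression for the first $e_E$, and inserting $e_F = e_F \myprod e_F$, I get
$$
e_F \myprod \phi \myprod e_E \ = \ \sum_{k=1}^{r} \big( e_F \myprod \phi \myprod a_k \myprod e_F \big) \myprod \big( e_F \myprod b_k \myprod e_E \big) \ .
$$
Each factor $e_F \myprod \phi \myprod a_k \myprod e_F$ lies in ${\CalH}_F$ because $\phi \myprod a_k \in \Cic(\Gk)$, and each $x_k := e_F \myprod b_k \myprod e_E$ is a fixed element of $\myhecke{F}{E}$ independent of $\phi$. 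Hence $\myhecke{F}{E} = \sum_{k=1}^{r} {\CalH}_F \myprod x_k$ is finitely generated as a left ${\CalH}_F$-module. The right-module claim for $\myhecke{E}{F}$ follows by the mirror computation
$$
e_E \myprod \phi \myprod e_F \ = \ \sum_{k=1}^{r} \big( e_E \myprod a_k \myprod e_F \big) \myprod \big( e_F \myprod b_k \myprod \phi \myprod e_F \big) \ ,
$$
which exhibits $\{ e_E \myprod a_k \myprod e_F \}_{k=1}^{r}$ as a finite right generating set.

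The only genuine obstacle is statement (ii), and it is confined to the appeal to Proposition \ref{key-prop}(iii); the rest is formal. The decisive maneuver is the insertion $e_F = e_F \myprod e_F$, which splits each summand into an ${\CalH}_F$-coefficient times one of the finitely many fixed module elements. The single point to verify is that convolutions such as $\phi \myprod a_k$ and $b_k \myprod \phi$ remain in $\Cic(\Gk)$, which holds because $a_k, b_k \in \Cic(\Gk_E) \subset \Cic(\Gk)$.
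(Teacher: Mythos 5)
Your proof is correct, but for part (ii) it takes a genuinely different route from the paper. Part (i) is the same in both: the cyclicity of ${\myhecke{E}{F}}$ and ${\myhecke{F}{E}}$ over ${\CalH}_{E}$ is read off from the identities $e_{E} \myprod e_{F} = e_{F} = e_{F} \myprod e_{E}$ of Proposition \ref{key-prop}(iv). For part (ii), however, the paper does not use the full-idempotent property at all; instead it invokes, without proof, the general structural fact that for \emph{any} two open compact subgroups $J, J'$ of $\Gk$ the space $e_{J} \myprod \Cic(\Gk) \myprod e_{J'}$ is a finitely generated module over $e_{J} \myprod \Cic(\Gk) \myprod e_{J}$, applies it with $J = \Gk_{F}$, $J' = \Gk^{+}_{E}$, and then convolves on the right by $e_{E}$ to land in ${\myhecke{F}{E}}$. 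That cited fact is standard but not elementary (it rests on Bernstein's theory of the smooth category), so the paper's proof is short but outsources the real content. Your argument instead writes $e_{E} = \sum_{k=1}^{r} a_{k} \myprod e_{F} \myprod b_{k}$ using Proposition \ref{key-prop}(iii) — which the paper does prove — and then the insertion $e_{F} = e_{F} \myprod e_{F}$ turns every $e_{F} \myprod \phi \myprod e_{E}$ into $\sum_{k} \bigl( e_{F} \myprod \phi \myprod a_{k} \myprod e_{F} \bigr) \myprod x_{k}$ with $x_{k} = e_{F} \myprod b_{k} \myprod e_{E}$ fixed. This is self-contained, stays entirely within results established in the paper, and produces an explicit generating set of size $r$ (with the mirror computation handling the right-module case); what you give up is the generality of the paper's cited fact, which holds for arbitrary pairs of compact open subgroups rather than just the parahoric situation at hand. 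Both arguments are sound; yours is arguably the more honest one in the context of this paper, since the paper's appeal to the general finiteness statement is the only non-formal ingredient and is left unreferenced.
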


\begin{proof} \ The first assertion of statement (i) follows from the fact that $e_{E} {\,} {\myprod} {\,} e_{F} \ = \ e_{F}$, while the second follows from $e_{F}{\myprod} {\,} e_{E} \ = \ e_{F}$.

\smallskip

To see statement (ii), we use the fact that for any two open compact subgroups $J$ and $J'$ of $\Gk$,  the space $e_{J} {\,}{\myprod}{\,} \Cic (\Gk ) {\,}{\myprod}{\,} e_{J'}$ is a finitely generated $e_{J} {\,}{\myprod}{\,} \Cic (\Gk ) {\,}{\myprod}{\,} e_{J}$ module.  We take $J \, = \, \Gk_{F}$ and $J' \, = \, \Gk^{+}_{E}$ to see $e_{\Gk_{F}} {\,}{\myprod}{\,} \Cic (\Gk ) {\,}{\myprod}{\,} e_{\Gk^{+}_{E}}$ \ ($e_{\Gk_{F}}$ is $e_{F}$) is a finitely generated ${\CalH}_{F}$-module.  If we convolve on the right by $e_{E}$, we deduce ${\myhecke{F}{E}}$ is a finitely generated 
left  $\CalH_{F}$-module.  

\smallskip

Similar reasoning shows ${\myhecke{E}{F}}$ is a finitely generated right  $\CalH_{F}-$module.

\end{proof}

\vskip 0.30in

%%%%%%%%%%%%%%%%%%%%%%%%%%%%%%%%%%%%%%%%%%%%%%%%%%%%%%%%%%%%%%%%%%%%%%%%%%%%%%%
%%%%%%%%%%%%%%%%%%%%%%%%%%%%%%%%%%%%%%%%%%%%%%%%%%%%%%%%%%%%%%%%%%%%%%%%%%%%%%%
%%%%%%%%%%%%%%%%%%%%%%%%%%%%%%%%%%%%%%%%%%%%%%%%%%%%%%%%%%%%%%%%%%%%%%%%%%%%%%%

%%%%%%%%%%%%%%%%%%%%%%%%%%%%%%%%%%%%%%%%%%%%%%%%%%%%%%%%%%%%%%%%%%%%%%%%%
%%%%%%%%%%%%%%%%%%%%%%%%%%%%%%%%%%%%%%%%%%%%%%%%%%%%%%%%%%%%%%%%%%%%%%%%%
%%%%%%%%%%%%%%%%%%%%%%%%%%%%%%%%%%%%%%%%%%%%%%%%%%%%%%%%%%%%%%%%%%%%%%%%%

\section{Matrix Coefficients} 

\medskip

We fix a Haar measgure on $\Gk$ and make $\Cic (\Gk )$ into a convolution algebra.  The map $\staralg$ defined in \eqref{cstar} is an anti--involution.  For any facet $E$, the idempotent $e_{E}$ is fixed by $\staralg$, i.e., $e^{\staralg}_{E} = e_{E}$, and therefore $\staralg$ is an anti-involution of the Peter--Weyl algebra ${\CalH}_{E}$.  When $F$ is a chamber, the $\staralg$ anti-involution on the Iwahori--Hecke algebra ${\CalH}_{F}$ is the one mentioned in the introduction.

\medskip

In the Iwahori--Hecke algebra situation, using generators, \cite{BC} defined an anti-involution $\bullet$.  In this section we show that given a facet $E$ and a chamber $F$ containing $E$ (so ${\CalH}_{F} \ \subset \ {\CalH}_{E}$), we can define an anti--involution which we also denote as $\bullet$.   The involution depends on the chamber chosen (equivalently the Iwahori subgroup inside $\Gk_{E}$); but, since any two Iwahori subgroups are conjugate, the difference is up to a conjugation.  To define $\bullet$,  we need to exhibit a decomposition of ${\CalH}_{E}$ in terms of ${\CalH}_{F}$.   In the next section we show the Morita equivalence between the Iwahori--Hecke algebra ${\CalH}_{F}$ and the Peter--Weyl Iwahori algebra ${\CalH}_{E}$ preserves $\staralg$--hermitian and $\staralg$--unitary representations as well as $\bullet$--hermitian and 
$\bullet$--unitary representations.

\medskip

\subsection{Preliminaries}

\medskip

\begin{lemma}\label{l:matrel}
Suppose $(\sigma,V_{\sigma})$ and $(\tau,V_{\tau})$ are irreducible representations of a compact group $K$, with invariant positive definite forms $\langle \, , \, \rangle_{\sigma}$ and $\langle \, , \, \rangle_{\tau}$.   If the two representations are equivalent, we assume they are equal (and abbreviate the inner product to $\langle \, , \, \rangle$). \ Suppose $x_1,x_2 \, \in \, V_{\sigma}$ and $y_1,y_2 \, \in \, V_{\tau}.$ Then  
$$
\int_{K} \langle \, x_1 \, , \, \sigma(h) x_2 \, \rangle_{\sigma} \ \overline{\langle \, y_1 \, , \, \tau (h) y_2 \, \rangle_{\tau}} \ dh \ = \
\begin{cases}
{\hskip 0.05in} 0 \qquad {\hskip 0.60in} {\text{\rm{if $\sigma$ is not equivalent to $\tau$, }}} \\
\ \\
{\hskip 0.05in} {\frac{\meas (K)}{\deg (\sigma )}} \ \langle \, x_1 \, , \, y_1 \, \rangle\ \ovl{\langle \, x_2 \, , \, y_2 \, \rangle}  \qquad {\text{\rm{if}}} \ \ \sigma = \tau \, . 
\end{cases}
$$
\end{lemma}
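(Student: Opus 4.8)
The plan is to recognize the integral as a matrix coefficient of a suitably averaged operator and to invoke Schur's lemma, exactly as in the classical derivation of the Schur orthogonality relations. I fix the convention that each invariant form is linear in its first variable and conjugate-linear in its second, so that invariance means $\sigma(h)$ and $\tau(h)$ are unitary. For the fixed vectors $x_2 \in V_{\sigma}$ and $y_2 \in V_{\tau}$, I introduce the rank-one linear map $\phi : V_{\sigma} \to V_{\tau}$ defined by $\phi(v) = \langle \, v \, , \, x_2 \, \rangle_{\sigma} \, y_2$, together with its average
$$
\widetilde{\phi} \ := \ \int_{K} \tau(h) \circ \phi \circ \sigma(h)^{-1} \, dh \ .
$$
By translation-invariance of the Haar measure, $\widetilde{\phi}$ intertwines the two representations, i.e. $\tau(g)\,\widetilde{\phi} = \widetilde{\phi}\,\sigma(g)$ for all $g \in K$.

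First I would verify that the integral in the statement is precisely $\langle \, \widetilde{\phi}(x_1) \, , \, y_1 \, \rangle_{\tau}$. Using unitarity of $\sigma$ to rewrite $\phi(\sigma(h)^{-1}x_1) = \langle \, \sigma(h^{-1})x_1 \, , \, x_2 \, \rangle_{\sigma}\, y_2 = \langle \, x_1 \, , \, \sigma(h)x_2 \, \rangle_{\sigma}\, y_2$, and then pairing against $y_1$ (the scalar now sitting in the first slot of $\langle \, , \, \rangle_{\tau}$, hence coming out linearly), I obtain
$$
\langle \, \widetilde{\phi}(x_1) \, , \, y_1 \, \rangle_{\tau} \ = \ \int_{K} \langle \, x_1 \, , \, \sigma(h)x_2 \, \rangle_{\sigma} \ \overline{\langle \, y_1 \, , \, \tau(h)y_2 \, \rangle_{\tau}} \ dh \ ,
$$
which is exactly the left-hand side of the asserted identity.

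Next I would apply Schur's lemma to $\widetilde{\phi}$. If $\sigma$ is not equivalent to $\tau$, then $\widetilde{\phi} = 0$, so the integral vanishes and the first case follows. If $\sigma = \tau$, then $\widetilde{\phi} = \lambda \, \myId$ for a scalar $\lambda$, which I evaluate by taking traces: since $\tau = \sigma$, cyclicity of the trace gives $\trace(\sigma(h)\,\phi\,\sigma(h)^{-1}) = \trace(\phi)$, whence $\trace(\widetilde{\phi}) = \meas(K)\,\trace(\phi)$ and therefore $\lambda = \meas(K)\,\trace(\phi)/\deg(\sigma)$. A direct computation of the trace of the rank-one map $\phi$ in an orthonormal basis yields $\trace(\phi) = \langle \, y_2 \, , \, x_2 \, \rangle_{\sigma} = \overline{\langle \, x_2 \, , \, y_2 \, \rangle_{\sigma}}$. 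Substituting into $\langle \, \widetilde{\phi}(x_1) \, , \, y_1 \, \rangle = \lambda\,\langle \, x_1 \, , \, y_1 \, \rangle$ reproduces the constant $\meas(K)/\deg(\sigma)$ and the second case.

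The only genuine obstacle is the bookkeeping of conjugate-linearity: one must track carefully which slot each scalar enters, since this is precisely what turns $\langle x_2 , y_2 \rangle$ into its conjugate and dictates whether the first factor appears as $\langle x_1 , \sigma(h)x_2\rangle$ or its conjugate. In particular, pairing $\widetilde{\phi}(x_1)$ against $y_1$ in the order $\langle \, \widetilde{\phi}(x_1) \, , \, y_1 \, \rangle_{\tau}$ (rather than the reverse) is the choice that matches the integrand as written. Everything else is the standard intertwiner-averaging argument.
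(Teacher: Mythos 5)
Your proof is correct, but it takes a genuinely different route from the paper's. You run the classical intertwiner-averaging argument: form the rank-one map $\phi(v) = \langle v , x_2\rangle_{\sigma}\, y_2$, average it to get an intertwiner $\widetilde{\phi}$, apply Schur's lemma, and in the equivalent case pin down the scalar by a trace computation, $\trace (\phi) = \overline{\langle x_2 , y_2\rangle}$, giving $\lambda = \meas (K)\,\overline{\langle x_2 , y_2\rangle}/\deg (\sigma)$. The paper instead views both sides as $(K\times K)$-invariant forms on the tensor product $V_{\sigma}\otimes V_{\sigma}$: the product form $\langle x_1 , y_1\rangle\,\overline{\langle x_2 , y_2\rangle}$ and the integrated form $\int_{K}\langle x_1 , \sigma(h)x_2\rangle\,\overline{\langle y_1 , \sigma(h)y_2\rangle}\,dh$, and concludes they are proportional because the space of such invariant forms is one-dimensional (irreducibility of the tensor product as a $K\times K$-module), then evaluates the scalar. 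Both arguments ultimately rest on Schur's lemma, but yours localizes it to $\Hom_{K}(V_{\sigma},V_{\tau})$ while the paper's invokes it for $K\times K$. What your version buys is an explicit, self-contained evaluation of the constant $\meas (K)/\deg (\sigma)$ via the trace, including the bookkeeping that produces the conjugate on $\langle x_2 , y_2\rangle$ — a point the paper compresses into the unjustified phrase ``evaluation of the scalar yields.'' What the paper's version buys is brevity and a formulation that makes the $(K\times K)$-equivariance structure visible, which is the viewpoint reused later in the matrix-coefficient decompositions. Your handling of the sesquilinearity conventions is careful and correct; in particular the identity $\langle \widetilde{\phi}(x_1) , y_1\rangle_{\tau} = \int_{K}\langle x_1 , \sigma(h)x_2\rangle_{\sigma}\,\overline{\langle y_1 , \tau(h)y_2\rangle_{\tau}}\,dh$ checks out exactly as you state it.
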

\begin{proof} These are the Schur orthogonality relations. The case when the representations are inequivalent is  clear.  When they are equivalent, denote them both by $\sigma$.  The tensor product $V_{\sigma} \otimes V_{\sigma}$ has two $(K \times K)$-invariant form given by \ $\langle x_1\otimes y_1 , x_2\otimes y_2 \rangle_{\sigma \otimes \sigma} \ = \ \langle x_1 , y_1 \rangle \ {\overline{\langle x_2 , y_2 \rangle}}$ \ and \ $\langle\langle x_1\otimes y_1 , x_2\otimes y_2 \rangle\rangle \ := \ \int_{K} {\langle
  x_1 , \sigma(h) x_2\rangle} \ \overline {\langle y_1 , \sigma (h) y_2\rangle} \ dh$, and so they must be scalar multiples of one another.  Evaluation of the scalar yields 
$$
\langle \langle \ , \ \rangle \rangle \ = \ {\frac{\meas (K)}{\deg (\sigma )}} \ \langle \ , \ \rangle_{\sigma \otimes \sigma} \ .
$$
\end{proof}

When $(\sigma , V_{\sigma})$ is an irreducible representation of $K$, and $u, v \, \in \, V_{\sigma}$, we define the matrix coefficient  $\mymatrix{m}{\sigma}{u}{v}$ as:
\begin{equation}\label{defn-coeff}
\mymatrix{m}{\sigma}{u}{v} (k) \ := \ \langle \, u \, , \, \sigma (k) \, v \, \rangle \ . 
\end{equation}

\medskip

\begin{cor} \label{c:conv}
\begin{itemize}
\item[(i)] \ \ With the same notation as in Lemma \ref{l:matrel}, 
\begin{equation}\label{orthogonality-relations}
\mymatrix{m}{\sigma}{x_1}{x_2} \, \myprod \, \mymatrix{m}{\tau}{y_1}{y_2} \ = \ 
\begin{cases}
\ \ 0  &{\text{\rm{if $\sigma$ is not equivalent to $\tau$}}}   \\ 
\ \ {\frac{\meas (K)}{\deg (\sigma )}} \ \ {\overline{ \langle \, x_2 \, , \, y_1 \, \rangle}} \ \ \mymatrix{m}{\sigma}{x_1}{y_2} &{\text{\rm{if  \ $\sigma \, = \, \tau$}}}  \\
\end{cases}
\end{equation}
\item[(ii)] \ \ If $v \, \in \, V_\sigma$ satisfies 
\begin{equation}\label{normalization}
\langle v , v \rangle = {\frac{\deg (\sigma )}{\meas (\Gk_{E})}} \ ,
\end{equation} 
\noindent{then} the function {\,}$\mymatrix{m}{\sigma}{v}{v}${\,} is a convolution idempotent. 
\end{itemize}
\end{cor}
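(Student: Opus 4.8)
The plan is to deduce both statements directly from the Schur orthogonality relations of Lemma \ref{l:matrel}, with part (ii) an immediate specialization of part (i). The only real content is unwinding the convolution of two matrix coefficients into the integral appearing in that lemma.

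For part (i), I would begin by writing out the convolution from its definition: with the convention $(f \myprod g)(k) = \int_{K} f(h)\, g(h^{-1}k)\, dh$ used throughout, and with $f = \mymatrix{m}{\sigma}{x_1}{x_2}$, $g = \mymatrix{m}{\tau}{y_1}{y_2}$ as in \eqref{defn-coeff}, the value at $k$ is $\int_{K} \langle x_1, \sigma(h) x_2\rangle\, \langle y_1, \tau(h^{-1}k) y_2\rangle\, dh$. The key manipulation is to use the unitarity of $\tau$: writing $\tau(h^{-1}k) = \tau(h)^{-1}\tau(k)$ and moving the adjoint $\tau(h)^{-1} = \tau(h)^{\ast}$ across the form gives $\langle y_1, \tau(h^{-1}k)y_2\rangle = \langle \tau(h) y_1, \tau(k) y_2\rangle$, which I then rewrite as the conjugate $\overline{\langle \tau(k) y_2, \tau(h) y_1\rangle}$ so that the integrand matches exactly the expression integrated in Lemma \ref{l:matrel}.

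Applying Lemma \ref{l:matrel} with the vector $\tau(k) y_2$ in the first slot of the second factor and $y_1$ in the second slot then finishes part (i): when $\sigma$ is inequivalent to $\tau$ the integral vanishes, and when $\sigma = \tau$ it equals $\frac{\meas(K)}{\deg(\sigma)}\, \langle x_1, \tau(k) y_2\rangle\, \overline{\langle x_2, y_1\rangle}$. Recognizing $\langle x_1, \sigma(k) y_2\rangle = \mymatrix{m}{\sigma}{x_1}{y_2}(k)$ and pulling the $k$-independent scalar $\overline{\langle x_2, y_1\rangle}$ out front yields the asserted identity \eqref{orthogonality-relations}.

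For part (ii), I would specialize to $K = \Gk_{E}$, $\sigma = \tau$, and $x_1 = x_2 = y_1 = y_2 = v$. Since $\langle v, v\rangle$ is real and positive, $\overline{\langle v, v\rangle} = \langle v, v\rangle$, and the normalization \eqref{normalization} gives $\frac{\meas(\Gk_{E})}{\deg(\sigma)}\langle v, v\rangle = 1$; hence $\mymatrix{m}{\sigma}{v}{v} \myprod \mymatrix{m}{\sigma}{v}{v} = \mymatrix{m}{\sigma}{v}{v}$, so it is a convolution idempotent. The argument is routine; I expect the only delicate point to be the bookkeeping in the unitarity step — keeping straight which vector occupies the conjugate-linear slot of each Hermitian form and where the complex conjugation is introduced — since an error there changes which matrix coefficient appears on the right and misplaces the scalar $\overline{\langle x_2, y_1\rangle}$.
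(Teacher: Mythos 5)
Your proof is correct and takes essentially the same approach the paper intends: the paper's own proof of Corollary \ref{c:conv} is the single word ``Obvious,'' meaning precisely the deduction from the Schur orthogonality relations of Lemma \ref{l:matrel} that you carry out, with the unitarity step $\langle y_1,\tau(h^{-1}k)y_2\rangle = \overline{\langle \tau(k)y_2,\tau(h)y_1\rangle}$ feeding $\tau(k)y_2$ into the lemma's first slot handled correctly. Part (ii) then follows by the specialization you describe, using that $\langle v,v\rangle$ is real and the normalization \eqref{normalization}.
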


\begin{proof} Obvious.
\end{proof}

\medskip

\noindent{Recall} that
$(\mymatrix{m}{\sigma}{x_1}{x_2})^{\staralg}(k) \, = \, 
\overline{\langle \, x_1 \, , \, \sigma(k^{-1}) x_2 \, \rangle} \, = \, 
\overline{\langle \, \sigma (k) x_1 \, , \, x_2\,  \rangle} \, = \, 
\langle \, x_2 \, , \, \sigma (k) x_1 \,  \rangle \, = \, \mymatrix{m}{\sigma}{x_2}{x_1}$.  Thus, 
\begin{equation}
  \label{eq:convstar}
  (\mymatrix{m}{\sigma}{x_1}{x_2})^{\staralg} \ {\myprod} \ \mymatrix{m}{\sigma}{y_1}{y_2} \ = \ \mymatrix{m}{\sigma}{x_2}{x_1} \ {\myprod} \ \mymatrix{m}{\sigma}{y_1}{y_2} \ = \ {\frac{\meas (K)}{\deg (\sigma )}} \ \
  {\overline{\langle \, x_1 \, , \, y_1 \, \rangle}} \ \ \mymatrix{m}{\sigma}{x_2}{y_2} \ .  
\end{equation}

\smallskip

\noindent{We} also note that if $\lambda_{h}$ (resp.~$\rho_{h}$) is the left (resp.~right) translation representation, i.e., $(\lambda_{h}(f)) \, (k) \, = \, f (h^{-1}k)$ and $(\rho_{h}(f)) \, (k) \, = \, f (kh)$, then 
$$
\lambda_{h}( \mymatrix{m}{\sigma}{u}{v} ) \ = \ \mymatrix{m}{\sigma}{\sigma(h) u}{v}  {\text{\rm{\qquad and \qquad}}}   \rho_{h}( \mymatrix{m}{\sigma}{u}{v} ) \ = \ \mymatrix{m}{\sigma}{u}{\sigma(h) v}  \ .
$$
\vskip 0.50in

%%%%%%%%%%%%%%%%%%%%%%%%%%%%%%%%%%%%%%%%%%%%%%%%%%%%%%%%%%%%%%%%%%%%%%%%%%%%%
%%%%%%%%%%%%%%%%%%%%%%%%%%%%%%%%%%%%%%%%%%%%%%%%%%%%%%%%%%%%%%%%%%%%%%%%%%%%%
%%%%%%%%%%%%%%%%%%%%%%%%%%%%%%%%%%%%%%%%%%%%%%%%%%%%%%%%%%%%%%%%%%%%%%%%%%%%%
%%%%%%%%%%%%%%%%%%%%%%%%%%%%%%%%%%%%%%%%%%%%%%%%%%%%%%%%%%%%%%%%%%%%%%%%%%%%%

\subsection{Decompositions} \ 

\medskip

Suppose $F$ is a chamber in $\ScptB$, and $E$ is a facet in $F$.  Assume $(\sigma , V_{\sigma})$ and $(\tau , V_{\tau})$ are irreducible representations of $\Gk_{E}$ with a nonzero $\Gk_{F}$ fixed vector. We fix invariant positive definite forms $\langle \, , \, \rangle_{\sigma}$ and $\langle \, , \, \rangle_{\tau}$ on $V_{\sigma}$ and $V_{\tau}$ respectively.  For any $x \, , \, y \in V_{\sigma}$ and $k \in \Gk_{E}$, define the matrix coefficient $\mymatrix{m}{\sigma}{x}{y} (k) \, := \, \langle  x \, , \, \sigma (k) y \rangle$ as in \eqref{defn-coeff}.  \  Suppose $a \, , \, b \in V_{\sigma}$.  When $v \, \in \, V_\sigma^{\Gk_{F}}$, we note that the function {\,}$\mymatrix{m}{\sigma}{a}{v}${\,} (resp.~$\mymatrix{m}{\sigma}{v}{b}$) is right (resp.~left) $\Gk_{F}$-invariant. \ We further observe:

\begin{itemize} 

\item[(i)] \ If $v \, \in \, V_\sigma^{\Gk_{F}}$ satisfies the normalization \eqref{normalization}, then {\,}${\mymatrix{m}{\sigma}{v}{v}}${\,} is both a convolution idempotent and  $\Gk_{F}$-bi-invariant.

\smallskip

\item[(ii)] \ If $\{ v_{i} \}$ is an orthogonal basis of $V_{\sigma}$ with every basis vector $v_i$ satisfying the normalization \eqref{normalization}, then the idempotents $\mymatrix{m}{\sigma}{v_i}{v_i}$
 are mutually orthogonal, and 
\begin{equation}\label{central-idempotent}
e_{\sigma} \ := \ \sum^{\deg (\sigma )}_{i=1} \mymatrix{m}{\sigma}{v_i}{v_i} \ \ \in \ \ e_{E} \, {\myprod} \, \Cic (\Gk_{E} ) \, {\myprod} \, e_{E}
\end{equation}
\noindent{is} the central idempotent attached to $\sigma$.  Set 
\begin{equation}\label{setXi}
\aligned
\Xi \ := \ \ &{\text{\rm{collection of irreducible representations}}} \ (\sigma,V_{\sigma}) \\
&{\text{\rm{of $\Gk_{E}$ which have nonzero $\Gk_{F}$-fixed vectors{\,}.}}}
\endaligned
\end{equation}
\noindent{Then}, 
\begin{equation}\label{idem-formula}
e_{E} \ = \ {\underset {\sigma \, \in \, \Xi} \sum} \ e_{\sigma} \ = \ {\underset {\sigma \, \in \, \Xi} \sum} \, \sum^{\deg (\sigma )}_{i=1} \mymatrix{m}{\sigma}{v^{\sigma}_i}{v^{\sigma}_i} \ ,
\end{equation}

\noindent{where} {\,}$\{ v^{\sigma}_{i} \}${\,} is a orthogonal basis of $V_{\sigma}$ satisfying  \eqref{normalization}.
\end{itemize}

%\medskip

\vskip 0.30in

\begin{prop}\label{p:basis}  Assume $F \subset \ScptB$ is a chamber and $E \subset F$ is a facet.  With the above notation, suppose $(\sigma ,  V_{\sigma}) \, \in \, \Xi$.  Then, there exists (finitely) $a^{\sigma}_{k} \, \in \myhecke{E}{F}$ so that 
$$
\Theta_{\sigma} \ = \ {\underset k \sum} \ \ a^{\sigma}_{k} \ {\myprod} \ (a^{\sigma}_{k})^{\staralg}  \ .
$$ 
Hence, there exists (finitely) $b_{j} \, \in \myhecke{E}{F}$ so that 
$e_{E} \ = \ {\underset j \sum} \ \ b_{j} \ {\myprod} \ b^{\staralg}_{j}$.

\end{prop}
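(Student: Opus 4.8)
The plan is to produce both decompositions as sums of ``self-star'' products $a \myprod a^{\staralg}$ of matrix coefficients, all living in $\myhecke{E}{F} \, = \, e_{E} \myprod \Cic(\Gk) \myprod e_{F}$. The one structural input I would isolate first is a factorization lemma. Since $\sigma \in \Xi$, the fixed space $V_{\sigma}^{\Gk_{F}}$ is nonzero; I fix once and for all a vector $v \in V_{\sigma}^{\Gk_{F}}$ normalized by $\langle v , v \rangle = \deg(\sigma)/\meas(\Gk_{E})$ as in \eqref{normalization}, and for an arbitrary $x \in V_{\sigma}$ I set $a_{x} := \mymatrix{m}{\sigma}{x}{v}$. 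I would first verify $a_{x} \in \myhecke{E}{F}$: the function is supported on the compact group $\Gk_{E}$; it is right $\Gk_{F}$-invariant because $v$ is $\Gk_{F}$-fixed, so $a_{x} \myprod e_{F} = a_{x}$; and since its left $\Gk_{E}$-translates span matrix coefficients of $\sigma \in \Xi$, the central idempotent $e_{E} = \sum_{\rho \in \Xi} e_{\rho}$ fixes it on the left, $e_{E} \myprod a_{x} = a_{x}$. Hence $a_{x} \in e_{E} \myprod \Cic(\Gk) \myprod e_{F} = \myhecke{E}{F}$.

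The computational heart is the identity
$$
a_{x} \, \myprod \, a_{x}^{\staralg} \ = \ \mymatrix{m}{\sigma}{x}{x} \, ,
$$
valid for \emph{every} $x$, irrespective of its norm. I would derive it from $(\mymatrix{m}{\sigma}{x}{v})^{\staralg} = \mymatrix{m}{\sigma}{v}{x}$ (the identity recorded just before \eqref{eq:convstar}) together with the Schur orthogonality product of Corollary \ref{c:conv}(i) applied to $\mymatrix{m}{\sigma}{x}{v} \myprod \mymatrix{m}{\sigma}{v}{x}$. That product equals $\tfrac{\meas(\Gk_{E})}{\deg(\sigma)} \, \overline{\langle v , v \rangle} \, \mymatrix{m}{\sigma}{x}{x}$, and the scalar collapses to $1$ precisely by the normalization of $v$ (here $\langle v,v\rangle$ is real).

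With the factorization in hand both assertions are bookkeeping. For the first, I choose an orthonormal basis $\{u_{k}\}$ of $V_{\sigma}$, so that $\Theta_{\sigma} = \sum_{k} \mymatrix{m}{\sigma}{u_{k}}{u_{k}}$ (the trace written in an orthonormal basis); applying the identity termwise gives $\Theta_{\sigma} = \sum_{k} a_{u_{k}} \myprod a_{u_{k}}^{\staralg}$, so $a^{\sigma}_{k} := \mymatrix{m}{\sigma}{u_{k}}{v}$ does the job. For the second, I invoke the decomposition \eqref{idem-formula}, $e_{E} = \sum_{\sigma \in \Xi} \sum_{i} \mymatrix{m}{\sigma}{v^{\sigma}_{i}}{v^{\sigma}_{i}}$, whose vectors $v^{\sigma}_{i}$ already satisfy \eqref{normalization}; since the identity holds for any first slot, $\mymatrix{m}{\sigma}{v^{\sigma}_{i}}{v^{\sigma}_{i}} = a_{v^{\sigma}_{i}} \myprod a_{v^{\sigma}_{i}}^{\staralg}$, and relabeling the doubly-indexed family $\{ a_{v^{\sigma}_{i}} \}$ as $\{ b_{j} \}$ yields $e_{E} = \sum_{j} b_{j} \myprod b_{j}^{\staralg}$.

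The only genuinely delicate points, and where I would spend the care, are (a) the membership $a_{x} \in \myhecke{E}{F}$, in particular that left convolution by $e_{E}$ fixes $a_{x}$ (this is exactly where the hypothesis $\sigma \in \Xi$ is used), and (b) tracking the normalization constant so that the self-star products reproduce $\Theta_{\sigma}$ and $e_{E}$ exactly rather than up to a positive scalar. Neither is a real obstacle; once the orthogonality relations of Lemma \ref{l:matrel} and Corollary \ref{c:conv} are invoked, everything else is immediate.
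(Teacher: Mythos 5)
Your proposal is correct and is essentially the paper's own proof: both form $a^{\sigma}_{k} = \mymatrix{m}{\sigma}{u_{k}}{v}$ with $v \in V^{\Gk_{F}}_{\sigma}$ normalized as in \eqref{normalization}, use the Schur orthogonality relation of Corollary \ref{c:conv}(i) to get $a^{\sigma}_{k} \, \myprod \, (a^{\sigma}_{k})^{\staralg} = \mymatrix{m}{\sigma}{u_{k}}{u_{k}}$, and then sum over a basis of $V_{\sigma}$. The only (cosmetic) difference is your scalar bookkeeping — using an orthonormal basis so the sum is literally $\Theta_{\sigma}$, and invoking \eqref{idem-formula} directly for the $e_{E}$ statement — which is, if anything, slightly more careful about normalization constants than the paper's version.
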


\begin{proof}   \   We take $\{ u_i \}$ to be an orthogonal basis for $V_{\sigma}$ and $v \in V^{\Gk_{F}}_{\sigma}$ so that $\{ \mymatrix{m}{\sigma}{u_i}{u_i} \}$, and $\mymatrix{m}{\sigma}{v}{v}$ are idempotents. The coefficient  {\,}$a^{\sigma}_{k} \ = \ \mymatrix{m}{\sigma}{u_k}{v}${\,} is right $\Gk_{F}$-invariant, and 
$$
a^{\sigma}_{k} \ {\myprod} \ (a^{\sigma}_{k})^{\staralg} \ = \ \mymatrix{m}{\sigma}{u_k}{v} \ {\myprod} \ \mymatrix{m}{\sigma}{v}{u_k} \ = \ \mymatrix{m}{\sigma}{u_k}{u_k} \ ,
$$
\noindent{and} so $\Theta_{\sigma} \, = \, {\underset k {\sum}} \ \mymatrix{m}{\sigma}{u_k}{u_k} \, = \, {\underset k {\sum}} a^{\sigma}_{k} \ {\myprod} \ (a^{\sigma}_{k})^{\staralg}$.  

\end{proof}

\vskip 0.30in

\begin{prop}\label{heckedecomp} \  For each $\sigma \in \Xi$, let $\{ \, v^{\sigma}_{i} \, \}$ and $\{ \, w^{\sigma}_{i} \, \}$ be two orthogonal bases of $V_{\sigma}$.  Assume all these vectors satisfy the normalization \eqref{normalization}.  Then, ${\CalH}_{E}$ has a direct sum decomposition
\begin{equation}\label{dsum0}
{\CalH}_{E} \ = \ {\underset {\sigma, \tau \, \in \, \Xi} \bigoplus} \ \ {\underset {\text{\small{$\begin{matrix} {\,}^{i=1{\,}} \end{matrix}$}}} {\overset {\deg (\sigma )} {\bigoplus}}}  \ \ {\underset {\text{\small{$\begin{matrix} {\,}^{j=1} \end{matrix}$}}} {\overset {\deg (\tau )} {\bigoplus}}}
\ \ \mymatrix{m}{\sigma}{v^{\sigma}_i}{v^{\sigma}_i} \ {\myprod} \ \Cic (\Gk ) \ {\myprod} \ \mymatrix{m}{\tau}{w^{\tau}_j}{w^{\tau}_j} \ \ .
\end{equation}
\noindent{In} particular, any $f \in {\CalH}_{E}$ can be written uniquely as: 

\begin{equation}\label{dsum1}
f \ = {\underset {\sigma \, \in \, \Xi} \sum} \, \sum^{\deg (\sigma )}_{i=1} {\underset {\tau \, \in \, \Xi} \sum} \, \sum^{\deg (\tau )}_{j=1} f_{\sigma , i , \tau , j} \ ,
\end{equation}
\noindent{where} $f_{\sigma , i , \tau , j} \, = \, \mymatrix{m}{\sigma}{v^{\sigma}_i}{v^{\sigma}_i} \ {\myprod} \ f \ {\myprod} \ \mymatrix{m}{\tau}{w^{\tau}_j}{w^{\tau}_j}$.
\end{prop}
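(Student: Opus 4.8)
The plan is to realize the unit $e_{E}$ of $\CalH_{E}$ as a sum of mutually orthogonal idempotents in two ways, one coming from the basis $\{ v^{\sigma}_{i} \}$ and one from $\{ w^{\tau}_{j} \}$, and then to run the usual Peirce-decomposition bookkeeping. To lighten notation I abbreviate $p^{\sigma}_{i} := \mymatrix{m}{\sigma}{v^{\sigma}_i}{v^{\sigma}_i}$ and $q^{\tau}_{j} := \mymatrix{m}{\tau}{w^{\tau}_j}{w^{\tau}_j}$. First I would record the relevant orthogonality. Taking $K = \Gk_{E}$ in Corollary \ref{c:conv}(i), distinct members of $\Xi$ are inequivalent, so $p^{\sigma}_{i} \myprod p^{\sigma'}_{i'} = 0$ whenever $\sigma \neq \sigma'$; and when $\sigma = \sigma'$ the product equals $\tfrac{\meas(\Gk_{E})}{\deg(\sigma)}\,\overline{\langle v^{\sigma}_{i}, v^{\sigma}_{i'}\rangle}\, \mymatrix{m}{\sigma}{v^{\sigma}_i}{v^{\sigma}_{i'}}$, which vanishes for $i \neq i'$ (orthogonal basis) and equals $p^{\sigma}_{i}$ for $i = i'$ by the normalization \eqref{normalization}. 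Thus $\{ p^{\sigma}_{i} \}$ is a family of mutually orthogonal idempotents, and likewise $\{ q^{\tau}_{j} \}$. By \eqref{idem-formula} both families sum to the identity of $\CalH_{E}$:
$$
\sum_{\sigma \in \Xi} \sum_{i=1}^{\deg(\sigma)} p^{\sigma}_{i} \ = \ e_{E} \ = \ \sum_{\tau \in \Xi} \sum_{j=1}^{\deg(\tau)} q^{\tau}_{j} \ .
$$

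Next I would prove the spanning. Since $e_{E}$ is the unit of $\CalH_{E}$, any $f \in \CalH_{E}$ equals $e_{E} \myprod f \myprod e_{E}$; substituting the two expansions of $e_{E}$ above and distributing yields $f = \sum_{\sigma,i,\tau,j} p^{\sigma}_{i} \myprod f \myprod q^{\tau}_{j}$, which is exactly \eqref{dsum1}. Each summand lies in $p^{\sigma}_{i} \myprod \Cic(\Gk) \myprod q^{\tau}_{j}$, and this subspace is contained in $\CalH_{E}$ because $e_{E} \myprod p^{\sigma}_{i} = p^{\sigma}_{i}$ and $q^{\tau}_{j} \myprod e_{E} = q^{\tau}_{j}$, by orthogonality within each family. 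Hence $\CalH_{E}$ is the sum of the spaces appearing in \eqref{dsum0}.

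Finally I would establish directness, equivalently uniqueness of \eqref{dsum1}. Suppose $f = \sum_{\sigma,i,\tau,j} g_{\sigma,i,\tau,j}$ with $g_{\sigma,i,\tau,j} \in p^{\sigma}_{i} \myprod \Cic(\Gk) \myprod q^{\tau}_{j}$; idempotency gives $p^{\sigma}_{i} \myprod g_{\sigma,i,\tau,j} = g_{\sigma,i,\tau,j} = g_{\sigma,i,\tau,j} \myprod q^{\tau}_{j}$. Multiplying the expression for $f$ on the left by $p^{\sigma'}_{i'}$ and on the right by $q^{\tau'}_{j'}$ and invoking mutual orthogonality of each family annihilates every term except the one with $(\sigma,i,\tau,j) = (\sigma',i',\tau',j')$, so $g_{\sigma',i',\tau',j'} = p^{\sigma'}_{i'} \myprod f \myprod q^{\tau'}_{j'} = f_{\sigma',i',\tau',j'}$. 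Thus each component is forced, which simultaneously proves uniqueness and the directness of the sum in \eqref{dsum0}.

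The argument is formal once the orthogonality relations are in hand, so the one genuine subtlety — and the point I would be most careful about — is that the left and right idempotent families come from two \emph{different} bases: the cross products $p^{\sigma}_{i} \myprod q^{\tau}_{j}$ need not vanish, so the isolation step must apply the $p$'s only on the left and the $q$'s only on the right, relying on orthogonality \emph{within} each family rather than across the two, together with the basis-independent identity \eqref{idem-formula} that guarantees both families produce the same total idempotent $e_{E}$.
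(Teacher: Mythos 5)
Your proposal is correct and follows essentially the same route as the paper: expand $f = e_{E} \myprod f \myprod e_{E}$ via the decomposition \eqref{idem-formula} to get the spanning, then isolate components by convolving on the left with one family of idempotents and on the right with the other, using the Schur orthogonality of Corollary \ref{c:conv} to establish directness. Your explicit remark about applying the $p$'s only on the left and the $q$'s only on the right (since the two bases differ) is exactly the point implicit in the paper's directness step, just spelled out more carefully.
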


\begin{proof} \ Suppose $f \in {\CalH}_{E}$.  Since $f = e_{E} \myprod f \myprod e_{E}$, the decomposition \eqref{idem-formula} of $e_{E}$ then yields the sum \eqref{dsum1}, i.e., ${\CalH}_{E}$ is a sum of the indicated subspaces in \eqref{dsum0}.  To see the sum is direct, we note that convolution on the left by $\mymatrix{m}{\sigma}{v^{\sigma}_{i}}{v^{\sigma}_{i}}$ and the right by $\mymatrix{m}{\sigma}{w^{\tau}_{j}}{w^{\tau}_{j}}$ is zero on $\mymatrix{m}{\kappa}{v^{\kappa}_{r}}{v^{\kappa}_{r}} \ {\myprod} \ \Cic (\Gk ) \ {\myprod} \ \mymatrix{m}{\lambda}{w^{\lambda}_{s}}{w^{\lambda}_{s}}$ unless $(\sigma , i, \tau , j) = (\kappa , r , \lambda ,s)$, and  is the identity (since $v^{\sigma}_{i}$, $v^{\tau}_{j}$ are properly normalized) on $\mymatrix{m}{\sigma}{v^{\sigma}_i}{v^{\sigma}_i} \ {\myprod} \ \Cic (\Gk ) \ {\myprod} \ \mymatrix{m}{\tau}{w^{\tau}_j}{w^{\tau}_j}$.  Thus, the sum is direct.

\end{proof}

%%%%%%%%%%%%%%%%%%%%%%%%%%%%%%%%%%%%%%%%%%%%%%%%%%%%%%%%%%%%%%%%%%%%%%%%%%%%%%%
%%%%%%%%%%%%%%%%%%%%%%%%%%%%%%%%%%%%%%%%%%%%%%%%%%%%%%%%%%%%%%%%%%%%%%%%%%%%%%%%
%%%%%%%%%%%%%%%%%%%%%%%%%%%%%%%%%%%%%%%%%%%%%%%%%%%%%%%%%%%%%%%%%%%%%%%%%%%%%%%
%%%%%%%%%%%%%%%%%%%%%%%%%%%%%%%%%%%%%%%%%%%%%%%%%%%%%%%%%%%%%%%%%%%%%%%%%%%%%%

\vskip 0.30in

%%%%%%%%%%%%%%%%%%%%%%%%%%%%%%%%%%%%%%%%%%%%%%%%%%%%%%%%%%%%%%%%%%%%%%%%%
%%%%%%%%%%%%%%%%%%%%%%%%%%%%%%%%%%%%%%%%%%%%%%%%%%%%%%%%%%%%%%%%%%%%%%%%%
%%%%%%%%%%%%%%%%%%%%%%%%%%%%%%%%%%%%%%%%%%%%%%%%%%%%%%%%%%%%%%%%%%%%%%%%%

\section{Morita Equivalence}

\medskip

Let ${\frcC} ({\CalH}_{E})$,  ${\frcC}_{\text{\rm{fg}}} ({\CalH}_{E})$, and ${\frcC}_{\text{\rm{fin}}} ({\CalH}_{E})$ denote the categories of left, left finitely generated, and left finite-dimensional  ${\CalH}_{E}$-modules respectively, and ${\frcC} ({\CalH}_{F})$, ${\frcC}_{\text{\rm{fg}}} ({\CalH}_{F})$, and ${\frcC}_{\text{\rm{fin}}} ({\CalH}_{F})$ the analogous categories of (left) ${\CalH}_{F}$-modules.

\begin{thm}\label{main0} \quad The algebras ${\CalH}_{E}$ and ${\CalH}_{F}$ are Morita equivalence.  We have:
\smallskip  
\begin{itemize}
\item[(i)] \ \ The two maps:
$$
\begin{array}{crc}
{\myhecke{E}{F}} \otimes_{{\CalH}_{F}} {\myhecke{F}{E}} &\longrightarrow &{\CalH}_{E} \matrixskip \\
f \otimes_{{\CalH}_{F}} g &\longrightarrow  &{f \myprod g} 
\end{array}
\qquad {\text{\rm{and}}} \qquad 
\begin{array}{crc}
{\myhecke{F}{E}} \otimes_{{\CalH}_{E}} {\myhecke{E}{F}} &\longrightarrow  &{\CalH}_{F} \matrixskip \\
g \otimes_{{\CalH}_{E}} f   &\longrightarrow  &{g \myprod f} 
\end{array}
$$
\noindent are isomorphisms.

\smallskip

\item[(ii)] \ \ The maps
$$
\begin{array}{ccc}
{{\frcC} ({\CalH}_{F})} &{\longrightarrow} &{{\frcC} ({\CalH}_{E})} \matrixskip \\ 
{X} &{\longrightarrow}  &{{\myhecke{E}{F}} \otimes_{{\CalH}_{F}} X}
\end{array}
\qquad {\text{\rm{and}}} \ \ \quad 
\begin{array}{ccc}
{{\frcC} ({\CalH}_{E})} &{\longrightarrow} &{{\frcC} ({\CalH}_{F})} \matrixskip \\ 
{Y} &\longrightarrow &{{\myhecke{F}{E}} \otimes_{{\CalH}_{E}} Y}
\end{array}
$$

\smallskip

\noindent{are} inverses to each other, and give an equivalence between the categories of left ${\CalH}_{E}$-modules and left ${\CalH}_{F}$-modules.
\medskip

\item[(iii)] \ \ The category equivalences of part (i) restricts to equivalences between \ ${\frcC}_{\text{\rm{fg}}} ({\CalH}_{E})${\,} and {\,}${\frcC}_{\text{\rm{fg}}} ({\CalH}_{F})$ and between ${\frcC}_{\text{\rm{fin}}} ({\CalH}_{E})${\,} and {\,}${\frcC}_{\text{\rm{fin}}} ({\CalH}_{F})$.

\end{itemize}
\end{thm}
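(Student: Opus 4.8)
The plan is to recognize Theorem \ref{main0} as the classical Morita theorem for a full idempotent, applied to the ring $A = \CalH_{E}$ with idempotent $e = e_{F}$. Under this dictionary $eAe = e_{F}\myprod\CalH_{E}\myprod e_{F} = \CalH_{F}$, while $Ae = \CalH_{E}\myprod e_{F} = \myhecke{E}{F}$ and $eA = e_{F}\myprod\CalH_{E} = \myhecke{F}{E}$ (using $e_{E}\myprod e_{F} = e_{F} = e_{F}\myprod e_{E}$ from Proposition \ref{key-prop}(iv)). Proposition \ref{hecke-modules} already supplies the two inputs Morita theory requires: the factorizations $\CalH_{E} = \myhecke{E}{F}\myprod\myhecke{F}{E}$ and $\CalH_{F} = \myhecke{F}{E}\myprod\myhecke{E}{F}$, and the fullness $e_{E}\in\CalH_{E}\myprod e_{F}\myprod\CalH_{E}$. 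So it suffices to verify that the two multiplication maps in part (i) are bijective; parts (ii) and (iii) then follow formally.

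For part (i), the second map $\myhecke{F}{E}\otimes_{\CalH_{E}}\myhecke{E}{F}\to\CalH_{F}$ is the canonical isomorphism $eA\otimes_{A}Ae\cong eAe$ valid for \emph{any} idempotent: an elementary tensor $e_{F}\myprod a\otimes b\myprod e_{F}$ collapses, via the $\CalH_{E}$-balancing relation, to $e_{F}\otimes a\myprod b\myprod e_{F}$, whence the tensor product is carried isomorphically onto $e_{F}\myprod\CalH_{E}\myprod e_{F}=\CalH_{F}$ by multiplication. The first map $\mu:\myhecke{E}{F}\otimes_{\CalH_{F}}\myhecke{F}{E}\to\CalH_{E}$ is surjective because its image is $\myhecke{E}{F}\myprod\myhecke{F}{E}=\CalH_{E}$. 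The injectivity of $\mu$ is the crux of the whole theorem, and is exactly where fullness is used. I would fix an expression $e_{E}=\sum_{k}x_{k}\myprod e_{F}\myprod y_{k}$ with $x_{k},y_{k}\in\CalH_{E}$, and for any tensor $\xi=\sum_{i}p_{i}\otimes q_{i}$ rewrite each factor as $q_{i}=q_{i}\myprod e_{E}=\sum_{k}(q_{i}\myprod x_{k}\myprod e_{F})\myprod(e_{F}\myprod y_{k})$. Since $q_{i}\myprod x_{k}\myprod e_{F}\in\CalH_{F}$, it may be pushed across the $\CalH_{F}$-balanced tensor, producing $\xi=\sum_{k}\bigl(\mu(\xi)\myprod x_{k}\myprod e_{F}\bigr)\otimes(e_{F}\myprod y_{k})$. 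Thus $\mu(\xi)=0$ forces $\xi=0$, and $\mu$ is an isomorphism.

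Granting part (i), part (ii) is purely formal: both maps of (i) are bimodule isomorphisms, so associativity of $\otimes$ gives natural isomorphisms $\myhecke{F}{E}\otimes_{\CalH_{E}}\bigl(\myhecke{E}{F}\otimes_{\CalH_{F}}X\bigr)\cong\CalH_{F}\otimes_{\CalH_{F}}X\cong X$ and likewise $\myhecke{E}{F}\otimes_{\CalH_{F}}\bigl(\myhecke{F}{E}\otimes_{\CalH_{E}}Y\bigr)\cong Y$, exhibiting the two functors as mutually inverse equivalences. For part (iii) I would invoke the finite-generation statements of Proposition \ref{module-generators}. Since $\myhecke{E}{F}$ is cyclic as a left $\CalH_{E}$-module, a surjection $\CalH_{F}^{\,m}\twoheadrightarrow X$ tensors (right-exactly) to a surjection $\myhecke{E}{F}^{\,m}\twoheadrightarrow\myhecke{E}{F}\otimes_{\CalH_{F}}X$, so the image of a finitely generated $X$ is finitely generated over $\CalH_{E}$; and since $\myhecke{E}{F}$ is finitely generated as a right $\CalH_{F}$-module, a surjection $\CalH_{F}^{\,n}\twoheadrightarrow\myhecke{E}{F}$ tensors to a surjection $X^{\,n}\twoheadrightarrow\myhecke{E}{F}\otimes_{\CalH_{F}}X$, so the image of a finite-dimensional $X$ is finite-dimensional. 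The symmetric generation facts for $\myhecke{F}{E}$ handle the inverse functor, giving the restricted equivalences on ${\frcC}_{\text{\rm{fg}}}$ and ${\frcC}_{\text{\rm{fin}}}$. The only genuinely substantive step is the injectivity of $\mu$; everything else is bookkeeping with the balancing relation together with the generation facts already established.
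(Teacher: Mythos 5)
Your proof is correct and takes essentially the same route as the paper: the paper's entire proof is the observation that $e_{F}$ is a full idempotent of ${\CalH}_{E}$ (Proposition \ref{hecke-modules}) together with a citation of Chapter 18 of \cite{Lam}, and what you have written out—the identifications $e_{F}\myprod{\CalH}_{E}\myprod e_{F}={\CalH}_{F}$, the two multiplication isomorphisms with injectivity of $\mu$ via an expression $e_{E}=\sum_{k}x_{k}\myprod e_{F}\myprod y_{k}$, and the finite-generation bookkeeping from Proposition \ref{module-generators}—is precisely the standard full-idempotent argument that citation invokes.
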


\begin{proof} \quad  The statements follow from the fact that {\,}$e_{F} \in {\CalH}_{E}${\,} is a full idempotent (${\CalH}_{E} \, = \, {\CalH}_{E} \myprod e_{F} \myprod {\CalH}_{E}$) \ (see Chapter 18 \cite{Lam}).

\end{proof}

We a grateful to Konstantin Ardakov whom brought our attention to the fruitfulness of establishing Morita equivalence via full idempotents.

\medskip

We remark there is a similar Morita equivalence between ${\CalH}^{\text{\rm{fin}}}_{F}$ and ${\CalH}^{\text{\rm{fin}}}_{E}$.

\bigskip

The Morita equivalence of ${\CalH}_{E}$ and ${\CalH}_{F}$ means their
centers are isomorphic.   The center of the Peter--Weyl algebra
${\CalH}_{E}$ can be obtained from the (well known) center of the Iwahori Hecke algebra ${\CalH}_{F}$ via the following result.

\begin{cor}\label{main1} Express $e_{E}$ as $e_{E} \, = \, {\underset {i=1} {\overset{r} \sum}} \ a_{i} \myprod b_i$ with $a_{i} \in {\myhecke{E}{F}}$ and $b_i \ \in {\myhecke{F}{E}}$.  Then, the isomorphism of centers in the Morita equivalence of ${\CalH}_{F}$ and ${\CalH}_{E}$ is given by 
$$
z \ \ \longrightarrow \ \ \sum^{r}_{i=1} \ a_{i} {\,}\myprod{\,} z  {\,}\myprod{\,} b_i \ \ .
$$
\end{cor}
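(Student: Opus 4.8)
The plan is to recognize the stated map as the inverse of the elementary ``corner restriction'' isomorphism attached to the full idempotent $e_F \in \CalH_E$, and then to verify directly that it lands in the center. Write $A := \CalH_E$ and $B := \CalH_F = e_F \myprod A \myprod e_F$. By Proposition \ref{hecke-modules} the idempotent $e_F$ is full in $A$, and the expansion $e_E = \sum_{i=1}^r a_i \myprod b_i$ records this fullness concretely: by Proposition \ref{module-generators} we have $a_i = a_i \myprod e_F \in \myhecke{E}{F}$ and $b_i = e_F \myprod b_i \in \myhecke{F}{E}$, so this identity exhibits the unit $e_E$ inside $A \myprod e_F \myprod A$. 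I would define $\rho \colon Z(A) \to B$ by $\rho(w) = e_F \myprod w \myprod e_F$ and $\sigma \colon B \to A$ by $\sigma(z) = \sum_i a_i \myprod z \myprod b_i$, and the goal is to show that $\rho$ restricts to a ring isomorphism $Z(A) \xrightarrow{\sim} Z(B)$ whose inverse is $\sigma|_{Z(B)}$; this inverse is precisely the claimed formula.

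First I would dispatch $\rho$. Since $w$ is central in $A$ and $e_F^2 = e_F$, one has $\rho(w) = w \myprod e_F$, and using $e_F \myprod b = b = b \myprod e_F$ for $b \in B$ together with the centrality of $w$ one checks in a line that $\rho(w) \in Z(B)$ and that $\rho$ is a unital ring homomorphism, with $\rho(e_E) = e_F = 1_B$.

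The key step is to show $\sigma(z) \in Z(A)$ for $z \in Z(B)$, and this is where the Morita isomorphism of Theorem \ref{main0}(i) enters. Because $z$ is central in $B$, the rule $p \otimes q \mapsto p \myprod z \myprod q$ is a well-defined $A$-bimodule map $\Phi_z \colon \myhecke{E}{F} \otimes_{\CalH_F} \myhecke{F}{E} \to A$, and by construction $\sigma(z) = \Phi_z\!\left(\sum_i a_i \otimes b_i\right)$. For any $a \in A$, the two elements $\sum_i (a \myprod a_i) \otimes b_i$ and $\sum_i a_i \otimes (b_i \myprod a)$ of the tensor product both map to $a$ under the multiplication isomorphism $\mu$ of Theorem \ref{main0}(i) (using $\sum_i a_i \myprod b_i = e_E$); since $\mu$ is an isomorphism, hence injective, these two elements coincide. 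Applying $\Phi_z$ to this equality gives $a \myprod \sigma(z) = \sigma(z) \myprod a$, so $\sigma(z) \in Z(A)$.

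Finally I would check that $\rho$ and $\sigma$ are mutually inverse on centers. For $z \in Z(B)$, setting $\alpha_i := e_F \myprod a_i$ and $\beta_i := b_i \myprod e_F$ (both in $B$, by Proposition \ref{key-prop}(iv)) and noting $\sum_i \alpha_i \myprod \beta_i = e_F \myprod e_E \myprod e_F = e_F$, the centrality of $z$ in $B$ yields $\rho(\sigma(z)) = \sum_i \alpha_i \myprod z \myprod \beta_i = z \myprod \sum_i \alpha_i \myprod \beta_i = z$. For $w \in Z(A)$, using $a_i \myprod e_F = a_i$, $e_F \myprod b_i = b_i$, and the centrality of $w$ in $A$, one gets $\sigma(\rho(w)) = \sum_i a_i \myprod w \myprod b_i = w \myprod \sum_i a_i \myprod b_i = w$. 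Hence $\sigma|_{Z(B)}$ is the inverse of the ring isomorphism $\rho|_{Z(A)}$, and is therefore the isomorphism of centers induced by the Morita equivalence (Chapter 18 of \cite{Lam}), which is the asserted formula. I expect the one genuinely delicate point to be the centrality of $\sigma(z)$: it rests on the \emph{injectivity} half of Theorem \ref{main0}(i), channeled through the twisted bimodule map $\Phi_z$, whereas the remaining verifications are routine bookkeeping with the idempotent identities of Propositions \ref{key-prop} and \ref{module-generators}.
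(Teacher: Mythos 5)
Your proof is correct, but it takes a genuinely different route from the paper's. The paper argues categorically: it regards $z \in Z({\CalH}_{F})$ as the self-morphism ``multiplication by $z$'' of every $X \in {\frcC}({\CalH}_{F})$, transports it through the equivalence functor $X \mapsto {\myhecke{E}{F}} \otimes_{{\CalH}_{F}} X$ of Theorem \ref{main0}, and reads off the answer from the one-line tensor computation $f \otimes_{{\CalH}_{F}} zx = \sum_i a_i \otimes_{{\CalH}_{F}} z\,(b_i \myprod f)\,x = \bigl(\sum_i a_i \myprod z \myprod b_i\bigr)(f \otimes_{{\CalH}_{F}} x)$; both the centrality of $\sum_i a_i \myprod z \myprod b_i$ and the fact that this map \emph{is} the Morita isomorphism of centers are then absorbed into the general identification of the center of an algebra with the endomorphisms of the identity functor of its module category. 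You instead work entirely inside the algebras: you exhibit the corner map $\rho(w) = e_F \myprod w \myprod e_F$ and the stated formula $\sigma$ as explicit, mutually inverse ring maps between $Z({\CalH}_{E})$ and $Z({\CalH}_{F})$, and you prove centrality of $\sigma(z)$ directly via the bimodule map $\Phi_z$ together with the \emph{injectivity} of the multiplication isomorphism of Theorem \ref{main0}(i). Both arguments pivot on the same two facts --- the fullness identity $\sum_i a_i \myprod b_i = e_E$ and the $\CalH_F$-balancing made possible by centrality of $z$ --- but they distribute the burden differently: the paper keeps the computation minimal and leans on category-theoretic generalities for the interpretation, while your argument is self-contained at the algebra level and yields, as a bonus, the explicit inverse map $w \mapsto e_F \myprod w \myprod e_F$; the only point you delegate to \cite{Lam} is that this corner map is the center isomorphism induced by the equivalence, which you could also verify in one line, since the quasi-inverse functor is $Y \mapsto {\myhecke{F}{E}} \otimes_{{\CalH}_{E}} Y \cong e_F \myprod Y$, on which multiplication by $w \in Z({\CalH}_{E})$ acts as multiplication by $e_F \myprod w \myprod e_F$. (Two trivial citation slips: $a_i \myprod e_F = a_i$ and $e_F \myprod a_i \in {\CalH}_{F}$ follow from $a_i \in e_E \myprod \Cic(\Gk) \myprod e_F$, i.e., from the definition of ${\myhecke{E}{F}}$, rather than from Propositions \ref{module-generators} and \ref{key-prop}(iv).)
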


\medskip

\begin{proof} \ \ If $z$ is in the center of ${\CalH}_{F}$, then for any $X \in {\frcC} ({\CalH}_{F})$ the map $x \, \rightarrow \, zx$ commutes with any self-morphism of $X$, i.e., it is in the center of the category.  Under the functor 
$X \rightarrow {\myhecke{E}{F}} \otimes_{{\CalH}_{F}} X$, we have a similar self-morphism  $(f \otimes_{{\CalH}_{F}} x) \rightarrow (f \otimes_{{\CalH}_{F}} zx)$ of ${\myhecke{E}{F}} \otimes_{{\CalH}_{F}} X$ in the category ${\frcC} ({\CalH}_{E})$. We commute
$$
\aligned
f \otimes_{{\CalH}_{F}} zx \ &= \ (e_{E} \myprod f)  \otimes_{{\CalH}_{F}} zx \ = \ {\underset {i=1} {\overset{r} \sum}} \ a_{i} \myprod b_i \myprod f \otimes_{{\CalH}_{F}} zx \\ 
&= \ {\underset {i=1} {\overset{r} \sum}} \ a_{i} \otimes_{{\CalH}_{F}}  (b_i \myprod f)\, zx \qquad {\text{\rm{(since $b_i \myprod f \, \in \, {\CalH}_{F}$)}}} \\ 
&= \ {\underset {i=1} {\overset{r} \sum}} \ a_{i} \otimes_{{\CalH}_{F}}  z \myprod (b_i \myprod f) \, x \ = \  {\underset {i=1} {\overset{r} \sum}} \ a_{i} \myprod (z \myprod (b_i \myprod f)) \otimes_{{\CalH}_{F}}     x \\
&= \ \big( \, {\underset {i=1} {\overset{r} \sum}} \ a_{i} \myprod z \myprod b_i \, \big) (f \otimes_{{\CalH}_{F}} x) \ \ .
\endaligned
$$ 
\noindent So, the element ${\underset {i=1} {\overset{r} \sum}} \ a_{i} \myprod z \myprod b_i${\,} is the central element of ${\CalH}_{E}$ corresponding to $z$.
\end{proof}
\vskip 0.300in

%%%%%%%%%%%%%%%%%%%%%%%%%%%%%%%%%%%%%%%%%%%%%%%%%%%%%%%%%%%%%%%%%%%%%%%%%
%%%%%%%%%%%%%%%%%%%%%%%%%%%%%%%%%%%%%%%%%%%%%%%%%%%%%%%%%%%%%%%%%%%%%%%%%
%%%%%%%%%%%%%%%%%%%%%%%%%%%%%%%%%%%%%%%%%%%%%%%%%%%%%%%%%%%%%%%%%%%%%%%%%

\section{Involutions and Forms}

\smallskip

\subsection{Extension of an anti-involution of ${\CalH}_{F}$ to ${\CalH}_{E}$} \ \ 

\medskip

We continue with the assumption $F \subset \ScptB$ is a chamber and $E
\subset F$ a facet.  Let $\staralg$ be the anti-involution
\eqref{cstar}.   Suppose the Iwahori-Hecke algebra ${\CalH}_{F}$ has
an anti-involution $\circ$ satisfying 
\begin{equation}\label{circ}
\forall \ f \ \in e_{F} \myprod \Cic (\Gk_{E} ) \myprod e_{F} \quad : \quad f^{\circ} \ = \ f^{\staralg}
\end{equation}
\noindent{We} show here, that it is possible to extend the anti-involution $\circ$ of ${\CalH}_{F}$ to an anti-involution of ${\CalH}_{E}$.

\medskip

\begin{lemma}\label{submodule0} \ For each $\kappa \, \in \, \Xi$,
  choose two bases $\{ v^{\kappa}_{i} \}$, and $\{ w^{\kappa}_{i} \}$ of $V_{\kappa}$, and choose two elements $y^{\kappa}, z^{\kappa} \, \in \, V^{\Gk_{F}}_{\kappa}$  satisfying the normalization \eqref{normalization}.     \ Then, 
\begin{itemize}
\item[(i)] \ The $\Gk^{F}$-bi-invariant function $\mymatrix{m}{\sigma}{y^{\sigma}}{v^{\sigma}_{i}} \, \myprod \, f \, \myprod \mymatrix{m}{\tau}{w^{\tau}_{j}} {z^{\tau}}$ is convolution left invariant for $\mymatrix{m}{\sigma}{y^{\sigma}}{y^{\sigma}}$ and convolution right invariant for $\mymatrix{m}{\tau}{z^{\tau}}{z^{\tau}}$.

\smallskip

\item[(ii)] \ $\forall \ f \ \in \Cic (\Gk )$, and $\sigma, \, \tau \, \in \, \Xi$: 
$$
{\mymatrix{m}{\sigma}{v^{\sigma}_{i}}{v^{\sigma}_{i}}} \myprod f \myprod {\mymatrix{m}{\tau}{w^{\tau}_{j}}{w^{\tau}_{j}}} \ = \ {\mymatrix{m}{\sigma}{v^{\sigma}_{i}}{y^{\sigma}}} \myprod F_{y^{\sigma} , z^{\tau}} \myprod {\mymatrix{m}{\tau}{z^{\tau}}{w^{\tau}_{i}}} \ ,
$$
\noindent{where} $F_{y^{\sigma} , z^{\tau}} \, = \, \mymatrix{m}{\sigma}{y^{\sigma}}{v^{\sigma}_{i}} \, \myprod \, f \, \myprod \mymatrix{m}{\tau}{w^{\tau}_{j}} {z^{\tau}}$ belongs to ${\CalH}_{F}$.
\end{itemize}
\end{lemma}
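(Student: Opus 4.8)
The plan is to deduce both statements from the convolution relations \eqref{orthogonality-relations}, the normalization \eqref{normalization}, and the invariance properties of matrix coefficients recorded just before the lemma; once these are in hand the argument is pure bookkeeping. A preliminary observation I would record is that every matrix coefficient $\mymatrix{m}{\kappa}{u}{v}$ is supported in $\Gk_{E}$, so although $f \in \Cic(\Gk)$, whenever two such coefficients are convolved the defining integral collapses to $\Gk_{E}$ and Corollary \ref{c:conv} applies verbatim with $K = \Gk_{E}$ (the Haar measure on $\Gk$ restricts to a Haar measure on the open compact $\Gk_{E}$, so all factors of $\meas(\Gk_{E})$ are consistent). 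I would also note the scalar simplification: for a vector normalized by \eqref{normalization}, say $y^{\sigma}$, the factor $\tfrac{\meas(\Gk_{E})}{\deg(\sigma)}\,\overline{\langle y^{\sigma}, y^{\sigma}\rangle}$ appearing in \eqref{orthogonality-relations} equals $1$, so the idempotent $\mymatrix{m}{\sigma}{y^{\sigma}}{y^{\sigma}}$ acts as an identity against any coefficient whose relevant slot is $y^{\sigma}$.

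For statement (i), I would convolve on the left by $\mymatrix{m}{\sigma}{y^{\sigma}}{y^{\sigma}}$. By associativity this only touches the leftmost factor, and \eqref{orthogonality-relations} together with the scalar simplification gives $\mymatrix{m}{\sigma}{y^{\sigma}}{y^{\sigma}}\myprod\mymatrix{m}{\sigma}{y^{\sigma}}{v^{\sigma}_{i}} = \mymatrix{m}{\sigma}{y^{\sigma}}{v^{\sigma}_{i}}$, so the whole function is unchanged. The right-hand invariance is symmetric: convolving on the right by $\mymatrix{m}{\tau}{z^{\tau}}{z^{\tau}}$ only touches the rightmost factor, and $\mymatrix{m}{\tau}{w^{\tau}_{j}}{z^{\tau}}\myprod\mymatrix{m}{\tau}{z^{\tau}}{z^{\tau}} = \mymatrix{m}{\tau}{w^{\tau}_{j}}{z^{\tau}}$ by the same computation.

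For statement (ii), I would substitute the definition of $F_{y^{\sigma},z^{\tau}}$ into the right-hand side, regroup by associativity, and collapse the two outer pairs with \eqref{orthogonality-relations}: the left pair gives $\mymatrix{m}{\sigma}{v^{\sigma}_{i}}{y^{\sigma}}\myprod\mymatrix{m}{\sigma}{y^{\sigma}}{v^{\sigma}_{i}} = \mymatrix{m}{\sigma}{v^{\sigma}_{i}}{v^{\sigma}_{i}}$, and the right pair gives $\mymatrix{m}{\tau}{w^{\tau}_{j}}{z^{\tau}}\myprod\mymatrix{m}{\tau}{z^{\tau}}{w^{\tau}_{j}} = \mymatrix{m}{\tau}{w^{\tau}_{j}}{w^{\tau}_{j}}$, each scalar being $1$ by normalization; what remains is exactly the left-hand side. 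To see $F_{y^{\sigma},z^{\tau}}\in{\CalH}_{F}$, I would invoke the invariance remark: $\mymatrix{m}{\sigma}{y^{\sigma}}{v^{\sigma}_{i}}$ has the $\Gk_{F}$-fixed vector $y^{\sigma}$ in its first slot, hence is left $\Gk_{F}$-invariant, so $F_{y^{\sigma},z^{\tau}}$ is left $\Gk_{F}$-invariant; likewise $\mymatrix{m}{\tau}{w^{\tau}_{j}}{z^{\tau}}$ has $z^{\tau}$ in its second slot, hence is right $\Gk_{F}$-invariant, so $F_{y^{\sigma},z^{\tau}}$ is right $\Gk_{F}$-invariant. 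A compactly supported $\Gk_{F}$-bi-invariant function lies in ${\CalH}_{F} = e_{F}\myprod\Cic(\Gk)\myprod e_{F}$, as required.

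There is essentially no hard step; the only points needing care are the reduction of the $\Gk$-convolution to the $\Gk_{E}$-integral (so that the Schur relations may be applied inside a larger ambient group) and the bookkeeping of slots. I would also flag that, for the identity in (ii) to hold, the final factor on the right-hand side must be $\mymatrix{m}{\tau}{z^{\tau}}{w^{\tau}_{j}}$, matching the index $j$ on the left; the displayed subscript $i$ appears to be a typographical slip, since a mismatched index would produce $\mymatrix{m}{\tau}{w^{\tau}_{j}}{w^{\tau}_{i}}$ in place of $\mymatrix{m}{\tau}{w^{\tau}_{j}}{w^{\tau}_{j}}$.
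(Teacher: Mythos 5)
Your proof is correct and is essentially the paper's intended argument: the paper's own proof of this lemma is literally the single word ``Clear,'' and your verification via the Schur orthogonality relations \eqref{orthogonality-relations} with the normalization \eqref{normalization}, the reduction of the $\Gk$-convolutions to $\Gk_{E}$-integrals, and the invariance remarks preceding the lemma is exactly the routine computation being suppressed. You are also right that the subscript in $\mymatrix{m}{\tau}{z^{\tau}}{w^{\tau}_{i}}$ is a typographical slip for $\mymatrix{m}{\tau}{z^{\tau}}{w^{\tau}_{j}}$.
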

\smallskip
\begin{proof} \quad Clear.
\end{proof}

\medskip

\noindent{Remarks.} \ 
\begin{itemize} 
\item[$\bullet$] \ A consequence of statement (ii) is that 
$$
{\CalH}_{E} \ = \ ( \ e_{E} \myprod \Cic (\Gk_{E}) \myprod e_{F} \ ) \ \myprod \ {\CalH}_{F} \ \myprod \ ( \ e_{F} \myprod \Cic (\Gk_{E}) \myprod e_{E} \ ) \ .
$$
\item[$\bullet$] \ In statement (ii), if we replace the collection of (normalized) $\Gk_{F}$-invariant vectors $\{ y^{\kappa} \}$ and $\{ z^{\kappa} \}$ by $\{ y^{\kappa}_{\dagger} \}$ and $\{ z^{\kappa}_{\dagger} \}$, then the two $\Gk_{F}$-bi-invariant functions $F_{y^{\sigma} , z^{\tau}}$ and $F_{y^{\sigma}_{\dagger} , z^{\tau}_{\dagger}}$ are related by
\begin{equation}
F_{y^{\sigma}_{\dagger} , z^{\tau}_{\dagger}} \ = \ \mymatrix{m}{\sigma}{y^{\sigma}_{\dagger}}{y^{\sigma}} \ {\myprod} \ F_{y^{\sigma} , z^{\tau}} \ {\myprod} \ \mymatrix{m}{\sigma}{z^{\tau}}{z^{\tau}_{\dagger}} \ .
\end{equation}
\end{itemize}

\smallskip

Assume we are in the situation of Lemma \ref{submodule0}.  Then, any $f \in {\CalH}_{E}$, is decomposed as in \eqref{dsum1}; thus, 
$$
f \ = {\underset {\sigma \, \in \, \Xi} \sum} \, \sum^{\deg (\sigma )}_{i=1} {\underset {\tau \, \in \, \Xi} \sum} \, \sum^{\deg (\tau )}_{j=1} 
\ \mymatrix{m}{\sigma}{v^{\sigma}_i}{y^{\sigma}} \ {\myprod} \ \big( \ \mymatrix{m}{\sigma}{y^{\sigma}}{v^{\sigma}_i} \ {\myprod} \ f \ {\myprod} \ \mymatrix{m}{\tau}{v^{\tau}_j}{z^{\tau}} \ \big) \  {\myprod} \ \mymatrix{m}{\tau}{z^{\tau}}{v^{\tau}_j} \
$$
\noindent{and} each function {\,}$\big( \, \mymatrix{m}{\sigma}{y^{\sigma}}{v^{\sigma}_i} \ {\myprod} \ f \ {\myprod} \ \mymatrix{m}{\tau}{v^{\tau}_j}{z^{\tau}} \, \big)${\,} is $\Gk_{F}$-bi-invariant.  Another choice $\{ \, y^{\sigma}_{\dagger} , \, z^{\sigma}_{\dagger}  \, \in \, V^{\Gk_{F}}_{\sigma} \ | \ \sigma \in \Xi \, \}$ yields 
$$
f \ = {\underset {\sigma \, \in \, \Xi} \sum} \, \sum^{\deg (\sigma )}_{i=1} {\underset {\tau \, \in \, \Xi} \sum} \, \sum^{\deg (\tau )}_{j=1} 
\ \mymatrix{m}{\sigma}{v^{\sigma}_i}{y^{\sigma}_{\dagger}} \ {\myprod} \ \big( \ \mymatrix{m}{\sigma}{y^{\sigma}_{\dagger}}{v^{\sigma}_i} \ {\myprod} \ f \ {\myprod} \ \mymatrix{m}{\tau}{v^{\tau}_j}{z^{\tau}_{\dagger}} \ \big) \  {\myprod} \ \mymatrix{m}{\tau}{z^{\tau}_{\dagger}}{v^{\tau}_j} \
$$

\noindent{We} can combine these two expressions for $f$ with:
\begin{itemize}
\item[(i)] \ The assumption $\circ$ is an anti-involution of the Iwahori-Hecke algebra ${\CalH}_{F}$.
\smallskip
\item[(ii)] \ On the functions $e_{F} \, {\myprod} \, C(\Gk_{E}) \, {\myprod} \, e_{F}$, the maps $\circ$ and $\staralg$ (of \eqref{cstar}) are equal.
\smallskip
\item[(iii)] \ For any $\kappa \in \Xi$ and $a, b \in V_{\kappa}$, \ $(\mymatrix{m}{\kappa}{a}{b})^{\staralg} \ = \ \mymatrix{m}{\kappa}{b}{a}$.

\begin{comment}
\item[(iv)] $\big( \ \mymatrix{m}{\sigma}{u^{\sigma}}{v^{\sigma}_i} \ {\myprod} \ f \ {\myprod} \ \mymatrix{m}{\tau}{v^{\tau}_j}{u^{\tau}} \ \big) \ = \ \mymatrix{m}{\sigma}{u^{\sigma}}{u^{\sigma}_{\dagger}} \ {\myprod} \ \big( \ \mymatrix{m}{\sigma}{u^{\sigma}_{\dagger}}{v^{\sigma}_i} \ {\myprod} \ f \ {\myprod} \ \mymatrix{m}{\tau}{v^{\tau}_j}{u^{\tau}_{\dagger}} \ \big) \ {\myprod} \ \mymatrix{m}{\tau}{u^{\tau}_{\dagger}}{u^{\tau}}$, 
and 
$\mymatrix{m}{\sigma}{u^{\sigma}_{\dagger}}{v^{\sigma}_i} , \, \mymatrix{m}{\tau}{u^{\tau}_{\dagger}}{u^{\tau}} \, \in \, e_{F} \, {\myprod} \, C(\Gk_{E}) \, {\myprod} \, e_{F}$ \ .
\end{comment}

\end{itemize}

\smallskip

\noindent{We} deduce that  the linear map
\begin{equation}\label{heckeinvolution}
\aligned
f \ &= {\underset {\sigma \, \in \, \Xi} \sum} \, \sum^{\deg (\sigma )}_{i=1} {\underset {\tau \, \in \, \Xi} \sum} \, \sum^{\deg (\tau )}_{j=1} 
\ \mymatrix{m}{\sigma}{v^{\sigma}_i}{u^{\sigma}} \ {\myprod} \ \big( \ \mymatrix{m}{\sigma}{u^{\sigma}}{v^{\sigma}_i} \ {\myprod} \ f \ {\myprod} \ \mymatrix{m}{\tau}{v^{\tau}_j}{u^{\tau}} \ \big) \  {\myprod} \ \mymatrix{m}{\tau}{u^{\tau}}{v^{\tau}_j} \\
& \qquad {\overset {{\ }_{\circ}{\ }} {\xrightarrow{\hskip 0.30in}} } \quad f^{\bullet} \ := {\underset {\sigma \, \in \, \Xi} \sum} \, \sum^{\deg (\sigma )}_{i=1} {\underset {\tau \, \in \, \Xi} \sum} \, \sum^{\deg (\tau )}_{j=1} 
\ 
\mymatrix{m}{\tau}{v^{\tau}_j}{u^{\tau}} \ {\myprod} \ \big( \ \mymatrix{m}{\sigma}{u^{\sigma}}{v^{\sigma}_i} \ {\myprod} \ f \ {\myprod} \ \mymatrix{m}{\tau}{v^{\tau}_j}{u^{\tau}} \ \big)^{\circ} \  {\myprod} \ \mymatrix{m}{\sigma}{u^{\sigma}}{v^{\tau}_i} \\
&{\hskip 1.135in} = {\underset {\sigma \, \in \, \Xi} \sum} \, \sum^{\deg (\sigma )}_{i=1} {\underset {\tau \, \in \, \Xi} \sum} \, \sum^{\deg (\tau )}_{j=1} 
\ 
\mymatrix{m}{\tau}{v^{\tau}_j}{u^{\tau}} \ {\myprod} \ \big( \ \mymatrix{m}{\tau}{u^{\tau}}{v^{\tau}_j} \ {\myprod} \ f^{\circ} \ {\myprod} \ \mymatrix{m}{\sigma}{v^{\sigma}_i}{u^{\sigma}} \ \big) \  {\myprod} \ \mymatrix{m}{\sigma}{u^{\sigma}}{v^{\tau}_i} \\
\endaligned
\end{equation}
\noindent{on} ${\CalH}_{E}$ is well-defined.  

%\eject

\begin{prop}\label{involution-prop} \ The linear map {\,}$\circ${\,} \eqref{heckeinvolution} of ${\CalH}_{E}$ is an algebra anti-involution.
\end{prop}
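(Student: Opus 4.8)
The plan is to coordinatize $\CalH_{E}$ as a (generalized) matrix algebra over $\CalH_{F}$ built from the matrix coefficients of Corollary \ref{c:conv}, and then to observe that the map \eqref{heckeinvolution} is nothing but ``transpose followed by entrywise $\circ$'' for this matrix structure; the two anti-involution axioms then descend from the corresponding properties of $\circ$ on $\CalH_{F}$. For each $\sigma \in \Xi$ I fix the $\Gk_{F}$-fixed vector $u^{\sigma}$ and the orthogonal basis $\{v^{\sigma}_{i}\}$ normalized by \eqref{normalization}, and set
$$
\alpha_{\sigma,i} := \mymatrix{m}{\sigma}{v^{\sigma}_{i}}{u^{\sigma}}, \qquad \beta_{\sigma,i} := \mymatrix{m}{\sigma}{u^{\sigma}}{v^{\sigma}_{i}} = (\alpha_{\sigma,i})^{\staralg}, \qquad q_{\sigma} := \mymatrix{m}{\sigma}{u^{\sigma}}{u^{\sigma}}.
$$
For $h \in \CalH_{E}$ I define its entries $\Phi^{h}_{\sigma i,\tau j} := \beta_{\sigma,i}\myprod h\myprod\alpha_{\tau,j}$. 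The reconstruction identity displayed just before \eqref{heckeinvolution} then reads $h = \sum_{\sigma,i,\tau,j}\alpha_{\sigma,i}\myprod\Phi^{h}_{\sigma i,\tau j}\myprod\beta_{\tau,j}$, and by the directness in Proposition \ref{heckedecomp} the assignment $h\mapsto(\Phi^{h}_{\sigma i,\tau j})$ is a linear bijection of $\CalH_{E}$ onto $\bigoplus_{\sigma,i,\tau,j}\ q_{\sigma}\myprod\CalH_{F}\myprod q_{\tau}$, where each $q_{\sigma}$ is an idempotent with $q_{\sigma}\myprod e_{F} = q_{\sigma}$.

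First I would record, straight from Corollary \ref{c:conv} and \eqref{normalization}, the matrix-unit relations
$$
\alpha_{\sigma,i}\myprod\beta_{\sigma,i} = \mymatrix{m}{\sigma}{v^{\sigma}_{i}}{v^{\sigma}_{i}}, \qquad {\textstyle\sum_{\sigma,i}}\,\alpha_{\sigma,i}\myprod\beta_{\sigma,i} = e_{E}, \qquad \beta_{\sigma,i}\myprod\alpha_{\tau,j} = \delta_{\sigma\tau}\,\delta_{ij}\,q_{\sigma},
$$
together with $q_{\sigma}^{\staralg} = q_{\sigma}$ and $\Phi^{h}_{\sigma i,\tau j} \in q_{\sigma}\myprod\CalH_{F}\myprod q_{\tau}$. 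Inserting the resolution $e_{E} = \sum_{\kappa,k}\alpha_{\kappa,k}\myprod\beta_{\kappa,k}$ between the two factors of a product $f\myprod g$ yields the matrix-multiplication rule $\Phi^{f\myprod g}_{\sigma i,\tau j} = \sum_{\kappa,k}\Phi^{f}_{\sigma i,\kappa k}\myprod\Phi^{g}_{\kappa k,\tau j}$, so $h\mapsto(\Phi^{h})$ is an algebra isomorphism onto the generalized matrix algebra. Next, reading off the outer factors of \eqref{heckeinvolution} — the left factor is $\alpha_{\tau,j}$, the right is $\beta_{\sigma,i}$, and the middle entry is $(\Phi^{f}_{\sigma i,\tau j})^{\circ}$ — and then sandwiching by $\beta$ and $\alpha$ via the relations above, I obtain the compact description
$$
\Phi^{f^{\bullet}}_{\sigma i,\tau j} \ = \ \big(\Phi^{f}_{\tau j,\sigma i}\big)^{\circ}.
$$
Here I use $\circ = \staralg$ on $\CalH_{F}$ (assumption \eqref{circ}), that $\circ$ carries $q_{\sigma}\myprod\CalH_{F}\myprod q_{\tau}$ to $q_{\tau}\myprod\CalH_{F}\myprod q_{\sigma}$ (matching the transposed position), and that well-definedness — independence of the auxiliary choices $u^{\sigma}$ — is exactly what the discussion preceding the Proposition supplies.

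With these two formulas the Proposition is immediate; additivity of $\bullet$ is clear from \eqref{heckeinvolution}. Applying the description twice and using that $\circ$ is an involution on $\CalH_{F}$ gives $\Phi^{(f^{\bullet})^{\bullet}}_{\sigma i,\tau j} = \big((\Phi^{f}_{\sigma i,\tau j})^{\circ}\big)^{\circ} = \Phi^{f}_{\sigma i,\tau j}$, hence $(f^{\bullet})^{\bullet} = f$. Combining the matrix-multiplication rule with the fact that $\circ$ is an anti-homomorphism of $\CalH_{F}$ gives
$$
\Phi^{(f\myprod g)^{\bullet}}_{\sigma i,\tau j} = \big(\Phi^{f\myprod g}_{\tau j,\sigma i}\big)^{\circ} = {\textstyle\sum_{\kappa,k}}\big(\Phi^{g}_{\kappa k,\sigma i}\big)^{\circ}\myprod\big(\Phi^{f}_{\tau j,\kappa k}\big)^{\circ} = {\textstyle\sum_{\kappa,k}}\Phi^{g^{\bullet}}_{\sigma i,\kappa k}\myprod\Phi^{f^{\bullet}}_{\kappa k,\tau j} = \Phi^{g^{\bullet}\myprod f^{\bullet}}_{\sigma i,\tau j},
$$
so that $(f\myprod g)^{\bullet} = g^{\bullet}\myprod f^{\bullet}$ because the coordinates determine the element.

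The hard part will be bookkeeping rather than ideas: I must keep the two idempotent families apart — the $\mymatrix{m}{\sigma}{v^{\sigma}_{i}}{v^{\sigma}_{i}}$ summing to $e_{E}$ inside $\CalH_{E}$ versus the subordinate $q_{\sigma} = \mymatrix{m}{\sigma}{u^{\sigma}}{u^{\sigma}}$ with $q_{\sigma}\myprod e_{F} = q_{\sigma}$ — and confirm that every entry $\Phi^{h}_{\sigma i,\tau j}$ really lands in the corner $q_{\sigma}\myprod\CalH_{F}\myprod q_{\tau}$, so that the stray factor $q_{\kappa}$ produced by $\beta_{\kappa,k}\myprod\alpha_{\kappa,k}$ is harmlessly absorbed and $\circ$ respects the transposed corner. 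Once the orthogonality relations of Corollary \ref{c:conv} are pinned down with the normalization \eqref{normalization}, both axioms reduce mechanically to the involution and anti-homomorphism properties of $\circ$ on $\CalH_{F}$.
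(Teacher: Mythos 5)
Your proof is correct, and while it rests on exactly the same ingredients as the paper's (Corollary \ref{c:conv}, the decomposition of Proposition \ref{heckedecomp}, and assumption \eqref{circ}), it takes a genuinely different route in how those ingredients are organized. The paper argues by brute force: it expands $f$ and $g$ into the components of \eqref{dsum1}, computes the components of $f \myprod g$ by the orthogonality relations, and then pushes $\circ$ through a long chain of factorizations $\mymatrix{m}{\sigma}{v^{\sigma}_{i}}{v^{\sigma}_{i}} = \mymatrix{m}{\sigma}{v^{\sigma}_{i}}{u^{\sigma}} \myprod \mymatrix{m}{\sigma}{u^{\sigma}}{v^{\sigma}_{i}}$, applying the anti-homomorphism property of $\circ$ only to the $\Gk_{F}$-bi-invariant middle pieces, until the expression reassembles as $\big( g \myprod \mymatrix{m}{\lambda}{v^{\lambda}_{s}}{v^{\lambda}_{s}} \big)^{\circ} \myprod \big( \mymatrix{m}{\sigma}{v^{\sigma}_{i}}{v^{\sigma}_{i}} \myprod f \big)^{\circ}$ --- about a page and a half of convolution manipulation. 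You instead promote the same relations into a structural statement: $h \mapsto \big( \beta_{\sigma,i} \myprod h \myprod \alpha_{\tau,j} \big)$ is an algebra isomorphism of ${\CalH}_{E}$ onto a generalized matrix algebra with entries in the corners $q_{\sigma} \myprod {\CalH}_{F} \myprod q_{\tau}$, under which \eqref{heckeinvolution} becomes transpose followed by entrywise $\circ$; both anti-involution axioms then follow formally in two lines from the involutive anti-homomorphism property of $\circ$ on ${\CalH}_{F}$. Your route buys brevity and conceptual clarity (the matrix picture also makes well-definedness, the interchange of ${\myhecke{E}{F}}$ and ${\myhecke{F}{E}}$, and indeed the Morita equivalence transparent), at the modest cost of verifying the bijectivity and multiplicativity of the coordinate map, which you do correctly. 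One small imprecision: where you say you use ``$\circ = \staralg$ on ${\CalH}_{F}$,'' note that \eqref{circ} asserts this only on the finite-dimensional subalgebra $e_{F} \myprod \Cic (\Gk_{E}) \myprod e_{F}$; this suffices for your argument, since the only place it is needed is $q_{\sigma}^{\circ} = q_{\sigma}^{\staralg} = q_{\sigma}$, and $q_{\sigma}$ is supported on $\Gk_{E}$.
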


\noindent{We} note:
\begin{itemize} 
\item[(i)] \ If $\circ$ is the $\staralg$ anti-involution of ${\CalH}_{F}$, then the extension $\circ$ to  ${\CalH}_{F}$ is the  $\staralg$ anti-involution.
\item[(ii)] \ Since the {\,}$\bullet${\,} anti-involution satisfies \eqref{circ}, it has an extension to ${\CalH}_{E}$.
\end{itemize}

\begin{proof}  \   For each $\sigma \in \Xi$, we fix an orthogonal basis $\{ v^{\sigma}_{i}$ of $V_{\sigma}$ and a vector $u^{\sigma} \in V^{\Gk_{F}}_{sigma}$.  We assume the vectors are normalized as in \eqref{normalization}.  Suppose $f , \, g \, \in {\CalH}_{E}$.  Expand them  as 
$$
f \ = \ {\underset {\sigma \, , \, \tau} \sum} \ {\sum^{\deg (\sigma )}_{i=1}} \ {\sum^{\deg (\tau )}_{j=1}}  \mymatrix{m}{\sigma}{v^{\sigma}_{i}}{v^{\sigma}_{i}} \, {\myprod} \, f \, {\myprod} \, \mymatrix{m}{\tau}{v^{\tau}_{j}}{v^{\tau}_{j}}
\quad {\text{\rm{and}}} \quad g \ = \ {\underset {\kappa \, , \, \lambda} \sum} \ {\sum^{\deg (\kappa )}_{r=1}} \ {\sum^{\deg (\lambda )}_{s=1}}  \mymatrix{m}{\kappa}{v^{\kappa}_{r}}{v^{\kappa}_{r}} \, {\myprod} \, g \, {\myprod} \, \mymatrix{m}{\lambda}{v^{\lambda}_{s}}{v^{\lambda}_{s}} \ .
$$

\eject 

\noindent{By} the orthogonality relations:

$$
\aligned
\big( \mymatrix{m}{\sigma}{v^{\sigma}_{i}}{v^{\sigma}_{i}} \, {\myprod} \, f \, {\myprod} \, \mymatrix{m}{\tau}{v^{\tau}_{j}}{v^{\tau}_{j}} \big) \ &\myprod \ \big( \mymatrix{m}{\kappa}{v^{\kappa}_{r}}{v^{\kappa}_{r}} \, {\myprod} \, g \, {\myprod} \, \mymatrix{m}{\lambda}{v^{\lambda}_{s}}{v^{\lambda}_{s}} \big) \\
&\ \\
&= \ 
\begin{cases}
\ 0 &{\text{\rm{unless {\,}$\tau = \kappa${\,} and $j=r$}}} \\
\ &\  \\
\ {\underset {\tau} \sum} \ {{\overset {\deg (\tau )} {\underset {j=1} \sum}}} \ \mymatrix{m}{\sigma}{v^{\sigma}_{i}}{v^{\sigma}_{i}} \, {\myprod} \, f \, {\myprod} \, \mymatrix{m}{\tau}{v^{\tau}_{j}}{v^{\tau}_{j}} {\myprod} \, g \, {\myprod} \, \mymatrix{m}{\lambda}{v^{\lambda}_{s}}{v^{\lambda}_{s}}
&{\text{\rm{when {\,}$\tau = \kappa${\,} and $j=r$}}} \\
\end{cases}
\endaligned
$$

\medskip

\noindent{By} \eqref{dsum1}, this must be $\mymatrix{m}{\sigma}{v^{\sigma}_{i}}{v^{\sigma}_{i}} \, {\myprod} \ f \ {\myprod} \, g \ {\myprod} \, \, \mymatrix{m}{\lambda}{v^{\lambda}_{s}}{v^{\lambda}_{s}}$.  From this, we use $\mymatrix{m}{\sigma}{v^{\sigma}_{i}}{v^{\sigma}_{i}} =  \mymatrix{m}{\sigma}{v^{\sigma}_{i}}{u^{\sigma}} \mymatrix{m}{\sigma}{u^{\sigma}}{v^{\sigma}_{i}}$, $\mymatrix{m}{\tau}{v^{\tau}_{j}}{v^{\tau}_{j}} =  \mymatrix{m}{\tau}{v^{\tau}_{j}}{u^{\tau}} \mymatrix{m}{\tau}{u^{\tau}}{v^{\tau}_{j}}$, and $\mymatrix{m}{\lambda}{v^{\lambda}_{s}}{v^{\lambda}_{s}} =  \mymatrix{m}{\lambda}{v^{\lambda}_{s}}{u^{\lambda}} \mymatrix{m}{\lambda}{u^{\lambda}}{v^{\lambda}_{s}}$ to compute

\smallskip

$$
\aligned
\big( \ &\mymatrix{m}{\sigma}{v^{\sigma}_{i}}{v^{\sigma}_{i}} \ {\myprod} \ f \ {\myprod} \, g \ {\myprod} \ \mymatrix{m}{\lambda}{v^{\lambda}_{s}}{v^{\lambda}_{s}} \ \ \big)^{\circ} \ = \  \Big( {\underset {\tau} \sum} \ {\sum^{\deg (\tau )}_{j=1}}
\mymatrix{m}{\sigma}{v^{\sigma}_{i}}{v^{\sigma}_{i}} \, {\myprod} \, f \, {\myprod} \, \mymatrix{m}{\tau}{v^{\tau}_{j}}{v^{\tau}_{j}} {\myprod} \, g \, {\myprod} \, \mymatrix{m}{\lambda}{v^{\lambda}_{s}}{v^{\lambda}_{s}} \Big)^{\circ} \\
&= \  \Big( {\underset {\tau} \sum} \ {\sum^{\deg (\tau )}_{j=1}}
\mymatrix{m}{\sigma}{v^{\sigma}_{i}}{u^{\sigma}} \, \myprod \, \mymatrix{m}{\sigma}{u^{\sigma}}{v^{\sigma}_{i}} \, {\myprod} \, f \, {\myprod} \, \mymatrix{m}{\tau}{v^{\tau}_{j}}{v^{\tau}_{j}} {\myprod} \, g \, {\myprod} \, \mymatrix{m}{\lambda}{v^{\lambda}_{s}}{u^{\lambda}} \, \myprod \, \mymatrix{m}{\lambda}{u^{\lambda}}{v^{\lambda}_{s}} \Big)^{\circ} \\
&= \  {\underset {\tau} \sum} \ {\sum^{\deg (\tau )}_{j=1}} \ 
\mymatrix{m}{\lambda}{v^{\lambda}_{s}}{u^{\lambda}} \, \myprod \ \big( \mymatrix{m}{\sigma}{u^{\sigma}}{v^{\sigma}_{i}} \, {\myprod} \, f \, {\myprod} \, \mymatrix{m}{\tau}{v^{\tau}_{j}}{v^{\tau}_{j}} {\myprod} \, g \, {\myprod} \mymatrix{m}{\lambda}{v^{\lambda}_{s}}{u^{\lambda}} \big)^{\circ} \, \myprod \,  \mymatrix{m}{\sigma}{u^{\sigma}}{v^{\sigma}_{i}} \\
&= \  {\underset {\tau} \sum} \ {\sum^{\deg (\tau )}_{j=1}} \
\mymatrix{m}{\lambda}{v^{\lambda}_{s}}{u^{\lambda}} \, \myprod \ \big( 
\mymatrix{m}{\sigma}{u^{\sigma}}{v^{\sigma}_{i}} \, {\myprod} \, f \, {\myprod} \,
\mymatrix{m}{\tau}{v^{\tau}_{j}}{u^{\tau}} \, \myprod \, 
\mymatrix{m}{\tau}{u^{\tau}}{v^{\tau}_{j}} \, {\myprod} \, g \, {\myprod} \, 
\mymatrix{m}{\lambda}{v^{\lambda}_{s}}{u^{\lambda}} \big)^{\circ}
 \, \myprod \,  \mymatrix{m}{\sigma}{u^{\sigma}}{v^{\sigma}_{i}} \\
&= \  {\underset {\tau} \sum} \ {\sum^{\deg (\tau )}_{j=1}} \
\mymatrix{m}{\lambda}{v^{\lambda}_{s}}{u^{\lambda}} \, \myprod \ \big( 
(\mymatrix{m}{\tau}{u^{\tau}}{v^{\tau}_{j}} \, {\myprod} \, g \, {\myprod} \, 
\mymatrix{m}{\lambda}{v^{\lambda}_{s}}{u^{\lambda}})^{\circ} 
\, \myprod \, 
(\mymatrix{m}{\sigma}{u^{\sigma}}{v^{\sigma}_{i}} \, {\myprod} \, f \, {\myprod} \,
\mymatrix{m}{\tau}{v^{\tau}_{j}}{u^{\tau}} )^{\circ} \big)
 \, \myprod \,  \mymatrix{m}{\sigma}{u^{\sigma}}{v^{\sigma}_{i}} \\
%%%%%%%% A
&= \  {\underset {\tau} \sum} \ {\sum^{\deg (\tau )}_{j=1}} \
\mymatrix{m}{\lambda}{v^{\lambda}_{s}}{u^{\lambda}} \, \myprod \ \big( 
(\mymatrix{m}{\tau}{u^{\tau}}{v^{\tau}_{j}} \, {\myprod} \, g \, {\myprod} \, 
\mymatrix{m}{\lambda}{v^{\lambda}_{s}}{u^{\lambda}})^{\circ} 
\, \myprod \, \mymatrix{m}{\tau}{u^{\tau}}{u^{\tau}} \, \myprod \, 
(\mymatrix{m}{\sigma}{u^{\sigma}}{v^{\sigma}_{i}} \, {\myprod} \, f \, {\myprod} \,
\mymatrix{m}{\tau}{v^{\tau}_{j}}{u^{\tau}} )^{\circ} \big)
 \, \myprod \,  \mymatrix{m}{\sigma}{u^{\sigma}}{v^{\sigma}_{i}} \\
 %%%%%%%% B
&= \  {\underset {\tau} \sum} \ {\sum^{\deg (\tau )}_{j=1}} \
\mymatrix{m}{\lambda}{v^{\lambda}_{s}}{u^{\lambda}} \, \myprod \ \big( 
(\mymatrix{m}{\tau}{u^{\tau}}{v^{\tau}_{j}} \, {\myprod} \, g \, {\myprod} \, 
\mymatrix{m}{\lambda}{v^{\lambda}_{s}}{u^{\lambda}})^{\circ} 
\, \myprod \, \mymatrix{m}{\tau}{u^{\tau}}{v^{\tau}_{j}} \\
&{\hskip 2.50in} \myprod \, \mymatrix{m}{\tau}{v^{\tau}_{j}}{u^{\tau}} 
\, \myprod \, 
(\mymatrix{m}{\sigma}{u^{\sigma}}{v^{\sigma}_{i}} \, {\myprod} \, f \, {\myprod} \,
\mymatrix{m}{\tau}{v^{\tau}_{j}}{u^{\tau}} )^{\circ} \big)
 \, \myprod \,  \mymatrix{m}{\sigma}{u^{\sigma}}{v^{\sigma}_{i}} \\
%%%%%%%% C
&= \  {\underset {\tau} \sum} \ {\sum^{\deg (\tau )}_{j=1}} \ {\sum^{\deg (\tau )}_{r=1}} \ 
\mymatrix{m}{\lambda}{v^{\lambda}_{s}}{u^{\lambda}} \, \myprod \ \big( 
(\mymatrix{m}{\tau}{u^{\tau}}{v^{\tau}_{r}} \, {\myprod} \, g \, {\myprod} \, 
\mymatrix{m}{\lambda}{v^{\lambda}_{s}}{u^{\lambda}})^{\circ} 
\, \myprod \, \mymatrix{m}{\tau}{u^{\tau}}{v^{\tau}_{r}} \\
&{\hskip 2.50in} \myprod \, \mymatrix{m}{\tau}{v^{\tau}_{j}}{u^{\tau}} 
\, \myprod \, 
(\mymatrix{m}{\sigma}{u^{\sigma}}{v^{\sigma}_{i}} \, {\myprod} \, f \, {\myprod} \,
\mymatrix{m}{\tau}{v^{\tau}_{j}}{u^{\tau}} )^{\circ} \big)
\, \myprod \,  \mymatrix{m}{\sigma}{u^{\sigma}}{v^{\sigma}_{i}} \\
%%%%%%%% D
{\hskip 0.20in}&= \ \big( {\underset {\kappa} \sum} \ {\sum^{\deg (\kappa )}_{r=1}} \  \mymatrix{m}{\lambda}{v^{\lambda}_{s}}{u^{\lambda}} \, \myprod \, 
(\mymatrix{m}{\kappa}{u^{\kappa}}{v^{\kappa}_{r}} \, {\myprod} \, g \, {\myprod} \, 
\mymatrix{m}{\lambda}{v^{\lambda}_{s}}{u^{\lambda}})^{\circ} 
\, \myprod \, \mymatrix{m}{\kappa}{u^{\kappa}}{v^{\kappa}_{r}} \, \big) \, {\hskip 2.90in} \\ 
&{\hskip 2.50in} \myprod \, \big( \, {\underset {\tau} \sum} \ {\sum^{\deg (\tau )}_{j=1}} \ \mymatrix{m}{\tau}{v^{\tau}_{j}}{u^{\tau}} 
\, \myprod \, 
(\mymatrix{m}{\sigma}{u^{\sigma}}{v^{\sigma}_{i}} \, {\myprod} \, f \, {\myprod} \,
\mymatrix{m}{\tau}{v^{\tau}_{j}}{u^{\tau}} )^{\circ} \, \myprod \,  \mymatrix{m}{\sigma}{u^{\sigma}}{v^{\sigma}_{i}} \big) \\
\endaligned
$$

%\eject

$$
\aligned 
%{\hskip 0.50in}
%%%%%%%% E
&= \ \big( \ g \, \myprod \, \mymatrix{m}{\lambda}{v^{\lambda}_{s}}{v^{\lambda}_{s}} \ \big)^{\circ} \ \myprod \ \big( \ \mymatrix{m}{\sigma}{v^{\sigma}_{i}}{v^{\sigma}_{i}} \, {\myprod} \, f \ \big)^{\circ}  \ {\hskip 3.50in} .  \\
\endaligned
$$

%\medskip

\noindent The above is true for any $\mymatrix{m}{\sigma}{v^{\sigma}_{i}}{v^{\sigma}_{i}}$ and $\mymatrix{m}{\lambda}{v^{\lambda}_{s}}{v^{\lambda}_{s}}$.   We conclude $\circ$ is an algebra anti-involution. 
\end{proof}

\bigskip

We note that the $\circ$-involution of ${\CalH}_{E}$ interchanges the two 
subspaces ${\myhecke{E}{F}}$ and ${\myhecke{F}{E}}$.  

\noindent{We} obviously have
$$
\forall \ \ f \, \in \, {\CalH}_{E} \ \ , \ \ a \, \in \, {\myhecke{E}{F}} \ \ , \ \ g \, \in \, {\CalH}_{F} \quad : \quad ( \, f \, \myprod \, a \, \myprod \, g \, )^{\circ} \, = \, g^{\circ} \, \myprod \, a^{\circ} \, \myprod \, f^{\circ} 
$$
\noindent{and} a similar relation when $b \, \in \, {\myhecke{F}{E}}$ instead.  We have:
$$
\forall \ \ a \, \in \, {\myhecke{E}{F}} \ \ , \ \ b \, \in \, {\myhecke{F}{E}} \quad : \quad (a \, \myprod \, b)^{\circ} \ = \ b^{\circ} \, \myprod \, a^{\circ} \quad {\text{\rm{and}}} \quad  (b \, \myprod \, a)^{\circ} \ = \ a^{\circ} \, \myprod \, b^{\circ} \ .
$$

\medskip

%%%%%%%%%%%%%%%%%%%%%%%%%%%%%%%%%%%%%%%%%%%%%%%%%%%%%%%%%%%%%%%%%%%%%%%%%%%%%%%
%%%%%%%%%%%%%%%%%%%%%%%%%%%%%%%%%%%%%%%%%%%%%%%%%%%%%%%%%%%%%%%%%%%%%%%%%%%%%%%%
%%%%%%%%%%%%%%%%%%%%%%%%%%%%%%%%%%%%%%%%%%%%%%%%%%%%%%%%%%%%%%%%%%%%%%%%%%%%%%%
%%%%%%%%%%%%%%%%%%%%%%%%%%%%%%%%%%%%%%%%%%%%%%%%%%%%%%%%%%%%%%%%%%%%%%%%%%%%%%

We prove that the Morita equivalence of Theorem \ref{main0} preserves the $\circ$ hermitian and unitary modules.   We continue in the context that $F$ is a chamber, and a facet $E \subset F$.  

\smallskip

We follow  \cite{RF}.  Suppose ${\mathcal A}$ is a $\hvC^\staralg$-algebra.    We use $\circ$ to denote the involution of a ${\mathcal A}$.  If $a \in {\mathcal A}$, we write $a \ge 0$, if there exists  $x_1, \, \dots \, , x_n \, \in \, {\mathcal A}$ so that $a \, = \, \sum^{n}_{i=1} x^{\circ}_{i} \, x_{i}$.  

\smallskip

In the Morita equivalence of Theorem \ref{main0}, we assume $\circ$ is an anti-involution of ${\CalH}_{F}$ which satsifies \eqref{circ}, so there is an extension of $\circ$ to ${\CalH}_{E}$.  We want to be able to transfer the Hermitian structure of a representation of ${\CalH}_{F}$ to a representation of ${\CalH}_{E}$ and vice-versa.  To effect this, $\myhecke{E}{F}$ must have a ${\CalH}_{F}$-valued form {\,}$( \, , \, )_{{\CalH}_{F}} \, : \, \myhecke{E}{F} \times \myhecke{E}{F} \longrightarrow {\CalH}_{F}$ which is sesquilinear, i.e., so that: 
$$
\aligned
\forall \quad a \, , \, b \ \in \ \myhecke{E}{F} \quad &: \quad ( \, a \, , \, b \, )_{{\CalH}_{F}} \, = \, ( \, ( \, b \, , \, a \, )_{{\CalH}_F} \, )^{\circ}  \\
\forall \ \ r \, \in \, {\CalH}_{E} \ \ , \ \ a \, , \, b \, \in \myhecke{E}{F} \quad &: \quad  ( \, r \, \myprod \, a \, , \, b \, )_{{\CalH}_{F}} \, = \, ( \, a \, , \, r^{\circ} \, \myprod \, b \, )_{{\CalH}_{F}} \ \ . 
\endaligned
$$
\smallskip
\noindent{Granted} the existence of the form $( \, , \, )_{{\CalH}_{F}}$, if $(\pi , V_{\pi}) \in \frcC ({\CalH}_{F})$ has a hermitian form $\langle \, , \, \rangle_{{\CalH}_F}$, then the ${\CalH}_{E}$-module ${\myhecke{E}{F}} \otimes_{{\CalH}_{F}} V_{\pi}$ is hermitian for the form 
$$
\langle \, f \otimes v \, , \, g \otimes w \, \rangle_{{\CalH}_{E}} \ = \ \langle \ \pi ( \, ( f , g )_{{\CalH}_{F}}  ) \ v \, , \, w \, \rangle_{{\CalH}_{F}} \ . 
$$

\noindent{This} plugs into the machinery of \cite{RF}, and it is formal that {\,}$\langle \, , \, \rangle_{{\CalH}_{E}}${\,} is a hermitian form with the appropriate invariance properties.   To go in the other direction, $\myhecke{F}{E}${\,} must have a {\,}${\CalH}_{E}$-valued sesquilinear form 
{\,}$( \, , \, )_{{\CalH}_{E}} \, : \, \myhecke{F}{E} \times \myhecke{F}{E} \longrightarrow {\CalH}_{E}$.{\,}   For our situation, the two forms are 
\begin{equation}\label{forms}
\aligned
\forall \quad a \, , \, b \ \in \ \myhecke{E}{F} \quad &: \quad ( \, a \, , \, b \, )_{{\CalH}_{F}} \, := \, {a^{\circ}} \, \myprod \, b \\
\forall \quad c \, , \, d \ \in \ \myhecke{F}{E} \quad &: \quad ( \, c \, , \, d \, )_{{\CalH}_{E}} \, := \, {c^{\circ}} \, \myprod \, d \\
\endaligned
\end{equation}

%%%%%  \noindent{The} (two) maps $\circ$ are the maps of \eqref{cstarEF} and \eqref{bulletEF}  respectively.

\medskip

To show a unitary module ($V \in \frcC ({{\CalH}_{F}})$) is taken to a unitary module ($({\myhecke{E}{F}} \otimes_{{\CalH}_{F}} V) \, \in \, \frcC ({\CalH}_{E})$), it suffices to show $(a,a)_{{\CalH}_{F}} \ge 0$ for any $a \in {\myhecke{E}{F}}$.  Similarly, if $W \in \frcC ({{\CalH}_{E}})$ is unitary, then a sufficient condition for $({\myhecke{F}{E}} \otimes_{{\CalH}_{E}} W)$ to be unitary is that  $(a,a)_{{\CalH}_{E}} \ge 0$ for all $a \in {\myhecke{F}{E}}$.   We write an element in ${\myhecke{F}{E}}$ (resp.~${\myhecke{E}{F}})$ as $a \, = \, e_{E} \, \myprod \, A \, \myprod \, e_{F}$ (resp.~ $a \, = \, e_{F} \, \myprod \, A \, \myprod \, e_{E}$) with $A \, \in \, {\CalH}_{E}$.  Then,
$$
\aligned
( \, a \, , \, a \, )_{{\CalH}_{F}} \ &= \ ( \, e_{E} \, \myprod \, A \, \myprod \, e_{F} \, , \, e_{E} \, \myprod \, A \, \myprod \, e_{F} \, )_{{\CalH}_{F}} \\
&= \ (  \, e_{E} \, \myprod \, A \, \myprod \, e_{F} \, )^{\circ} \, \myprod \, ( \, e_{E} \, \myprod \, A \, \myprod \, e_{F} \, ) \ = \  \, e_{F} \, \myprod \, A^{\circ} \, \myprod \, e_{E} \, \myprod \, A \, \myprod \, e_{F} \ \ .
\endaligned
$$

\noindent{By} Proposition \ref{p:basis}, there exists $x_1, \, \dots \, , x_r \, \in \, e_{E} \myprod \Cic (\Gk_{E}) \myprod e_{F}$ so that $e_{E} \, = \, \sum^{r}_{i=1} \, x_{i} \, \myprod \, x^{\staralg}_{i}$.  Substitution yields
$$
\aligned
( \, a \, , \, a \, )_{{\CalH}_{F}} \ &= \ e_{F} \, \myprod \, A^{\circ} \, \myprod \, ( \, \sum^{r}_{i=1} x_{i} \, \myprod \, x^{\staralg}_{i} \, ) \, \myprod \, A \, \myprod \, e_{F} \\ 
&= \ \sum^{r}_{i=1} \ ( \, x^{\staralg}_{i} \, \myprod \, A \, \myprod \, e_{F} \, )^{\circ} \ \myprod \  ( \, x^{\staralg}_{i} \, \myprod \, A \, \myprod \, e_{F} \, ) \ \ .
\endaligned
$$

\smallskip

\noindent{So}, \ $( a , a )_{{\CalH}_{F}} \, \ge \, 0$ for all $a \in {\myhecke{E}{F}}$.  \ That $( b , b )_{{\CalH}_{E}} \, \ge \, 0$ for any $b \, \in \, {\myhecke{F}{E}}$ is obvious.  Hence,

\begin{prop}\label{extension-prop} \ Suppose $F$ is a chamber and $E \subset F$ is a facet.  Suppose $\circ$ is an anti-involution of ${\CalH}_{F}$ satisfying \eqref{circ} and $\circ$ is extended to ${\CalH}_{E}$.   Then, the equivalence of categories in Theorem \ref{main0} preserves hermitan and unitarity modules.
\end{prop}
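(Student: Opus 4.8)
The plan is to fit the data already assembled --- the equivalence bimodules $\myhecke{E}{F}$ and $\myhecke{F}{E}$ of Theorem \ref{main0}, the extended anti-involution $\circ$ of Proposition \ref{involution-prop}, and the pairings \eqref{forms} --- into the Rieffel framework for Morita equivalence of $\hvC^\staralg$-algebras \cite{RF}, and then to verify the single nonformal input (positivity) that this framework demands. Everything else is bookkeeping with the module-form axioms.

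First I would check that the pairings in \eqref{forms} are genuine sesquilinear module forms. For $a,b \in \myhecke{E}{F}$ the product $a^\circ \myprod b$ lies in $e_F \myprod \Cic(\Gk) \myprod e_F = \CalH_F$, since $\circ$ interchanges $\myhecke{E}{F}$ and $\myhecke{F}{E}$; the conjugate symmetry $(a,b)_{\CalH_F} = ((b,a)_{\CalH_F})^\circ$ and the adjunction $(r \myprod a, b)_{\CalH_F} = (a, r^\circ \myprod b)_{\CalH_F}$ for $r \in \CalH_E$ then follow immediately from $\circ$ being an anti-involution. The form $( \, , \, )_{\CalH_E}$ on $\myhecke{F}{E}$ is handled symmetrically.

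With these forms in place, the transfer of a $\circ$-hermitian $\CalH_F$-module $(\pi, V_\pi)$ to the $\CalH_E$-module $\myhecke{E}{F} \otimes_{\CalH_F} V_\pi$ via $\langle f \otimes v, g \otimes w \rangle_{\CalH_E} = \langle \pi((f,g)_{\CalH_F}) v, w \rangle$ is well-defined on the balanced tensor product and $\circ$-invariant, both being formal consequences of the module-form axioms; the opposite functor uses $( \, , \, )_{\CalH_E}$. This already yields the hermitian half of the statement in both directions.

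The crux --- and the only nonformal step --- is preserving positive-definiteness, i.e. unitarity. Here it suffices to show $(a,a)_{\CalH_F} \ge 0$ for every $a \in \myhecke{E}{F}$ and $(b,b)_{\CalH_E} \ge 0$ for every $b \in \myhecke{F}{E}$, where $\ge 0$ means a finite sum $\sum z_i^\circ \myprod z_i$. The second is immediate, since $(b,b)_{\CalH_E} = b^\circ \myprod b$ already has that shape. For the first I would write $a = e_E \myprod A \myprod e_F$, so that $(a,a)_{\CalH_F} = e_F \myprod A^\circ \myprod e_E \myprod A \myprod e_F$, and substitute the decomposition $e_E = \sum_i x_i \myprod x_i^\staralg$ furnished by Proposition \ref{p:basis} (whose proof rests on the Schur orthogonality relations of Lemma \ref{l:matrel}); regrouping then exhibits $(a,a)_{\CalH_F}$ as a sum $\sum_i z_i^\circ \myprod z_i$. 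The essential obstacle is precisely this positivity, which is why Proposition \ref{p:basis} was proved beforehand: it is the hypothesis \eqref{circ}, that $\circ$ agrees with $\staralg$ on $e_F \myprod \Cic(\Gk_E) \myprod e_F$, which lets the manifest $\staralg$-positivity of $e_E$ be converted into $\circ$-positivity of the bimodule form. Once positivity holds, Rieffel's machinery delivers that positive-definite forms are carried to positive-definite forms in both directions, completing the proof.
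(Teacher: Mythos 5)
Your proposal is correct and follows essentially the same route as the paper: the paper likewise sets up the Rieffel framework with the pairings \eqref{forms}, treats the hermitian transfer as formal, observes $(b,b)_{\CalH_E} = b^\circ \myprod b \ge 0$ is immediate, and establishes the nontrivial positivity $(a,a)_{\CalH_F} \ge 0$ by writing $a = e_E \myprod A \myprod e_F$ and substituting the decomposition $e_E = \sum_i x_i \myprod x_i^{\staralg}$ from Proposition \ref{p:basis}. Your explicit remark that hypothesis \eqref{circ} is what converts the $\staralg$-positivity of $e_E$ into $\circ$-positivity is a point the paper uses only implicitly, but it is the same argument.
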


\vskip 0.30in

%%%%%%%%%%%%%%%%%%%%%%%%%%%%%%%%%%%%%%%%%%%%%%%%%%%%%%%%%%%%%%%%%%%%%%%%%
%%%%%%%%%%%%%%%%%%%%%%%%%%%%%%%%%%%%%%%%%%%%%%%%%%%%%%%%%%%%%%%%%%%%%%%%%
%%%%%%%%%%%%%%%%%%%%%%%%%%%%%%%%%%%%%%%%%%%%%%%%%%%%%%%%%%%%%%%%%%%%%%%%%

\section{Generalizations}

\smallskip

\subsection{Finite field groups} \quad We consider a finite field $\bF$ (with $q$ elements), and a connected reductive group $\bG$ defined over $\bF$.  Let $G$ be the group of $\bF$-rational points, and let $P=MU$ ($U$ the radical, $M$ a Levi factor) be the $\bF$-rational points of a parabolic subgroup defined over $\bF$.

\smallskip

\begin{thm} \ \ {\text{\rm{\cite{HC}}}} \  Take $G$ and $P=MU$ as above.  Suppose $\sigma$ and $\tau$ are irreducible cuspidal representrations of $M$.  The following are equivalent:

\begin{itemize}\label{fin-field-group-a}
\item[(i)] \ There exists $n \in N_{G}(M)$ so that $\Ad (n) \, \sigma \, = \, \tau$.
\smallskip
\item[(ii)] \ Suppose $(\lambda , V_{\lambda})$ is an irreducible representation of $G$ and $(V_{\lambda})_{U}$ is the $U$-covariants (a representation of $M$).  Then,  $(V_{\lambda})_{U}$ contains $\sigma$ if and only if it contains $\tau$.
\end{itemize}
\end{thm}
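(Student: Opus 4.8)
The plan is to run Harish--Chandra's theory of cuspidal support. Write $R^{G}_{M}$ for Harish--Chandra (parabolic) induction, i.e. $R^{G}_{M}(\sigma) = \myInd_{P}^{G}(\tilde\sigma)$, where $\tilde\sigma$ is $\sigma$ inflated to $P = MU$ through $P \to M$, and write ${}^{*}R^{G}_{M}$ for the adjoint functor of $U$-coinvariants, so that ${}^{*}R^{G}_{M}(\lambda) = (V_{\lambda})_{U}$ as an $M$-module. Since we work over $\bC$ and $U$ is a $p$-group, taking $U$-coinvariants agrees with taking $U$-invariants, and the two functors are biadjoint; in particular Frobenius reciprocity gives $\Hom_{G}(\lambda, R^{G}_{M}\sigma) \cong \Hom_{M}((V_{\lambda})_{U}, \sigma)$. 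The first step is therefore to restate condition (ii) representation-theoretically: $(V_{\lambda})_{U}$ contains $\sigma$ exactly when $\lambda$ is a constituent of $R^{G}_{M}\sigma$, so (ii) is equivalent to the assertion that $R^{G}_{M}\sigma$ and $R^{G}_{M}\tau$ have the same set of irreducible constituents.

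For (i)$\Rightarrow$(ii) I would use that Harish--Chandra induction depends only on the Levi $M$ and not on the ambient parabolic (Howlett--Lehrer), so it is covariant for $G$-conjugation: for $n \in G$ one has $R^{G}_{{}^{n}M}({}^{n}\sigma) \cong R^{G}_{M}(\sigma)$ because conjugation by $n$ is an inner automorphism of $G$. Taking $n \in N_{G}(M)$ with $\Ad(n)\sigma = \tau$ gives $R^{G}_{M}\tau \cong R^{G}_{M}\sigma$, hence identical constituents, which is (ii). In characteristic zero the appeal to independence of parabolic can be sidestepped by comparing $\myInd_{P}^{G}$ for $P$ and its conjugate ${}^{n}P$ directly.

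The substance is (ii)$\Rightarrow$(i), which I would prove by contraposition. Assume $\sigma$ and $\tau$ are not $N_{G}(M)$-conjugate, and choose any irreducible constituent $\lambda$ of $R^{G}_{M}\sigma$ (one exists, and by adjunction it satisfies $\langle (V_{\lambda})_{U}, \sigma\rangle \neq 0$). I claim $\tau$ cannot occur in $(V_{\lambda})_{U}$, already violating (ii). To see this, compute $\langle R^{G}_{M}\sigma, R^{G}_{M}\tau\rangle_{G} = \langle \sigma, {}^{*}R^{G}_{M}R^{G}_{M}\tau\rangle_{M}$ by adjunction and expand ${}^{*}R^{G}_{M}R^{G}_{M}$ by the Mackey formula as a sum over Weyl representatives $w$ of terms $R^{M}_{M \cap {}^{w}M}\bigl({}^{*}R^{{}^{w}M}_{M \cap {}^{w}M}({}^{w}\tau)\bigr)$. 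Since $\tau$ is cuspidal, so is ${}^{w}\tau$ on ${}^{w}M$, and the inner restriction ${}^{*}R^{{}^{w}M}_{M \cap {}^{w}M}({}^{w}\tau)$ vanishes whenever $M \cap {}^{w}M$ is a proper Levi subgroup; this leaves only the $w$ with ${}^{w}M = M$, i.e. $w \in N_{G}(M)$, each contributing ${}^{w}\tau$. Hence $\langle R^{G}_{M}\sigma, R^{G}_{M}\tau\rangle_{G} = \sum_{w} \langle \sigma, {}^{w}\tau\rangle_{M}$, which is nonzero only if ${}^{w}\tau \cong \sigma$ for some $w \in N_{G}(M)$, i.e. $\sigma \sim_{N_{G}(M)}\tau$. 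Under our assumption the pairing is $0$, so $R^{G}_{M}\sigma$ and $R^{G}_{M}\tau$ share no constituent; in particular $\lambda$ is not a constituent of $R^{G}_{M}\tau$, so $\langle (V_{\lambda})_{U}, \tau\rangle = \langle \lambda, R^{G}_{M}\tau\rangle_{G} = 0$, contradicting (ii).

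The main obstacle is precisely the Mackey-formula computation of ${}^{*}R^{G}_{M}R^{G}_{M}$ together with the cuspidal vanishing: making the indexing set of $w$'s precise (representatives of the relevant $M$-$M$ double cosets, or $W$-orbits), checking which intersections $M \cap {}^{w}M$ are proper Levis, and confirming that the surviving terms are indexed exactly by $N_{G}(M)/M$. This is the classical disjointness-of-series input; once it is in place, both implications are short, the conjugation-covariance of $R^{G}_{M}$ handling (i)$\Rightarrow$(ii) and the orthogonality computation handling (ii)$\Rightarrow$(i).
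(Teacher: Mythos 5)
The paper never proves this theorem: it is stated as a quotation of Harish--Chandra \cite{HC} and used as a black box, so there is no internal argument to compare against. Your proof is the classical one underlying that reference (the ``philosophy of cusp forms''), and it is correct in outline: the translation of (ii) into the statement that $R^{G}_{M}\sigma$ and $R^{G}_{M}\tau$ have the same irreducible constituents is justified by the adjunctions you state (valid here because all categories are semisimple over $\bC$ and $U$-invariants coincide with $U$-coinvariants); (i)$\Rightarrow$(ii) follows from conjugation-covariance plus independence of the parabolic; and (ii)$\Rightarrow$(i) follows from the Mackey formula for ${}^{*}R^{G}_{M}R^{G}_{M}$ together with cuspidal vanishing, which yields $\langle R^{G}_{M}\sigma, R^{G}_{M}\tau\rangle_{G}=\sum_{w}\langle\sigma,{}^{w}\tau\rangle_{M}$ with $w$ running over $N_{G}(M)$-classes, hence disjointness of the two induced representations when $\sigma$ and $\tau$ are not $N_{G}(M)$-conjugate.

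Two refinements are worth recording to close the gaps you flag. First, in (i)$\Rightarrow$(ii) the element $n\in N_{G}(M)$ generally does not normalize $U$, so conjugation carries $P$ to a different parabolic ${}^{n}P$ with the same Levi; you correctly note that one must then invoke independence of Harish--Chandra induction from the choice of parabolic. For cuspidal $\sigma$ this is not an additional black box: the same Mackey-plus-cuspidality computation gives $\langle R_{M,P}\sigma,R_{M,Q}\sigma\rangle_{G}=\#\{\,w\in N_{G}(M)/M : {}^{w}\sigma\cong\sigma\,\}$ for \emph{any} two parabolics $P,Q$ with Levi $M$, whence $\langle R_{M,P}\sigma-R_{M,Q}\sigma,\,R_{M,P}\sigma-R_{M,Q}\sigma\rangle_{G}=0$ and the two induced characters coincide; so your argument can be made entirely self-contained on the strength of the Mackey formula alone. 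Second, the identification of the surviving double cosets with $N_{G}(M)$-cosets requires only the observation that ${}^{w}M\subseteq M$ forces ${}^{w}M=M$ by equality of orders; this is the point you defer and it is routine. With these filled in, the proof is complete and matches the approach of the cited source.
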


\medskip

Theorem \ref{fin-field-group-a} gives an equivalence relation on the set ${\mathcal T}$ of irreducible cuspidal representations of $M$.  For such a representation $\tau$, let $\Delta (\tau)$ denote the equivalence class of $\tau$.  Set
$$
X_{MU} \ := \ \{ \ \lambda \ \in \ \widehat{G} \ | \ (V_{\lambda})_{U} \ \ {\text{\rm{contains a cuspidal representation of $M$}}} \ \} \ .
$$
\noindent Then Theorem \ref{fin-field-group-a} also gives an equivalence relation on $X_{MU}$ as:
$$
\lambda_{1} \, , \, \lambda_{2} \, \in \, \widehat{G} \ \ : \ \ \lambda_{1} \ \sim \ \lambda_{2} \ \ {\text{\rm{if $(V_{\lambda_{1}})_{U}$ and $(V_{\lambda_{2}})_{U}$ share an irreducible cuspidal representation of $M$ }}} \ .
$$

\noindent Theorem \ref{fin-field-group-a} obviously provides a natural  bijjection between the equivalence classes of ${\mathcal T}$ and those of $X_{MU}$.

\medskip

We take $\sigma$ to be an irreducible cuspidal representation of $M$, and denote by $\Delta$ its equivalence class $\Delta$ in ${\mathcal T}$, and by $\Xi$ the corresponding equivalence class of representaions of $G$.  We define idempotent elements $e_{\sigma}$, $e_{\Delta}$ and $e_{\Xi}$ in the group algebra  $\bC G$ as follows:

$$
\aligned
e_{\sigma} (g) \ :&= \ \begin{cases} 
&{\frac{1}{\# (MU)}} \ \ \deg (\tau ) \ \Theta_{\sigma}  (m) {\hskip 0.53in}{\text{\rm{if $g = mu \ \in MU$}}}  \\ 
&{\ \ }0 {\hskip 1.86in}{\text{\rm{if $g \notin MU$}}} \  \\
\end{cases} \\
&\ \\
e_{\Delta} (g) \ :&= \ \begin{cases} 
&{\frac{1}{\# (MU)}} \ \ {\underset {\tau \in {\Delta}} \sum} \ \deg (\tau ) \ \Theta_{\tau}  (m) {\hskip 0.27in}{\text{\rm{if $g = mu \ \in MU$}}}  \\ 
&{\ \ }0 {\hskip 1.86in}{\text{\rm{if $g \notin MU$}}} \  \\
\end{cases} \\
&\ \\
e_{\Xi} \ :&= \ {\frac{1}{\# (G)}} \ \ {\underset {\lambda \in \Xi} \sum} \ \deg (\lambda ) \ \Theta_{\lambda}    \\ 
\endaligned
$$

\noindent The element $e_{\Xi}$ is the central idempotent in the group algebra $\bC G$ and for any irreducible representation $(\lambda , V_{\lambda})$ of $G$:  
$$
\lambda ( \, e_{\Xi} \, ) \ = \ \begin{cases}
&{\myId}_{V_{\lambda}} {\hskip 0.55in}{\text{\rm{if $\lambda \in \Xi$}}} \\
&{0}_{V_{\lambda}} {\hskip 0.53in}{\text{\rm{if $\lambda \notin \Xi$}}} \ .
\end{cases}
$$

\noindent The idempotent $e_{\sigma}$ is clearly the product (in any order) of the two idempotents 
$$
e_{a} (g) \ := \ \begin{cases} 
&{\frac{1}{\# (M)}} \ \ \deg (\sigma ) \ \Theta_{\sigma}  (m) {\hskip 0.30in}{\text{\rm{if $g = m \ \in \ M$}}}  \\ 
&{\ \ }0 {\hskip 1.54in}{\text{\rm{if $g \notin \ M$}}} \ , \\
\end{cases}
$$
\noindent and 
$$
e_{U} (g) \ := \ \begin{cases} 
&{\frac{1}{\# (U)}} \ \ {\underset {\tau \in {\Delta}} \sum} \ 1 {\hskip 0.62in}{\text{\rm{if $g \ \in \ U$}}}  \\ 
&{\ \ }0 {\hskip 1.22in}{\text{\rm{if $g \notin U$}}} \ , \\
\end{cases} 
{\hskip 0.68in}
$$

\noindent and similarly for $e_{\Delta}$.  For any irreducible representation $(\lambda, V_{\lambda})$ of $G$, we have  
$$
\lambda (e_{\Delta}) \ = \ \lambda (e_{a}) \circ \lambda (e_{U}) \ , 
$$
\noindent where $\lambda (e_{U})$ projects to the $U$-invariants of $V_{\lambda}$ (which we can identify with the $U$-covariants), and then the action of $\lambda (e_{a})$ on the $U$-invariants is projection to the isotypical component arising from $\sigma$.  Obviously, for any $\lambda \in \widehat{G}$, we have 
$\lambda (e_{\Xi} {\,}\myprod{\,} e_{\sigma}) \ = \ \lambda (e_{\sigma}) \  = \  \lambda (e_{\sigma} {\,}\myprod{\,} e_{\Xi})$.  This means the operator Fourier transforms of the three functions $e_{\Xi} {\,}\myprod{\,} e_{\sigma}$, $e_{\sigma}$ and $e_{\sigma} {\,}\myprod{\,} e_{\Xi}$ are equal.  This means 
\begin{equation}\label{compatibility-a}
e_{\Xi} {\,}\myprod{\,} e_{\sigma} \ = \ e_{\sigma} \ = \ e_{\sigma} {\,}\myprod{\,} e_{\Xi} \ .
\end{equation}

\noindent In a completely analogous way, 

\begin{equation}\label{compatibility-b}
e_{\Xi} {\,}\myprod{\,} e_{\Delta} \ = \ e_{\Delta} \ = \ e_{\Delta} {\,}\myprod{\,} e_{\Xi} \ .
\end{equation}

\noindent Define

\begin{equation}\label{hecke-alg-finite-b}
{\CalH}_{\sigma} := e_{\sigma} \myprod \bC (G) \ {\myprod} \ e_{\sigma}  \ \ , \ \ {\CalH}_{\Delta} := e_{\Delta} \myprod \bC (G) \ {\myprod} \ e_{\Delta} \ \ , \ \ {{\CalH}_{\Xi}} := e_{\Xi} \myprod \bC G \ {\myprod} \ e_{\Xi}  \ \ , \\
\end{equation}

\noindent and 

\begin{equation}\label{hecke-alg-finite-c}
\aligned
{\myhecke{\Xi}{\sigma}} \ :&= \ e_{\Xi} \ {\myprod} \bC G \ {\myprod} \ e_{\sigma} \quad &&{,} \qquad {\myhecke{\sigma}{\Xi}} := e_{\sigma} \ {\myprod} \ \bC G \ {\myprod} \ e_{\Xi} \\
{\myhecke{\Xi}{\Delta}} \ :&= \ e_{\Xi} \ {\myprod} \bC G \ {\myprod} \ e_{\Delta} \quad &&{,} \qquad {\myhecke{\Delta}{\Xi}} := e_{\Delta} \ {\myprod} \ \bC G \ {\myprod} \ e_{\Xi} \\
\endaligned
\end{equation}

\noindent The relations in \eqref{compatibility-a} and \eqref{compatibility-b} mean ${\CalH}_{\sigma}$ and ${\CalH}_{\Delta}$ are subalgebras of ${\CalH}_{\Xi}$.  The proof of Proposition \ref{key-prop} can be easily modified and combined with the referenced results on Morita equivalence to show:

\begin{prop}\label{key-prop-b} \ \ The  ideals  {\,}$\bC G {\,}{\myprod}{\,} e_{\sigma} {\,}{\myprod}{\,}  \bC G$, and {\,}$\bC G {\,}{\myprod}{\,} e_{\Delta} {\,}{\myprod}{\,}  \bC G${\,} of {\,}$\bC G${\,} satisfy:

\smallskip

\begin{itemize}

\item[(i)] \ Each equals ${\CalH}_{\Xi}$, i.e., the idempotents $e_{\sigma}$ and $e_{\Delta}$ are full idempotents of ${\CalH}_{\Xi}$.

\smallskip

\item[(ii.1)] \  ${\CalH}_{\Xi} \ = \ {\myhecke{\Xi}{\sigma}} {\,}\myprod{\,} {\myhecke{\sigma}{\Xi}}$, and \ ${\CalH}_{\sigma} \ = \ {\myhecke{\sigma}{\Xi}} {\,}\myprod{\,} {\myhecke{\Xi}{\sigma}}$.

\smallskip

\item[(ii.2)] \  ${\CalH}_{\Xi} \ = \ {\myhecke{\Xi}{\Delta}} {\,}\myprod{\,} {\myhecke{\Delta}{\Xi}}$, and \ ${\CalH}_{\Delta} \ = \ {\myhecke{\Delta}{\Xi}} {\,}\myprod{\,} {\myhecke{\Xi}{\Delta}}$.

\smallskip

\item[(iii)] \  The algebras ${\CalH}_{\sigma}$ and ${\CalH}_{\Delta}$ are Morita equivalent to ${\CalH}_{\Xi}$. 
\end{itemize}
\end{prop}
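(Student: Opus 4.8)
The plan is to mimic the proof of Proposition~\ref{key-prop} line for line, with $\bC G$ in the role of $\Cic(\Gk_E)$, the central idempotent $e_\Xi$ in the role of $e_E$, and $e_\sigma$ (or $e_\Delta$) in the role of $e_F$. Since $\bC G$ is semisimple, the entire statement reduces to a single spectral identification together with formal idempotent bookkeeping.

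The one non-formal input I would isolate first is: for $\lambda\in\widehat{G}$ with central idempotent $e_\lambda=\tfrac{1}{\#(G)}\deg(\lambda)\Theta_\lambda$, one has $\lambda(e_\sigma)\neq 0\iff\lambda(e_\Delta)\neq 0\iff\lambda\in\Xi$. Here I would use that $\lambda(e_\sigma)=\lambda(e_a)\circ\lambda(e_U)$ is projection onto the $\sigma$-isotypic part of the Jacquet module $(V_\lambda)_U$, and that $\lambda(e_\Delta)$ is projection onto its $\Delta$-isotypic part; Harish-Chandra's Theorem~\ref{fin-field-group-a} then says $(V_\lambda)_U$ meets $\sigma$ exactly when it meets some member of the class $\Delta$, which by the definition of $\Xi$ is precisely the condition $\lambda\in\Xi$. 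I expect this identification to be the only real (if modest) obstacle: everything depends on correctly reading off the support of $e_\sigma$ and $e_\Delta$ on $MU$ as Jacquet-module projectors and then quoting Harish-Chandra.

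Granting this, statement (i) runs exactly as in Proposition~\ref{key-prop}. For irreducible $\lambda$ the factorization $\lambda(h_1\myprod e_\sigma\myprod h_2)=\lambda(h_1)\lambda(e_\sigma)\lambda(h_2)$ shows the ideal $e_\lambda\myprod\bC G\myprod e_\sigma\myprod\bC G\myprod e_\lambda$ is nonzero iff $\lambda(e_\sigma)\neq 0$, in which case, being a nonzero ideal of the simple algebra $e_\lambda\myprod\bC G\myprod e_\lambda$, it fills it out. Summing over $\widehat{G}$ through the Wedderburn decomposition then gives $\bC G\myprod e_\sigma\myprod\bC G=\bigoplus_{\lambda\in\Xi}e_\lambda\myprod\bC G\myprod e_\lambda={\CalH}_\Xi$, and identically $\bC G\myprod e_\Delta\myprod\bC G={\CalH}_\Xi$. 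Sandwiching with $e_\Xi$ and using \eqref{compatibility-a}, \eqref{compatibility-b} converts these into fullness of $e_\sigma$ and $e_\Delta$ inside ${\CalH}_\Xi$, i.e.\ ${\CalH}_\Xi\myprod e_\sigma\myprod{\CalH}_\Xi={\CalH}_\Xi$.

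The remaining items are purely formal. For (ii.1) I would compute ${\myhecke{\Xi}{\sigma}}\myprod{\myhecke{\sigma}{\Xi}}=e_\Xi\myprod\bC G\myprod e_\sigma\myprod\bC G\myprod e_\Xi={\CalH}_\Xi$ from (i) and $e_\sigma\myprod e_\sigma=e_\sigma$, and ${\myhecke{\sigma}{\Xi}}\myprod{\myhecke{\Xi}{\sigma}}=e_\sigma\myprod\bC G\myprod e_\sigma={\CalH}_\sigma$ using the centrality of $e_\Xi$ with $e_\sigma\myprod e_\Xi=e_\sigma$; statement (ii.2) is the same computation with $\Delta$. Finally, since $e_\sigma\myprod{\CalH}_\Xi\myprod e_\sigma={\CalH}_\sigma$ and $e_\Delta\myprod{\CalH}_\Xi\myprod e_\Delta={\CalH}_\Delta$, statement (iii) is immediate from the full-idempotent property by the same Morita machinery invoked in Theorem~\ref{main0} (Chapter~18 \cite{Lam}).
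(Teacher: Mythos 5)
Your proposal is correct and is essentially the paper's intended argument: the paper's proof of Proposition~\ref{key-prop-b} consists precisely of the remark that the proof of Proposition~\ref{key-prop} can be modified to this setting and combined with the cited Morita machinery, and your write-up carries out exactly that modification, correctly isolating the Harish-Chandra equivalence (via $\lambda(e_\sigma)=\lambda(e_a)\circ\lambda(e_U)$ acting as the Jacquet-module projector) as the spectral input replacing the Moy--Prasad results used for Proposition~\ref{idempotent-a}. The remaining idempotent bookkeeping with \eqref{compatibility-a}, \eqref{compatibility-b} and the appeal to full idempotents (Chapter~18 of \cite{Lam}) match the paper's route.
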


\medskip

\subsection{Local field groups} \quad We now consider $k$ a non-archimedean local field with notation as in the introduction.  Suppose $F$ is a facet of the building $\ScptB (\Gk )$, and $F$ is a facet with a subfacet $E$ and $\Gk_{F}$ and $\Gk_{E}$ the corresponding parahoric subgroups (so $\Gk^{+}_{E} \subset \Gk^{+}_{F} \subset \Gk_{F} \subset \Gk_{E}$). Then $G = \Gk_{E}/\Gk^{+}_{E}$ is the $\bF$-rational points of a reductive group defined over $\bF$ and $P = Gk_{F}/\Gk^{+}_{E}$ is a parabolic subgroup.  Let $MU$ be a Levi decompostion of $P$ and let $\sigma$ be an irreducible cuspidal representation of $M$.  Define $\Delta = \Delta (\sigma)$ and $\Xi$ as in the previous section.  The inflation of the idempotent $e_{\sigma}$ of $G$ to $\Gk_{E}$ obviously has support in $\Gk_{F}$.  For convenience of notation we continue to use the notation $e_{\sigma}$ to denote the inflation.  Denote by $e_{F}$ and $e_{E}$ respectively, the inflations of $e_{\Delta}$ and $e_{\Xi}$ to $\Gk_{E}$.  The support of $e_{F}$ is in $\Gk_{F}$.   

\medskip

Define
\begin{equation}\label{hecke-alg-finite-e}
{\CalH}_{\sigma} := e_{\sigma} \myprod \Cic (\Gk ) \ {\myprod} \ e_{\sigma}  \ \ , \ \ {\CalH}_{F} := e_{F} \myprod \Cic (\Gk ) \ {\myprod} \ e_{F} \ \ , \ \ {{\CalH}_{E}} := e_{E} \myprod \Cic \ {\myprod} \ e_{E}  \ \ , \\
\end{equation}

\noindent and 
\begin{equation}\label{hecke-alg-finite-f}
\aligned
{\myhecke{E}{\sigma}} \ :&= \ e_{E} \ {\myprod} \ \Cic (\Gk ) \ {\myprod} \ e_{\sigma} \quad &&{,} \qquad {\myhecke{\sigma}{E}} := e_{\sigma} \ {\myprod} \ \Cic( \Gk ) \ {\myprod} \ e_{E} \\
{\myhecke{E}{\Delta}} \ :&= \ e_{E} \ {\myprod} \ \Cic (\Gk ) \ {\myprod} \ e_{\Delta} \quad &&{,} \qquad {\myhecke{\Delta}{\Xi}} := e_{\Delta} \ {\myprod} \ \Cic ( \Gk ) \ {\myprod} \ e_{E} \\
\endaligned
\end{equation}

In an enitrely analogous fashion to Theorem \ref{main0} we have

\begin{thm}\label{main1}
The idempotents $e_{\sigma}$ and $e_{F}$ are full idempotents of the algebra ${\CalH}_{E}$, and so the algebras ${\CalH}_{\sigma}$ and ${\CalH}_{F}$ are Morita equivalent to  ${\CalH}_{E}$.
\end{thm}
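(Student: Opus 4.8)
The plan is to follow verbatim the strategy of Theorem \ref{main0}: reduce everything to the assertion that $e_{\sigma}$ and $e_{F}$ are full idempotents of ${\CalH}_{E}$, and then invoke the characterization of Morita equivalence by full idempotents (Chapter 18 \cite{Lam}). First I would set up the finite-dimensional model exactly as in the parahoric case. Inflation along $\rho \, : \, \Gk_{E} \rightarrow \Gk_{E}/\Gk^{+}_{E} = G$ identifies the corner ${\CalH}^{\text{\rm{fin}}}_{E} := e_{E} \myprod \Cic (\Gk_{E}) \myprod e_{E}$ with $e_{\Xi} \myprod \bC G \myprod e_{\Xi} = {\CalH}_{\Xi}$, and under this identification the inflated idempotents $e_{\sigma}$ and $e_{F}$ correspond to the $e_{\sigma}$ and $e_{\Delta}$ of the finite-field section. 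Proposition \ref{key-prop-b}(i) then reads
$$
{\CalH}^{\text{\rm{fin}}}_{E} \ = \ {\CalH}^{\text{\rm{fin}}}_{E} \myprod e_{\sigma} \myprod {\CalH}^{\text{\rm{fin}}}_{E} \ = \ {\CalH}^{\text{\rm{fin}}}_{E} \myprod e_{F} \myprod {\CalH}^{\text{\rm{fin}}}_{E} \ ,
$$
i.e., $e_{\sigma}$ and $e_{F}$ are already full in ${\CalH}^{\text{\rm{fin}}}_{E}$.

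Next I would bootstrap from ${\CalH}^{\text{\rm{fin}}}_{E}$ to ${\CalH}_{E}$, repeating the argument of Proposition \ref{hecke-modules}(i). Since ${\CalH}^{\text{\rm{fin}}}_{E} \subset {\CalH}_{E}$ and ${\CalH}^{\text{\rm{fin}}}_{E}$ contains the identity element $e_{E}$ of ${\CalH}_{E}$, one has ${\CalH}_{E} \myprod {\CalH}^{\text{\rm{fin}}}_{E} = {\CalH}_{E} = {\CalH}^{\text{\rm{fin}}}_{E} \myprod {\CalH}_{E}$. Hence, for $X$ equal to either $e_{\sigma}$ or $e_{F}$,
$$
{\CalH}_{E} \myprod X \myprod {\CalH}_{E} \ = \ {\CalH}_{E} \myprod \big( \, {\CalH}^{\text{\rm{fin}}}_{E} \myprod X \myprod {\CalH}^{\text{\rm{fin}}}_{E} \, \big) \myprod {\CalH}_{E} \ \supset \ {\CalH}_{E} \myprod e_{E} \myprod {\CalH}_{E} \ = \ {\CalH}_{E} \ ,
$$
and the reverse inclusion is trivial, so $e_{\sigma}$ and $e_{F}$ are full idempotents of ${\CalH}_{E}$.

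To read off the two Morita equivalences, I would note that the inflated forms of the compatibility relations \eqref{compatibility-a} and \eqref{compatibility-b}, namely $e_{E} \myprod e_{\sigma} = e_{\sigma} = e_{\sigma} \myprod e_{E}$ and $e_{E} \myprod e_{F} = e_{F} = e_{F} \myprod e_{E}$, yield ${\CalH}_{\sigma} = e_{\sigma} \myprod {\CalH}_{E} \myprod e_{\sigma}$ and ${\CalH}_{F} = e_{F} \myprod {\CalH}_{E} \myprod e_{F}$. Thus ${\CalH}_{\sigma}$ and ${\CalH}_{F}$ are precisely the corner algebras cut out by the full idempotents $e_{\sigma}$ and $e_{F}$, and their Morita equivalence with ${\CalH}_{E}$ follows from Chapter 18 \cite{Lam}; if one wants the equivalence realized by explicit bimodules, the spaces ${\myhecke{E}{\sigma}}$, ${\myhecke{\sigma}{E}}$ of \eqref{hecke-alg-finite-f} do the job exactly as in Theorem \ref{main0}(i).

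I do not expect any genuine obstacle: all the representation-theoretic content is already packaged in Proposition \ref{key-prop-b}, and the passage from the finite-dimensional algebra to $\Cic (\Gk)$ is formal. The only point that requires care is the bookkeeping in the first step — verifying that inflation along $\rho$ is an algebra map compatible with convolution, so that $\Cic (\Gk_{E}/\Gk^{+}_{E}) \cong \bC G$ and the three idempotents land in the claimed corner with the claimed products, and that the class $\Xi$ of representations of $G$ used here is the same one governing Proposition \ref{key-prop-b}. Once these identifications are fixed the proof is a transcription of the parahoric case.
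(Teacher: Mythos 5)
Your proposal is correct and is essentially the paper's own argument: the paper gives no separate proof of this theorem, asserting only that it follows ``in an entirely analogous fashion to Theorem \ref{main0},'' and your write-up is precisely that analogy made explicit --- identify ${\CalH}^{\text{\rm{fin}}}_{E}$ with ${\CalH}_{\Xi}$ via inflation, quote Proposition \ref{key-prop-b} for fullness there, bootstrap to ${\CalH}_{E}$ using that $e_{E} \in {\CalH}^{\text{\rm{fin}}}_{E}$ is the identity of ${\CalH}_{E}$, and conclude by the full-idempotent criterion of Chapter 18 of \cite{Lam}. The bookkeeping points you flag (inflation compatible with convolution, $e_{F}$ here meaning the inflation of $e_{\Delta}$, and $\Xi$ being the class from the finite-field section) are exactly the identifications the paper takes for granted.
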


\bigskip

The $\staralg$-anti-involution \ $f^{\staralg}(g) = \overline{f(g^{-1})}$ on $\Cic (\Gk )$ restricts to a $\staralg$-anti-involution on the algebras ${\CalH}_{\sigma}$, ${\CalH}_{F}$ and ${\CalH}_{E}$.   In analogy with Proposition \ref{extension-prop}:

\begin{prop} Suppose $\circ$ is an anti-involution of ${\CalH}_{F}$ which satisfies  \eqref{circ}: \ $\forall \ f \, \in \, e_{F} {\,}\myprod{\,} \Cic (\Gk_{E}) {\,}\myprod{\,} e_{F} \ \ : \ \ f^{\circ} \, = \, f^{\staralg}$.  Then there is an extension of $\circ$ to an anti-involution of ${\CalH}_{E}$, so that Morita equivalence of ${\CalH}_{F}$ and ${\CalH}_{F}$ preserves hermitian and unitary modules.  The same holds if we replace ${\CalH}_{F}$ by ${\CalH}_{\sigma}$.
\end{prop}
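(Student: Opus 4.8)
The plan is to transport the entire matrix-coefficient apparatus of Section 4, together with the proofs of Propositions \ref{p:basis}, \ref{involution-prop} and \ref{extension-prop}, to the present setting; the only genuine change is that the role played there by a $\Gk_{F}$-fixed vector of $V_{\sigma}$ is now played by a vector lying in the image $\lambda(e_{F}) V_{\lambda}$ (respectively $\lambda(e_{\sigma}) V_{\lambda}$). First I would assemble the structural inputs already in hand. By Theorem \ref{main1} the idempotent $e_{F}$ (respectively $e_{\sigma}$) is a full idempotent of ${\CalH}_{E}$, so the Morita equivalence is that of Theorem \ref{main1} (the statement's ``${\CalH}_{F}$ and ${\CalH}_{F}$'' should read ``${\CalH}_{F}$ and ${\CalH}_{E}$''). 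Since $\Gk_{E}$ is compact and $e_{E} = \sum_{\lambda \in \Xi} e_{\lambda}$ is a sum of central idempotents, Lemma \ref{l:matrel}, Corollary \ref{c:conv} and the decomposition Proposition \ref{heckedecomp} apply verbatim to the group $\Gk_{E}$ and the idempotent $e_{E}$, where now $\Xi$ denotes the class of irreducible representations of $\Gk_{E}$ inflated from the representations of $G = \Gk_{E}/\Gk^{+}_{E}$ described in the finite-field subsection.

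The crucial observation is that for every $\lambda \in \Xi$ the space $\lambda(e_{F}) V_{\lambda}$ is nonzero: this is precisely the defining property of $\Xi$ together with Theorem \ref{fin-field-group-a}, and likewise $\lambda(e_{\sigma}) V_{\lambda} \neq 0$. Choosing a vector $w^{\lambda}$ in this space normalized as in \eqref{normalization}, the matrix coefficient $\mymatrix{m}{\lambda}{u_{i}}{w^{\lambda}}$ is right $e_{F}$-invariant, hence lies in $\myhecke{E}{F}$, and the orthogonality relation \eqref{eq:convstar} gives $\mymatrix{m}{\lambda}{u_{i}}{w^{\lambda}} \myprod (\mymatrix{m}{\lambda}{u_{i}}{w^{\lambda}})^{\staralg} = \mymatrix{m}{\lambda}{u_{i}}{u_{i}}$. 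Summing over an orthogonal basis $\{ u_{i} \}$ of $V_{\lambda}$ normalized as in \eqref{normalization} and over $\lambda \in \Xi$ reproduces Proposition \ref{p:basis}, namely $e_{E} = \sum_{j} b_{j} \myprod b_{j}^{\staralg}$ with $b_{j} \in \myhecke{E}{F}$ (and the analogous statement with $e_{\sigma}$ in place of $e_{F}$).

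With this in place I would define the extension of $\circ$ to ${\CalH}_{E}$ by the formula \eqref{heckeinvolution}, using the vectors $w^{\lambda}$ in place of the fixed vectors $u^{\sigma}$ there; the proof of Proposition \ref{involution-prop} that this is an algebra anti-involution goes through unchanged, the one point to note being that each sandwiched function $\mymatrix{m}{\lambda}{w^{\lambda}}{v_{i}} \myprod f \myprod \mymatrix{m}{\mu}{v_{j}}{w^{\mu}}$ is left-and-right $e_{F}$-invariant, hence lies in ${\CalH}_{F}$, where condition \eqref{circ} lets us apply $\circ$ as $\staralg$ on the relevant change-of-vector factors. Finally, I would equip $\myhecke{E}{F}$ and $\myhecke{F}{E}$ with the forms \eqref{forms} and substitute $e_{E} = \sum_{i} x_{i} \myprod x_{i}^{\staralg}$ into $(a,a)_{{\CalH}_{F}}$ exactly as in Proposition \ref{extension-prop}, obtaining $(a,a)_{{\CalH}_{F}} \ge 0$; the formalism of \cite{RF} then yields preservation of hermitian and unitary modules. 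The case of ${\CalH}_{\sigma}$ is identical throughout, with $e_{\sigma}$ and $\lambda(e_{\sigma}) V_{\lambda}$ replacing $e_{F}$ and $\lambda(e_{F}) V_{\lambda}$ and the $e_{\sigma}$-analogue of \eqref{circ} in force.

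The main obstacle is confirming that replacing $\Gk_{F}$-fixed vectors by vectors in $\lambda(e_{F}) V_{\lambda}$ does not disturb the $e_{F}$-invariance bookkeeping underlying the well-definedness of \eqref{heckeinvolution}. Here $F$ is no longer a chamber and $e_{F}$ is no longer projection onto genuine invariants but onto a $\Delta$-isotypic piece of the $U$-covariants, which may be higher-dimensional; so I must check that $w^{\lambda} \in \lambda(e_{F}) V_{\lambda}$ still forces the one-sided $e_{F}$-invariance of the matrix coefficients (placing the middle factors in ${\CalH}_{F}$, where \eqref{circ} is applicable), and that the change-of-vector relation guaranteeing independence of the choices $\{ w^{\lambda} \}$ persists because matrix coefficients with both arguments in $\lambda(e_{F}) V_{\lambda}$ are $e_{F}$-bi-invariant. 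Once this is verified, the remaining computations are formally those of Section 6.
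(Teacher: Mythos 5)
Your proposal is correct and is essentially the paper's own argument: the paper gives no separate proof of this proposition, asserting it only ``in analogy with Proposition \ref{extension-prop},'' and what you write out is precisely that analogy, with the one substantive substitution (replacing $\Gk_{F}$-fixed vectors by normalized vectors in $\lambda(e_{F})V_{\lambda}$, resp.\ $\lambda(e_{\sigma})V_{\lambda}$, whose nonvanishing follows from Theorem \ref{fin-field-group-a}) correctly identified, and the invariance bookkeeping you flag as the main obstacle does indeed go through, since $e_{F}^{\staralg}=e_{F}$ (as $\overline{\Theta_{\tau}(m^{-1})}=\Theta_{\tau}(m)$) implies that right convolution by $e_{F}$ acts on a matrix coefficient through $\lambda(e_{F})$ applied to the second vector, so $\lambda(e_{F})w^{\lambda}=w^{\lambda}$ forces the required one-sided invariance and, for both arguments in $\lambda(e_{F})V_{\lambda}$, the bi-invariance needed for well-definedness. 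You are also right that ``${\CalH}_{F}$ and ${\CalH}_{F}$'' in the statement is a typo for ``${\CalH}_{F}$ and ${\CalH}_{E}$.''
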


\vskip 0.20in

%%%%%%%%%%%%%%%%%%%%%%%%%%%%%%%%%%%%%%%%%%%%%%%%%%%%%%%%%%%%%%%%%%%%%%%%%
%%%%%%%%%%%%%%%%%%%%%%%%%%%%%%%%%%%%%%%%%%%%%%%%%%%%%%%%%%%%%%%%%%%%%%%%%
%%%%%%%%%%%%%%%%%%%%%%%%%%%%%%%%%%%%%%%%%%%%%%%%%%%%%%%%%%%%%%%%%%%%%%%%%

\section{Acknowledgments}

\medskip

Parts of this work was done during a visit of the first author to the HKUST Mathematics Department in Spring 2014, a visit of the authors to the University of Utah Mathematics Department in Summer 2015, and visits of the second author to the Cornell University Mathematics Department in Fall 2017, and the University of Oxford Mathematical Institute in Spring 2018.  The Departments and Institute are thanked for their hospitality.

%%%%%%%%%%%%%%%%%%%%%%%%%%%%%%%%%%%%%%%%%%%%%%%%%%%%%%%%%%%%%%%%%%%%%%%%%%%
%%%%%%%%%%%%%%%%%%%%%%%%%%%%%%%%%%%%%%%%%%%%%%%%%%%%%%%%%%%%%%%%%%%%%%%%%%%
%%%%%%%%%%%%%%%%%%%%%%%%%%%%%%%%%%%%%%%%%%%%%%%%%%%%%%%%%%%%%%%%%%%%%%%%%%%
%%%%%%%%%%%%%%%%%%%%%%%%%%%%%%%%%%%%%%%%%%%%%%%%%%%%%%%%%%%%%%%%%%%%%%%%%%%
%%%%%%%%%%%%%%%%%%%%%%%%%%%%%%%%%%%%%%%%%%%%%%%%%%%%%%%%%%%%%%%%%%%%%%%%%%%

\vskip 0.30in

%%%%%%%%%%%%%%%%%%%%%%%%%%%%%%%%%%%%%%%%%%%%%%%%%%%%%%%%%%%%%%%%%%%%%%%%%
%%%%%%%%%%%%%%%%%%%%%%%%%%%%%%%%%%%%%%%%%%%%%%%%%%%%%%%%%%%%%%%%%%%%%%%%%
%%%%%%%%%%%%%%%%%%%%%%%%%%%%%%%%%%%%%%%%%%%%%%%%%%%%%%%%%%%%%%%%%%%%%%%%%

\ifx\undefined\bysame
\newcommand{\bysame}{\leavevmode\hbox to3em{\hrulefill}\,}
\fi

%%%%%%%%%%%%%%%%%%%%%%%%%%%%%%%%%%%%%%%%%%%%%%%%%%%%%%%%%%%%%%%%%%%%%%%%%
%%%%%%%%%%%%%%%%%%%%%%%%%%%%%%%%%%%%%%%%%%%%%%%%%%%%%%%%%%%%%%%%%%%%%%%%%
%%%%%%%%%%%%%%%%%%%%%%%%%%%%%%%%%%%%%%%%%%%%%%%%%%%%%%%%%%%%%%%%%%%%%%%%%
\vfill
{\small{
{\tsc{Department of Mathematics, Malott Hall, Cornell University, Ithaca, NY 14853--0099, USA, \\
Email:}}{\tt{barbasch{\char'100}math.cornell.edu}}

\vskip 0.10in

{\tsc{Department of Mathematics, The Hong Kong University of Science and Technology, Clear Water Bay Road, Hong Kong, Email:}}{\tt{amoy{\char'100}ust.hk}}

}}

\eject

%%%%%%%%%%%%%%%%%%%%%%%%%%%%%%%%%%%%%%%%%%%%%%%%%%%%%%%%%%%%%%%%%%%%%%%%%
%%%%%%%%%%%%%%%%%%%%%%%%%%%%%%%%%%%%%%%%%%%%%%%%%%%%%%%%%%%%%%%%%%%%%%%%%
%%%%%%%%%%%%%%%%%%%%%%%%%%%%%%%%%%%%%%%%%%%%%%%%%%%%%%%%%%%%%%%%%%%%%%%%%
\vfill \eject
%%%%%%%%%%%%%%%%%%%%%%%%%%%%%%%%%%%%%%%%%%%%%%%%%%%%%%%%%%%%%%%%%%%%%%%%%
%%%%%%%%%%%%%%%%%%%%%%%%%%%%%%%%%%%%%%%%%%%%%%%%%%%%%%%%%%%%%%%%%%%%%%%%%
%%%%%%%%%%%%%%%%%%%%%%%%%%%%%%%%%%%%%%%%%%%%%%%%%%%%%%%%%%%%%%%%%%%%%%%%%
 

\begin{thebibliography}{10}

\bibitem[BC]{BC}
D.~Barbasch and D.~Ciubotaru, {\em Star operations for affine Hecke algebras},
arXiv:1504.04361.

\bibitem[BCM]{BCM}
D.~Barbasch, D.~Ciubotaru and A.~Moy, {\em An Euler-Poincar{\'{e}} formula for depth zero Bernstein projector},
arXiv:1610.01310.


\bibitem[BM1]{BM1}
D.~Barbasch and A.~Moy, {\em A unitarity criterion for p-adic groups},
Inventiones.~Math. 98 (1989), no.~1, 19-38.


\bibitem[BM2]{BM2}
D.~Barbasch and A.~Moy, {\em Reduction to real infinitesimal character in Hecke algebras}, Jour.~Amer.~Math.~Soc.  6 (1993), no.~3, 611-635,


\bibitem[HC]{HC}  Harish-Chandra, {\em Eisenstein series over finite fields}, Functional analysis and related fields (Proc. Conf. M. Stone, Univ. Chicago, Chicago, Ill., 1968), pp. 76--88. Springer, New York, 1970

\bibitem[Lm]{Lam}
T.~Y.~Lam, {\em Lectures on modules and rings},
Graduate Texts in Mathematics, Springer, vol.~189, 1998.


\bibitem[Lz1]{Lz1} G.~Lusztig, {\em Some examples of square integrable representations of semisimple p-adic groups}, Trans. Amer. Math. Soc. 277 (1983), no.~2, 623--653.

\bibitem[Lz2]{Lz2} G.~Lusztig, {\em Affine Hecke algebras and their graded version}, Jour. Amer. Math. Soc. 2 (1989), no.~2, 599–-635.



\bibitem[MP1]{MP1} A.~Moy and G.~Prasad, {\em Unrefined minimal $\SaK $-types for p-adic groups}, Invent.~Math. 116 (1994), no.~1-3, pp. 393--408. 


\bibitem[MP2]{MP2} A.~Moy and G.~Prasad, {\em Jacquet functors and unrefined minimal K-types}, Comment.~Math.~Helv. 71 (1996), no.~1, pp. 98--121.

 
\bibitem[Op]{Op}
E.~Opdam,  {\em Harmonic Analysis for certain representations of  graded Hecke algebras },  Acta.  Math,  179,  (1995), 75-191


\bibitem[R]{RF}
M.~Rieffe, {\em Morita Equivalence for $\hvC^{\staralg}$-algebras and $W^{\staralg}$-algebras}, Jour.~of Pure and Appl.~Algebra, 5 (1974), 51--96.


\end{thebibliography}
\end{document}